\theoremstyle{plain}
\newtheorem{theorem}{Theorem}[section]
\newtheorem*{theorem*}{Theorem}
\newtheorem{proposition}[theorem]{Proposition}
\newtheorem{corollary}[theorem]{Corollary}
\newtheorem{lemma}[theorem]{Lemma}
\newtheorem*{Thm-net equiv add}{Theorem \ref{net equiv add}}
\newtheorem*{Thm-equiv compare small}{Theorem \ref{thm: equiv compare small}}
\newtheorem*{Thm-counting lower bd}{Theorem \ref{counting lower bd}}
\newtheorem*{Thm-add subadd}{Theorem \ref{add subadd}}
\newtheorem*{Thm-super-add}{Theorem \ref{super-add}}
\newtheorem*{Thm-equivupper}{Theorem \ref{equiv-upper}}
\newtheorem*{corollary-thin sum}{Corollary \ref{thin sum}}
\newtheorem*{theorem-factor uniqueness}{Theorem \ref{factor uniqueness}}
\theoremstyle{definition}
\newtheorem{example}[theorem]{Example}
\newtheorem{remark}[theorem]{Remark}
\newtheorem{definition}[theorem]{Definition}
\theoremstyle{definition}
\newcommand{\N}{\mathbb N}
\renewcommand{\H}{\mathbb H}
\newcommand{\nil}{\varnothing}
\newcommand{\wihat}{\widehat}
\newcommand{\defn}[1]{\textbf{#1}}
\newcommand{\boundary}{\partial}
\newcommand{\mc}[1]{\mathcal{#1}}
\newcommand{\ob}[1]{\overline{#1}}
\renewcommand{\b}{\frak{b}}
\renewcommand{\t}{\frak{t}}
\newcommand{\g}{\frak{g}}
\newcommand{\weight}{\omega}
\newcommand{\X}{x_\weight}
\newcommand{\netX}{\operatorname{net}\X}
\newcommand{\netp}{\operatorname{net}\iota}
\newcommand{\cpt}{\sqsubset}
\newcommand{\WeightSet}{\N^\infty_2}
\newcommand{\co}{\mskip0.5mu\colon\thinspace}
\begin{document}


   \title[]{Equivariant Heegaard genus of reducible 3-manifolds}
   \author{Scott A. Taylor}

  \begin{abstract}
  The equivariant Heegaard genus of a 3-manifold $W$ with the action of a finite group $G$ of diffeomorphisms is the smallest genus of an equivariant Heegaard splitting for $W$. Although a Heegaard splitting of a reducible manifold is reducible and although if $W$ is reducible, there is an equivariant essential sphere, we show that equivariant Heegaard genus may be super-additive, additive, or sub-additive under equivariant connected sum. Using a thin position theory for 3-dimensional orbifolds, we establish sharp bounds on the equivariant Heegaard genus of reducible manifolds, similar to those known for tunnel number. Along the way, we make use of a new invariant for $W$ which is much better behaved under equivariant sums. 
   \end{abstract}

   \date{\today}
\thanks{}

   \maketitle

\section{Introduction}
Throughout, let $W$ be a compact, connected, oriented 3-manifold having no spherical boundary components. Let $G$ be a finite group of orientation-preserving diffeomorphisms of $W$.  A subset $X \subset W$ is \defn{equivariant} if for all $g \in G$, either $g(X) \cap X = \nil$ or $g(X) = X$. It is \defn{invariant} if $g(X) = X$ for all $g \in G$. Famously, the Equivariant Sphere Theorem \cite{MY-EST} says that if $W$ is reducible, then there exists an equivariant essential sphere. Consequently, if $W$ is reducible, then there exists an equivariant system of essential spheres $S \subset W$ giving a factorization of $W$ into irreducible 3-manifolds. The (disconnected) manifold $W|_S$ created by cutting open along $S$ and gluing in 3-balls is said to be obtained by \defn{equivariant surgery} on $S$. We extend the action of $G$ across the 3-balls.

Closed 3-manifolds also have equivariant Heegaard splittings \cite{Zimmermann96}. (A Heegaard splitting for a disconnected manifold is the union of Heegaard splittings of its components.) The \defn{equivariant Heegaard genus} $\g(W;G)$ is the minimum genus of an \emph{equivariant} Heegaard \emph{splitting} for $W$. (This is the same as the minimal genus of an \emph{invariant} Heegaard \emph{surface} for $W$.) If $G$ is trivial, then $\g(W) = \g(W;G)$ is the \defn{Heegaard genus} of $W$. In general, $\g(W;G) \geq \g(W)$. A \defn{reducing sphere} for a Heegaard surface $H \subset W$ is a sphere intersecting $W$ in a single essential simple closed curve. If $W$ is reducible, Haken's lemma states that every Heegaard surface for $W$ admits a reducing sphere \cite{Haken}. In particular, if $W$ is reducible, every invariant Heegaard surface for $W$ admits a reducing sphere.

\textbf{Question 1:} (Equivariant Haken's Lemma) If $W$ is reducible and if $H$ is an invariant Heegaard surface for $W$, must there exist an \emph{equivariant} essential sphere in $W$ that is a reducing sphere for $H$?

One important consequence of Haken's lemma is that Heegaard genus of 3-manifolds is additive under connected sum. That is, $\g(W) = \g(W|_S)$. We could also ask:

\textbf{Question 2: } (Additivity of Equivariant Heegaard Genus) If $W$ is reducible and if $S \subset W$ is an invariant system of reducing spheres such that each component of $W|_S$ is irreducible, is $\g(W;G) = \g(W|_S; G)?$

A positive answer to Question 1 implies a positive answer to Question 2. Both questions have a positive answer in the case when $G$ acts freely on $W$ (Corollary \ref{free action} below). Another reason to think that the answer to both questions might be ``yes,'' is that recently Scharlemann \cite{Sch-Haken} showed that given a collection of pairwise disjoint essential spheres $S \subset W$, there is an isotopy of any Heegaard surface $H$ for $W$ so that each component of $S$ is a reducing sphere for $H$. In particular, if $S$ is as in Question 2 and if $H \subset W$ is an invariant Heegaard surface, then $H$ can be isotoped so that each component of $S$ is a reducing sphere for $H$. Of course, the isotopy may destroy the fact that $H$ is invariant.

In fact, the answers to Questions 1 and 2 are negative.  Throughout, $|X|$ is either the cardinality or the number of components of $X$. 

\begin{theorem*}
Equivariant Heegaard genus can be sub-additive, additive, or super-additive. In particular,
\begin{enumerate}
\item If $k \geq 2$ is large enough, then  there exists a reducible $W$, an invariant essential sphere $S$ for $W$ dividing $W$ into two irreducible factors, and a cyclic group $G$ of diffeomorphisms of $W$ having order $k$ such that
\[
\g(W;G) < \g(W|_S; G).
\]
\item For $k \geq 2$, there exists a reducible $W$, an invariant essential sphere $S$ for $W$ dividing $W$ into two irreducible factors, and a cyclic group $G$ of diffeomorphisms of $W$ having order $k$ such that
\[
\g(W;G) = \g(W|_S; G).
\]
\item For $k \geq 2$, there exists a reducible $W$ having a finite cyclic group of diffeomorphisms $G$ of order $k$, and an invariant essential sphere $S$ dividing $W$ into two irreducible manifolds, such that
\[
\g(W;G)  = \g(W|_S; G)  + k - 1.
\]
Furthermore, $\g(W;G)$ can be arbitrarily high for fixed $k$.
\end{enumerate}
\end{theorem*}

Conclusions (1) and (2) are the content of Theorems \ref{subadd} and \ref{add}. Conclusion (3) is the content of Theorem \ref{super-add}. Consequently, no invariant Heegaard surface $H$ for the manifolds $W$ in Conclusions (1) or (3) admits an equivariant reducing sphere. Perhaps not surprisingly we use facts about the non-additivity of tunnel number to prove those theorems. And so we are left with:

\textbf{Question 3:} What can we say about the relationship between $\g(W;G)$ and $\g(W|_S;G)$? 

We establish an upper bound for $\g(W;G)$ relative to a system of separating summing spheres. We also note that Rieck and Rubinstein \cite{RR} have produced an upper bound on $\g(W;G)$ in the case when $G = \{1, \tau\}$, with $\tau$ an involution. However, their upper bound does not apply in the case when $W$ is reducible. Theorem \ref{super-add} shows that the following inequality for $c = 1$ is sharp; the other inequality is likely also sharp. 

\begin{Thm-equivupper}
For any invariant system of summing spheres $S \subset W$ such that each component of $S$ is separating,
\[
\g(W;G) \leq \g(W|_S;G) + (c(|G|+1) - 2)(\big|W|_S\big|-1)
\]
where $c = 1$ if every point of $W$ has cyclic stabilizer and $c = 2$ otherwise.
\end{Thm-equivupper}

Turning to lower bounds, we show:

\begin{Thm-counting lower bd}
Suppose that $W$ is closed and connected and that $G$ does not act freely. If $S \subset W$ is an invariant system of summing spheres, then 
\[
\g(W;G) \geq |G|(\nu/12 - \mu/2) + |S| + 1,
\]
where $\nu$ is the number of orbits of the components of $(W|_S)$ that are not $S^3$ or lens spaces and $\mu$ is the number of orbits of components of $W|_S$ that are homeomorphic to $S^3$.
\end{Thm-counting lower bd}

This bound is somewhat weak since $\g(W;G) \geq \g(W) = \g(W|_S) \geq 2\nu$. However, it does give some indication of how the order of $G$ affects the equivariant Heegaard genus. Additionally, Examples \ref{ex 1} and \ref{exam 2} suggest we are unlikely to be able to improve the lower bound  without additional hypotheses. In a different direction, we show that equivariant Heegaard genus is additive or super-additive when the factors are equivariantly comparatively small. A 3-manifold $W$ is \defn{equivariantly comparatively small} relative to the action of a finite group of diffeomorphisms $G$ if every equivariant essential surface $F \subset W$ has  $\g(F) > \g(W;G)$.

\begin{Thm-equiv compare small}
Suppose that when $S \subset W$ is an invariant system of summing spheres, then every component of $W|_S$ is equivariantly comparatively small. Then:
\[
\g(W;G) \geq \g(W|_S; G).
\]
\end{Thm-equiv compare small}

As a by-product of our methods, we produce a new invariant which we call \defn{equivariant net Heegaard characteristic} $\netX(W;G)$. It is similar to equivariant Heegaard genus, but its additivity properties are entirely understood, at least when every component of $S$ is separating:
\begin{Thm-net equiv add}
There exists an invariant system of summing spheres $S \subset W$ such that
\[
\netX(W;G) \geq \netX(W|_S; G) - 2|S|,
\]
and $W|_S$ is irreducible. If each component of $S$ is separating, then equality holds.
\end{Thm-net equiv add}

Like equivariant Heegaard genus, $\netX(W;G)$ is defined by minimizing a certain quantity over a surface. In this case, however, it is not an equivariant Heegaard splitting but an equivariant generalized Heegaard splitting. Theorem \ref{net equiv add} arises from the fact that equivariant spheres arise as thin surfaces. We explain this in Section \ref{sec: thin} using orbifolds. Part of the point of this paper is to advertise the very useful properties of the invariant $\netX$, which is closely related to the invariant ``net extent'' of \cites{TT2, TT3}.

\subsection*{Orbifolds and Outline}

Three-dimensional orbifolds are the natural quotient objects resulting from finite diffeomorphism group actions on 3-manifolds. At least since Thurston's work on geometrization \cite[Chapter 13]{Thurston}, 3-dimensional orbifolds have been the essential tool for understanding finite group actions on manifolds; they are also important and interesting in their own right.  A closed orientable 3-orbifold is locally modelled on the quotient of a 3-ball by a finite group of orientation preserving diffeomorphisms. The \defn{singular set} is the set of points which are the images of those points with nontrivial stabilizer group. A closed, orientable 3-orbifold can thus be considered as a pair $(M,T)$ where $M$ is a closed orientable 3-manifold and $T \subset M$ is a properly embedded trivalent spatial graph with integer edge weights $\weight \geq 2$. If $e$ is an edge of $T$ with $\weight(e) = w$, then we say that $e$ has weight $w$. If $v \in T$ is a vertex with incident edges having weights $a,b,c$, we let $\X(v) = 1 - (1/a + 1/b + 1/c)$.

When $(M,T)$ is a closed, 3-dimensional orbifold, for each vertex $v$ we have $\X(v) < 0$. This occurs if and only if, up to permutation, $(a,b,c)$ is one of $(2,2,k)$, $(2,3,3)$, $(2,3,4)$, and $(2,3,5)$ for some $k \geq 2$. Conversely, every trivalent graph $T$ properly embedded in a compact (possibly with boundary) 3-manifold and having edge weights satisfying $\X(v) < 0$ at each trivalent vertex $v$ describes a compact, orientable 3-orbifold. We work somewhat more generally by taking edge weights in the set $\WeightSet = \{n \in\N : n \geq 2\} \cup \{\infty\}$. We define $1/\infty = 0$. This is similar to what is allowed by the software Orb \cite{Heard} for studying the geometry of 3-orbifolds. We consider the edges of weight $\infty$ as consisting of points belonging to a knot, link, or spatial graph in $M$ on which $G$ acts freely. Allowing such weights does not create any additional difficulties and means that our methods may be useful for studying symmetries of knots, links, and spatial graphs. Additionally, some of our bounds are achieved only when there are edges of infinite weight; as these bounds are asymptotically achieved when edges have only finite weights, it is conceptually clearer to allow edges of infinite weight. Henceforth, we define ``3-orbifold'' as follows. 

\begin{definition}
A \defn{3-orbifold} is a pair $(M,T)$ where $M$ is a compact, orientable 3-manifold, possibly with boundary and $T \subset M$ is a properly embedded trivalent graph such that each edge $e$ has a weight $\weight(e) \in \WeightSet$. We also require that at every trivalent vertex $v$, $\X(v) < 0$.
\end{definition}

When studying symmetries of knots or spatial graphs in a 3-manifold, vertices of degree 4 or more may arise, as may vertices incident to edges of infinite weight. Our framework can handle such vertices by considering them as boundary components. Our approach should be contrasted with the usual methods of proving Haken's lemma which involve intricate methods of controlling the intersections between a Heegaard surface and an essential sphere. We replace those arguments with appeals to machinery that guarantee that essential spheres show up as thin surfaces in a certain type of thin position. The basic structure of our arguments is as follows. 

In Section \ref{gn}, we define orbifold Heegaard splittings; show that they are the quotients of invariant Heegaard surfaces of $W$ by $G$; and show that each orbifold Heegaard splitting of the quotient orbifold lifts to a Heegaard splitting of $W$. We also review the correspondence between equivariant essential spheres and orbifold reducing spheres. Section \ref{Addsubadd} uses the correspondence to prove that equivariant Heegaard genus can be sub-additive or additive. Section \ref{sec: thin} adapts Taylor--Tomova's version of thin position to orbifolds. There we define the ``net Heegaard characteristic'' $\netX$ of an orbifold and establish its correspondence with equivariant net Heegaard characteristic. The equivariant net Heegaard characteristic of $W$ is bounded above by $2\g(W;G) - 2$.  We also prove Theorem \ref{net equiv add}. The key idea is to use Taylor-Tomova's partial order on the orbifold version of Scharlemann-Thompson's generalized Heegaard splittings for 3-manifolds. The minimal elements of this partial order are called ``locally thin.'' We show that $\netX$ is non-increasing under the partial order and that orbifold reducing spheres show up as thin surfaces in the generalized Heegaard splittings. These two properties make $\netX$ additive under orbifold-sums. Understanding the net Heegaard characteristic of the result of decomposing the orbifold by essential spheres allows us to produce lower bounds on $\netX$, and thus on $\g(W;G)$. Section \ref{sec: upper} establishes upper bounds on $\g(W;G)$ by adapting the amalgamation of generalized Heegaard splittings to orbifolds. Although, in principle, this should be straightforward to those who understand amalgamation of generalized Heegaard splittings, significant issues arise; issues which in some sense seem to characterize the non-additivity of equivariant Heegaard genus.

\subsection*{Acknowledgments} I am grateful to Marc Lackenby for suggesting the idea to apply the techniques of \cite{TT3} to orbifolds and equivariant Heegaard genus. Daryl Cooper, Yo'av Rieck, Marty Scharlemann, and an anonymous referee provided helpful comments. Finally, I'm grateful to Maggy Tomova for our long-time collaboration giving rise to the work that this paper is based on.

\section{General Notions, Orbifold Sums, and Orbifold Heegaard surfaces}\label{gn}
Throughout all manifolds and orbifolds are orientable. If $X$ is a topological space, we let $Y \cpt X$ mean that $Y$ is a path component of $X$. We let $X \setminus Y$ denote the complement of an open regular neighborhood of $Y$ in $X$. 

We refer the reader to \cites{Boileau, Cooper} for more on orbifolds. Suppose that $(M,T)$ is an orbifold. A properly embedded orientable surface $S \subset M$, transverse to $T$, naturally inherits the structure of a 2-suborbifold; that is a surface locally modelled on the quotient of a disc or half disc by a finite group of orientation-preserving isometries. We call the points $S \cap T$ the \defn{punctures} of $S$. If $p \in S \cap T$ is a puncture its \defn{weight} $\weight(p)$ is the weight of the edge intersecting it. The \defn{orbifold characteristic} of $S$ is defined to be:
\[
\X(S) = -\chi(S) + \sum\limits_{p \in S \cap T}\Big(1 - \frac{1}{\weight(p)}\Big) 
\]
This is the negative of the orbifold Euler characteristic of $S$. Observe that if $\pi\co R \to S$ is an orbifold covering map of finite degree $d$, then $\X(R) = d\X(S)$. An orbifold is \defn{bad} if it is not covered by a manifold and \defn{good} if it is. The bad 2-dimensional orbifolds are spheres that either have a single puncture of finite weight or have two punctures of different weight. 2-orbifolds that are spheres with three punctures are \defn{turnovers}. A good  connected 2-orbifold $S$ is \defn{spherical} if $\X(S) < 0$; \defn{euclidean} if $\X(S) = 0$; and \defn{hyperbolic} if $\X(S) > 0$. A compact, orientable 3-orbifold is good if it does not contain a bad 2-orbifold  \cite[Theorem 2.5]{Cooper}. 

\begin{definition}
A 3-orbifold $(M,T)$ is \defn{nice} if:
\begin{enumerate}
\item for each $S \cpt \boundary M$, $\X(S) \geq 0$ and if $S$ is a sphere then $|S \cap T| \geq 3$;
\item the pair $(M,T)$ has no bad 2-suborbifolds and no spheres that are once-punctured with infinite weight puncture;
\end{enumerate}
\end{definition}

The reason for forbidding bad 2-suborbifolds is that we cannot surger along them to create a valid 3-orbifold. The requirement that $\X(S) \geq 0$ for $S \cpt \boundary M$ helps with some of our calculations. The other requirements help with our use of the material from \cites{TT1, TT2}. We note that $(W, \nil)$ is a nice orbifold. (Recall  $\boundary W$ has no spheres by our initial definition of $W$.)

\subsection{Factorizations}
Petronio \cite{Petronio} proved a unique factorization theorem for closed good 3-orbifolds $(M,T)$ without nonseparating spherical 2-suborbifolds. We are working in a slightly more general context (because our orbifolds may not be closed, we allow good nonseparating spherical 2-suborbifolds, and we allow infinite weight edges). Nevertheless, we adopt Petronio's terminology and his results carry over to our setting as we now describe.

Suppose that $(M_1, T_1)$ and $(M_2, T_2)$ are distinct nice orbifolds. Let $p_1 \in M_1$ and $p_2 \in M_2$. We can perform a connected sum of $M_1$ and $M_2$ by removing a regular neighborhood of $p_1$ and $p_2$ and gluing the resulting 3-manifolds together along the newly created spherical boundary components; after gluing the corresponding sphere is a \defn{summing sphere}. To extend the sum to the pairs $(M_1, T_1)$ and $(M_2, T_2)$ we place conditions on the points. We require that they are either in the interiors of edges of the same weight or that they are on vertices with incident edges having matching weights. The gluing map is then required to match punctures to punctures of the same weight. 

When $p_1$ and $p_2$ are disjoint from $T_1$ and $T_2$, the sum is a \defn{distant sum} and we write $(M,T) = (M_1, T_1) \#_0 (M_2, T_2)$. When each $p_i$ is in the interior of an edge of $T_i$, it is a \defn{connected sum} and we write $(M,T) = (M_1, T_1) \#_2 (M_2, T_2)$. When each $p_i$ is a trivalent vertex, it is a \defn{trivalent vertex sum} and we write $(M,T) = (M_1, T_1) \#_3 (M_2, T_2)$. This sum is well-defined, for a particular choice of vertices $p_1, p_2$ and bijection from the ends of edges incident to $p_1$ to those incident to $p_2$. See \cite[Section 4]{Wolcott}. The pair $\mathbb{S}(0) = (S^3, \nil)$ is the identity for $\#_0$. The pair $\mathbb{S}(2) = (S^3, T)$ where $T$ is the unknot is the identity for $\#_2$. The pair $\mathbb{S}(3) = (S^3, T)$ where $T$ is a planar (i.e. trivial) $\theta$-graph, is the identity for $\#_3$. In each case, we allow $T$ to have whatever weights make sense in context. Note that when $k< i \leq j$, both factors of $\mathbb{S}(i) \#_k \mathbb{S}(j)$ are trivial. This means that some care is needed when discussing prime factorizations. 

Conversely, suppose that $(M,T)$ is a nice orbifold. Given a connected spherical 2-suborbifold $S \subset (M,T)$ we may split $(M,T)$ open along $S$ and glue in two 3-balls each containing a graph that is the cone on the points $S \cap T$. This operation is called \defn{surgery} along $S$. Observe that the result is still an orbifold and that if $M$ is connected, each component of $(M,T)|_S$ is incident to one or more \defn{scars} from the surgery (i.e. the boundaries of the 3-balls we glued in). If $S$ is such a sphere or the disjoint union of such spheres, we denote the result of surgery along (all components of) $S$ by $(M,T)|_S$. Each component of $S$ produces two scars in $(M,T)|_S$; we say those scars are \defn{matching}. If $S$ separates $M$, then surgery is the inverse operation to distant sum, connected sum, or trivalent vertex sum.

Since we will be dealing with reducible orbifolds, we need to work with slight generalizations of compressing discs. Suppose that $S \subset (M,T)$ is a surface. An \defn{sc-disc} for $S$ is a zero or once-punctured disc $D$ with interior disjoint from $T \cup S$, with boundary in $S \setminus T$, and which is not isotopic (by an isotopy everywhere transverse to $T$) into $S$. If $\boundary D$ does not bound a zero or once-punctured disc in $S$, then $D$ is a \defn{compressing disc} or \defn{cut disc} corresponding to whether $D$ is zero or once-punctured. A \defn{c-disc} is a compressing disc or cut disc. Otherwise, $D$ is a \defn{semi-compressing disc} or \defn{semi-cut disc} respectively. In other words, a semi-compressing disc or semi-cut disc is a zero or once-punctured disc with inessential boundary in the surface but which is not parallel into the surface. The \defn{weight} $\weight(D)$ of an sc-disc $D$ is equal to the weight of the edge of $T$ intersecting it and 1 otherwise. Compressing a surface using an sc-disc $D$ decreases $\X$ by $2/\weight(D)$.  If $\boundary M$ admits an sc-disc $D$, then $(M,T)\setminus D$ is the result of \defn{$\boundary$-reducing} $(M,T)$.

 If $S$ does not admit a c-disc, it is \defn{c-incompressible}. A \defn{c-essential surface} is a surface $S \subset (M,T)$ where each component is:
\begin{enumerate}
\item a c-incompressible surface,
\item not parallel in $M \setminus T$ into $\boundary (M\setminus T)$ (i.e. not \defn{$\boundary$-parallel}), and
\item not an unpunctured sphere bounding a 3-ball in $M \setminus T$.
\end{enumerate}

Observe that for a surface,  being parallel in $(M,T)$ to a component of $\boundary M$ is more restrictive than being parallel into $\boundary (M\setminus T)$. 

\begin{definition}
An orbifold $(M,T)$ is \defn{orbifold-reducible} if there exists a c-essential spherical 2-suborbifold $S \subset (M,T)$.
\end{definition}

\begin{definition}[{c.f. \cite{Petronio}}]\label{efficient}
Suppose that $(M,T)$ is a nice orbifold and that $S \subset (M,T)$ is a closed spherical essential 2-orbifold. Then $S$ is an \defn{system of summing spheres} if  $(M,T)|_S$ is orbifold-irreducible. A system of summing spheres $S$ is \defn{efficient} if, for $i \in \{0,2,3\}$, whenever an $\mathbb{S}(i)$ component of $(M,T)|_S$ contains an $i$-punctured scar, then it contains the matching scar.
\end{definition}

For a system of summing spheres $S \subset (M,T)$, we call $(M,T)|_S$, a \defn{factorization} of $(M,T)$ using $S$. The main difference between a factorization using an efficient system of summing spheres and a prime factorization is that a factor in an efficient factorization may contain a nonseparating sphere. By work of Petronio and Hog-Angeloni -- Matveev, in the absence of nonseparating spherical 2-orbifolds, up to orbifold homeomorphism for an efficient system of summing spheres $S$, both $S$ and its factorization $(M,T)|_S$ are unique; however, these homeomorphisms are not necessarily realizable by an isotopy in $(M,T)$. We note that neither the Petronio, Hog-Angeloni--Matveev results nor the following theorem (which is based on those) is trivial. For instance, as observed in \cite[Example 1.3]{Petronio}, the connected sum of any knot in $S^3$ with $(S^1 \times S^2, S^1 \times \text{(point)})$ is pairwise homeomorphic to $(S^1 \times S^2, S^1 \times \text{(point)})$. Consequently, $(S^1 \times S^2, S^1 \times \text{(point)})$ does not have a prime factorization. The crux of the existence of an efficient splitting system is deferred to Corollary \ref{thin sum} below and is a consequence of Taylor--Tomova's work on thin position. Based on the discussion in \cite[Section 3]{Petronio}, the ability of thin position to handle nonseparating spherical 2-suborbifolds seems to be an instance where the thin position techniques have an advantage over normal surface techniques. We relegate the proof to the appendix, since it is a slightly more elaborate version of Petronio's and Hog-Angeloni--Matveev's proofs.

\begin{theorem}[after Petronio, Hog-Angeloni--Matveev]\label{factor uniqueness}
Suppose that $(M,T)$ is a nice orbifold that is orbifold-reducible. Then there exists an efficient system of summing spheres $S \subset (M,T)$; indeed, any system of summing spheres contains an efficient subset. Furthermore, any two such systems  $S$, $S'$ are orbifold-homeomorphic, as are $(M,T)|_S$ and $(M,T)|_{S'}$.
\end{theorem}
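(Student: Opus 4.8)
The plan is to deduce the theorem from Petronio's prime decomposition theorem for closed good $3$-orbifolds \cite{Petronio} together with the uniqueness results of Hog-Angeloni--Matveev \cite{HAM}, after checking that the two mild generalizations present here --- that $M$ may have boundary and that $T$ may have edges of infinite weight --- introduce no genuinely new phenomena. For infinite-weight edges I would note that $\X(v) < 0$ at a trivalent vertex excludes the triples $(2,2,\infty)$, $(2,3,\infty)$, $\dots$, so every edge of infinite weight is a closed loop of $T$ or an arc meeting $\boundary M$; such edges behave in surgeries and in the intersection arguments below exactly like edges of large finite weight (one can even replace each $\infty$ by a sufficiently large integer). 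For the boundary, the hypotheses built into ``nice'' (boundary $2$-orbifolds have $\X \geq 0$, and boundary spheres have at least three punctures) ensure no summing sphere is $\boundary$-parallel and that the innermost-disc exchanges below never reach $\boundary M$.

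\textbf{Existence.} Starting from a c-essential sphere with $\X < 0$, which exists since $(M,T)$ is orbifold-reducible, I would choose a collection $S$ of pairwise disjoint, pairwise non-parallel, non-$\boundary$-parallel c-essential spheres of negative orbifold characteristic that is maximal subject to no component of $(M,T)|_S$ being orbifold-reducible. Finiteness of such a collection --- hence existence of a maximal one --- follows from a Haken--Kneser argument applied to a triangulation of $M$ adapted to $T$ (or, after passing to a finite manifold cover of a compact core). Since condition (3) of ``nice'' forces every c-essential sphere with $\X < 0$ to separate, and since surgery along separating spheres with at most three punctures and negative characteristic preserves all three conditions of ``nice'', the dual graph of $S$ in the connected manifold $M$ is a tree, and maximality makes $S$ a system of summing spheres. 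To achieve efficiency I would then repeatedly delete a sphere that cuts off an $\mathbb{S}(0)$ factor, an $\mathbb{S}(2)$ factor containing a twice-punctured scar, or an $\mathbb{S}(3)$ factor containing a thrice-punctured scar (i.e. amalgamate across each trivial factor); each such move lowers $|S|$, so the process terminates, and niceness and the tree property survive.

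\textbf{Uniqueness.} Given two efficient systems $S, S'$, I would isotope $S'$ transverse to $T \cup S$ to minimize $|S \cap S'|$. An innermost disc of $S'$ along $S$ is a zero- or once-punctured disc with boundary in $S \setminus T$; c-incompressibility of $S$ and $\X(S_i) \leq 0$ for each $S_i \cpt S$ force its boundary to bound a matching zero- or once-punctured disc in $S$, so a standard disc exchange reduces $|S \cap S'|$ unless it creates a sphere parallel to a component of $S$ --- the usual innermost-component bookkeeping handles this. Thus $S$ and $S'$ may be taken disjoint. Each component of $S'$ then lies in a non-orbifold-reducible component of $(M,T)|_S$, where it must be $\boundary$-parallel to a scar or one of the spheres outlawed by efficiency, and symmetrically with the roles of $S$ and $S'$ reversed. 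Following this correspondence through the two dual trees matches, up to orbifold homeomorphism, the spheres of $S$ with those of $S'$ and the factors of $(M,T)|_S$ with those of $(M,T)|_{S'}$; the only genuinely ambiguous case --- a factor equal to $\mathbb{S}(2)$ or $\mathbb{S}(3)$, where the homeomorphism need not come from an ambient isotopy of $(M,T)$ --- is exactly the one settled by \cite{HAM}, following the discussion in the proof of \cite[Theorem 2.2]{TT3}.

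\textbf{Main obstacle.} The crux is this last matching step: I must exclude the spurious non-uniqueness arising because $\mathbb{S}(2) \#_0 \mathbb{S}(2)$ and $\mathbb{S}(3) \#_2 \mathbb{S}(3)$ have trivial factors, and it is precisely efficiency conditions (4)--(6), read against the Hog-Angeloni--Matveev normal form for the corresponding graph-of-groups/amalgamated-product decomposition, that rigidify the factorization. The remaining new points --- Haken--Kneser finiteness in the orbifold-with-boundary, infinite-weight setting, and the verification that surgery preserves ``nice'' --- are routine but do need to be stated explicitly, since Petronio works only with closed orbifolds without nonseparating spherical suborbifolds.
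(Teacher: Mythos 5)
Your proposal is correct and follows essentially the same route as the paper: the paper's proof simply invokes Petronio's theorem \cite{Petronio} for the closed, finite-weight case and observes that boundary components and infinite-weight edges are handled as in Theorems 8 and 9 of \cite{HAM}, which is exactly the reduction you carry out (with the standard Haken--Kneser and innermost-disc details, which the paper leaves to the references, spelled out). Note also that the paper later gives an independent proof of the existence part via thin position in Corollary \ref{thin sum}, in the spirit of your maximal-collection argument.
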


The Equivariant Sphere Theorem is an important tool for studying group actions on 3-manifolds. The statement we use is inspired by \cite[Theorem 3.23]{Boileau} and the subsequent remark. 
\begin{theorem}[{Equivariant Sphere Theorem \cite{MY-EST} (c.f. \cite{Dunwoody})}]\label{est}
Suppose that $\rho\co (W, T') \to (M,T)$ is a regular orbifold covering, with $(W, T')$ and $(M,T)$ nice. Then $(W, T')$ is orbifold-reducible if and only if $(M,T)$ is orbifold reducible.
\end{theorem}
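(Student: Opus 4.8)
The plan is to reduce everything to the classical Equivariant Sphere Theorem for $3$-manifolds. First I would record the easy implication: if the base $(M,T)$ is orbifold-reducible then so is $(W,T')$. Let $S\subset(M,T)$ be a c-essential sphere with $\X(S)<0$; then $S$ has at most three punctures and is a spherical $2$-suborbifold, so each component of $\rho^{-1}(S)$ is again a $2$-sphere (its orbifold fundamental group is finite) and, by the formula $\X(R)=d\,\X(S)$ for a degree-$d$ orbifold cover, has negative orbifold characteristic. If no component of $\rho^{-1}(S)$ were c-essential in $(W,T')$, then each would be c-incompressible (at most three punctures) and not $\boundary$-parallel (niceness forces $\X\ge 0$ on boundary suborbifolds), hence would bound a trivial ball in $(W,T')$; choosing a component $\Sigma_0$ bounding an innermost such ball $B_0$ (one whose interior meets $\rho^{-1}(S)$ only in $\Sigma_0$, equivalently one minimal under inclusion), minimality forces $\mathrm{Stab}(\Sigma_0)$ either to act on $B_0$ — whence $\rho(B_0)=B_0/\mathrm{Stab}(\Sigma_0)$ is a trivial ball in $(M,T)$ bounded by a component of $S$, contradicting c-essentiality of $S$ — or to contain an element exchanging the two sides of $\Sigma_0$, forcing $(W,T')$ to be the double of a trivial ball, hence a spherical orbifold, so that $(M,T)$ is spherical and therefore orbifold-irreducible, again a contradiction. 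Thus some component of $\rho^{-1}(S)$ is a c-essential sphere of negative orbifold characteristic and $(W,T')$ is orbifold-reducible. This step uses only that finite group actions on $B^3$ are standard and that spherical orbifolds are orbifold-irreducible.

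For the converse, suppose $(W,T')$ is orbifold-reducible; I would pass to a manifold cover. If $(W,T')$ is already a manifold (the case used in the rest of the paper) take $\wihat W=(W,T')$; in general, $(W,T')$ is good (being nice), good compact $3$-orbifold groups are virtually torsion-free by geometrization, and one intersects conjugates so as to obtain a finite regular manifold cover $p\co\wihat W\to(M,T)$ with deck group $\wihat G$ factoring through $(W,T')\to(M,T)$. By the easy implication just proved, applied to $\wihat W\to(W,T')$, the manifold $\wihat W$ is reducible, and it has no nonseparating spheres (this too follows from niceness of $(M,T)$ together with the equivariant sphere theorem, since otherwise one could produce a nonseparating sphere of negative orbifold characteristic in $(M,T)$). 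Now apply the Equivariant Sphere Theorem (Meeks--Yau \cite{MY-EST}, Dunwoody \cite{Dunwoody}) to the $\wihat G$-action on $\wihat W$: there is a $\wihat G$-equivariant essential $2$-sphere $\wihat S\subset\wihat W$. Its image $S=p(\wihat S)$ is the quotient of $S^2$ by a finite group of orientation-preserving diffeomorphisms, hence a spherical $2$-suborbifold with at most three punctures and $\X(S)<0$. Finally $S$ is c-incompressible, is not $\boundary$-parallel (niceness plus $\X(S)<0$), and does not bound a trivial ball (else $\wihat S$ would bound a ball, contradicting that it is essential), so $S$ is c-essential and $(M,T)$ is orbifold-reducible.

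The geometric content is entirely in the cited Equivariant Sphere Theorem; the remaining work is bookkeeping, and the fussiest points are: (i) the statement ``a sphere with $\X<0$ in a nice orbifold fails to be c-essential precisely when it bounds a trivial ball'' — here the distinction emphasized in Section~\ref{gn} between being parallel in $(M,T)$ and being parallel in $M\setminus T$ is exactly what disposes of the once- and twice-punctured cases; (ii) verifying that the manifold covers that occur have no nonseparating spheres, so that ``reducible'' and ``essential sphere'' behave as expected; and (iii) the edges of infinite weight, which lift to an honest link or spatial graph on which the deck group acts freely and so introduce nothing new (or one may first replace large weights by large finite ones, as in the asymptotic remark of the introduction). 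The main obstacle is organizing (i) and (ii) cleanly rather than any deep difficulty; alternatively one could invoke \cite[Theorem~3.23]{Boileau} for the case of a closed manifold cover and only supply the extensions to boundary and to infinite weights.
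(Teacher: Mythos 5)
You should first note that the paper never proves Theorem \ref{est}: it is quoted from the literature (Meeks--Simon--Yau, cf.\ Dunwoody), with the orbifold formulation taken from \cite[Theorem 3.23]{Boileau} and the remark following it, so your proposal is measured against those standard arguments rather than an in-paper proof. Your ``easy'' direction is essentially sound: lifts of a spherical summing sphere are spheres with $\X<0$ and at most three punctures, hence automatically c-incompressible; in a nice orbifold such a sphere that is not c-essential does bound a trivial ball compressionbody (your point (i) is where the $\boundary$-parallel-in-$M\setminus T$ cases get disposed of); and a ball minimal among all such trivial balls is precisely invariant, so Zimmermann's description of finite actions on $B^3$ makes its image a trivial ball bounded by $S$, the desired contradiction. (Your ``side-exchanging'' case is in fact vacuous: a deck transformation stabilizing a component of the preimage of a two-sided surface acts as a deck transformation of a product neighborhood and so cannot swap its sides; this also spares you the unproved assertion that spherical orbifolds are orbifold-irreducible.)

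The genuine gap is in the descent step of the converse. You assert that the image $S=p(\wihat{S})$ is ``the quotient of $S^2$ by a finite group of orientation-preserving diffeomorphisms.'' Elements of $\wihat{G}$ stabilizing $\wihat{S}$ are orientation-preserving on $\wihat{W}$ but may reverse the coorientation of $\wihat{S}$, hence restrict to orientation-reversing maps of the sphere; then $p(\wihat{S})$ is a one-sided object (an $\mathbb{RP}^2$-type quotient, or a disc whose boundary lies on the weight-two singular locus), not an embedded two-sided spherical $2$-suborbifold, and one must instead take the boundary of an equivariant regular neighborhood and re-establish (c-)essentiality --- exactly the delicate content of the remark after \cite[Theorem 3.23]{Boileau} that you would be invoking, and you give no argument for it. This is compounded by your claim that $\wihat{W}$ has no nonseparating spheres ``by niceness of $(M,T)$ together with the equivariant sphere theorem'': besides being circular (it presupposes the descent you are trying to prove), it is false. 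For example, if $(M,T)$ is the quotient of $\mathbb{RP}^3\#\mathbb{RP}^3$ by an involution with one-dimensional fixed set, then $(M,T)$ is nice and $\pi_1^{orb}(M,T)$ is virtually $\Z$, so \emph{every} finite regular manifold cover $\wihat{W}$ is $S^2\times S^1$; the Meeks--Simon--Yau sphere there may well be a nonseparating fiber sphere with side-reversing stabilizer, which is precisely the configuration your argument does not handle. Finally, the infinite-weight edges are not cost-free: a c-essential sphere with $\X<0$ is automatically disjoint from them, so the equivariant sphere in $\wihat{W}$ must be produced disjoint from the preimage graph (e.g.\ by applying the equivariant sphere theorem in its exterior), and your fallback of replacing $\infty$ by large finite weights changes which spheres witness reducibility (for a composite knot $K$, $(S^3,K)$ with infinite weight is not orbifold-reducible while the finite-weight version is), so it cannot be used as stated.
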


\subsection{Orbifold Heegaard Splittings}

As is well known, a Heegaard splitting of a closed 3-manifold is a decomposition of the 3-manifold into the union of two handlebodies glued along their common boundary. Every closed 3-manifold has such a decomposition. 3-manifolds with boundary have a similar decomposition into two compressionbodies glued along their positive boundaries. Zimmermann (e.g. \cites{Zimmermann92, Zimmermann96, Zimmermann-survey}) defined Heegaard splittings of closed 3-orbifolds and used them to study equivariant Heegaard genus. In his decompositions, an orbifold handlebody is an orbifold that is the quotient of a handlebody under a finite group of diffeomorphisms. He gives an alternative description in terms of certain kinds of handle structures \cite[Proposition 1]{Zimmermann96}. We adapt this latter definition to define orbifold compressionbodies. Petronio \cite{Petronio-complexity} also defines handle structures for 3-orbifolds; the definitions differ only on the definition of 2-handles. Zimmermann's definition has the advantage that handle structures can be turned upside down. Most of our definitions apply to graphs in 3-manifolds more generally, so we state them in that generality if possible.

\begin{definition}[Handle Structures]
A \defn{ball 0-handle} or \defn{ball 3-handle} is a pair $(W, T_W)$ where $W$ is a 3-ball and $T_W$ is the cone on a finite (possibly empty) set of points in $\boundary W$. We also call ball 0-handles and 3-handles, \defn{trivial ball compressionbodies}. We set $\boundary_+ W = \boundary W$ and $\boundary_- W = \nil$. A \defn{product 0-handle} or \defn{product 3-handle} is a pair $(W, T_W)$ pairwise homeomorphic to $(F \times I, p \times I)$ where $F$ is a closed orientable surface, $p \subset F$ is finitely (possibly zero) many points. We also call product 0-handles and product 3-handles, \defn{trivial product compressionbodies}. We set $\boundary_\pm W$ to be the preimage of $F \times \{\pm 1\}$. The \defn{attaching region} for a 0-handle is the empty set and the \defn{attaching region} for a 3-handle $(W, T_W)$ is $(\boundary_+ W, T_W \cap \boundary_+ W)$.  A \defn{trivial compressionbody} is either a trivial ball compressionbody or a trivial product compressionbody.

A \defn{1-handle} or \defn{2-handle} is a pair $(H, T_H)$ pairwise homeomorphic to $(D^2 \times I, p \times I)$ where $p$ is either empty or is the center of $D^2$. The \defn{attaching region} of a 1-handle is the preimage of $(D^2 \times \boundary I, p \times \boundary I)$. The attaching region of a 2-handle is $((\boundary D^2) \times I, \nil)$. 
\end{definition}

A \defn{vp-compressionbody} $(C, T_C)$  is the union of finitely many 0-handles and 1-handles so that the following hold:
\begin{enumerate}
\item The 0-handles are pairwise disjoint, as are the 1-handles.
\item 1-handles are glued along their attaching regions to the positive boundary of the 0-handles, and are otherwise disjoint from the 0-handles
\item If $(H, T_H)$ is a 1-handle such that one component $(D,p)$ of its attaching region is glued to a 0-handle $(W, T_W)$, and if $T_H \neq \nil$, then $p \in T_H \cap \boundary_+ W$. 
\item $C$ is connected.
\end{enumerate}
See Figure \ref{fig: vpcompbdy} for an example. The ``vp'' stands for ``vertex punctured'' and is used since drilling out vertices changes trivial ball compressionbodies with vertices into trivial product compressionbodies. If $(C, T_C)$ is a vp-compressionbody, we let $\boundary_- C$ be the union of $\boundary_- W$ over the 0-handles $(W, T_W)$ and we let $\boundary_+ C = \boundary C \setminus \boundary_- C$. Edges of $T_C$ disjoint from $\boundary_+ C$ are called \defn{ghost arcs}. Closed loops disjoint from $\boundary C$ are \defn{core loops}. Edges with exactly one endpoint on $\boundary_+ C$ are \defn{vertical arcs} and edges with both endpoints on $\boundary_+ C$ are \defn{bridge arcs}. Dually, vp-compressionbodies may be defined as the union of 2-handles and 3-handles. Equivalently, if $(C, T_C)$ is a connected pair with one component of $\boundary C$ designated as $\boundary_+ C$, then $(C, T_C)$ is a vp-compressionbody if and only if there is a collection of pairwise disjoint sc-discs $\Delta$ for $\boundary_+ C$ such that the result of $\boundary$-reducing $(C, T_C)$ along $\Delta$ is the union of trivial ball compressionbodies and trivial product compressionbodies. The collection $\Delta$ is a \defn{complete collection of sc-discs} for $(C, T_C)$ if it is pairwise nonparallel. If $T_C$ has weights such that $(C, T_C)$ is both a vp-compressionbody and an orbifold, then we call $(C, T_C)$ an \defn{orbifold compressionbody}.

\begin{figure}[ht!]
\labellist
\small\hair 2pt
\pinlabel {$\boundary_- C$} [l] at 188 2
\pinlabel {$\boundary_+ C$} [r] at 93 77
\endlabellist
\centering
\includegraphics[scale=0.8]{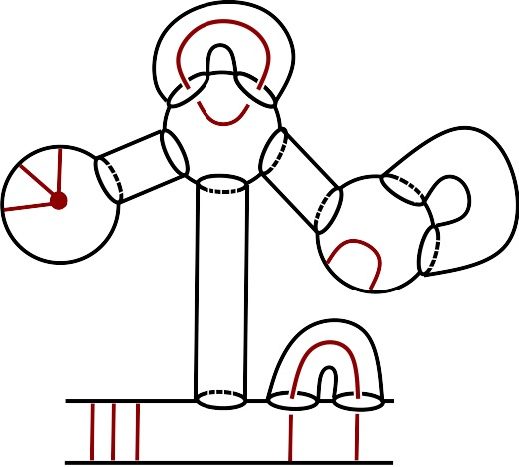}
\caption{An example of a vp-compressionbody $(C, T_C)$. It has one ghost arc, one core loop, one bridge arc, and three vertical arcs. The horizontal lines represent a closed, possibly disconnected surface $F$.}
\label{fig: vpcompbdy}
\end{figure}

For a nice orbifold $(M,T)$, an \defn{orbifold Heegaard surface} is a transversally oriented separating connected surface $H \subset (M,T)$ such that $H$ cuts $(M,T)$ into two distinct orbifold compressionbodies, glued along their positive boundaries. We define the \defn{Heegaard characteristic} of $(M,T)$ to be:
\[
\X(M,T) = \min\limits_H \X(H)
\]
where the minimum is over all orbifold Heegaard surfaces $H$ for $(M,T)$. The invariant $\X(M,T)$ is twice the negative of the  ``Heegaard number'' defined by Mecchia-Zimmerman \cite{MZ}.  Dividing by 2 would also make the comparison with \cites{TT2, TT3} easier. However, since orbifold Euler characteristic need not be integral, making that normalization would unpleasantly complicate some of the calculations in this paper. 

The \defn{equivariant Heegaard characteristic} of $W$ is \[\X(W;G) = \min\limits_H \X(H)\] where the minimum is taken over all \emph{invariant} Heegaard surfaces for $H$. Zimmermann proved the following when $W$ is closed; the proof extends to the case when $W$ has boundary as we explain. See also \cite{Futer} for a similar result related to strong involutions on tunnel number 1 knots. The statement is deceptively simple as it implies (and its proof relies on) the Smith Conjecture as well as a thorough understanding of group actions on 3-balls and products. See the remark on page 52 of \cite{Boileau}. 

\begin{lemma}[{after Zimmermann \cite{Zimmermann96}}]\label{quotient info}
Suppose that $W$ has orbifold quotient $(M,T)$. Every invariant Heegaard surface for $W$ descends to an orbifold Heegaard surface for $(M,T)$ and every orbifold Heegaard surface for $(M,T)$ lifts to an invariant Heegaard surface for $W$. Consequently,
\[
\X(W;G) = |G|\X(M,T).
\]
\end{lemma}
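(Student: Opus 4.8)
The plan is to prove the two directions of the bijection between equivariant Heegaard surfaces for $W$ and orbifold Heegaard surfaces for $(M,T)$ separately, and then extract the characteristic formula using multiplicativity of orbifold characteristic under finite covers.

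First I would handle the descent direction. Let $H \subset W$ be an equivariant Heegaard surface, so $W = V_1 \cup_H V_2$ with $V_1, V_2$ handlebodies (or compressionbodies, if $\boundary W \neq \nil$). Because $H$ is equivariant, $G$ permutes the two sides or preserves each; after passing to an index-$\leq 2$ subgroup if necessary — or better, by the hypothesis that all elements are orientation-preserving and that $H$ is transversally oriented and separating, so in fact $G$ preserves each side — each $V_i$ is a $G$-invariant (union of) compressionbodies, and $H/G \subset (M,T)$ is an embedded surface. The work here is to show $V_i / G$ is an orbifold compressionbody: this is Zimmermann's analysis \cite{Zimmermann96} of finite group actions on handlebodies, which says the quotient of a handlebody by a finite group of orientation-preserving diffeomorphisms has a handle structure of the required type (0-handles and 1-handles, with edges of $T$ sitting as ghost arcs, core loops, vertical and bridge arcs coming from fixed-point sets), and analogously for products. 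For the boundary case I would note that a $G$-invariant compressionbody is handled by the same argument applied to $\boundary_-$-product pieces together with the handlebody part, which is the promised extension ``as we explain.''

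Next I would do the lifting direction. Given an orbifold Heegaard surface $\bar H \subset (M,T)$ splitting $(M,T)$ into orbifold compressionbodies $(C_1, T_{C_1})$ and $(C_2, T_{C_2})$, I would pull $\bar H$ back under the covering $\rho\co W \to M$ (the branched cover associated to the orbifold structure) to get $H = \rho^{-1}(\bar H)$, which is automatically equivariant. The point to check is that $\rho^{-1}(C_i)$ is a disjoint union of compressionbodies, which again follows because an orbifold compressionbody is, by definition/Zimmermann, precisely a quotient of a (disjoint union of) compressionbodies; alternatively one verifies directly that the $G$-cover of a vp-compressionbody, unpacked handle by handle using the sc-disc characterization, $\boundary$-reduces to trivial compressionbodies. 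Here the subtlety is connectedness of $W$ and that $H$ is genuinely Heegaard (i.e. the two sides are glued along their entire positive boundary and the surface is connected if $W$ is); this is where the orbifold-covering formalism and the fact that $\rho$ is unbranched over $M \setminus T$ get used, together with the Smith conjecture / equivariant sphere-and-ball technology to rule out degenerate behaviour.

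Finally, for the characteristic formula: combining the two directions, the minimum defining $\X(W;G)$ is attained on surfaces of the form $\rho^{-1}(\bar H)$, and $\rho^{-1}(\bar H) \to \bar H$ is an orbifold covering of degree $|G|$, so $\X(\rho^{-1}(\bar H)) = |G|\,\X(\bar H)$ by the multiplicativity of orbifold characteristic under finite orbifold covers (noted in the excerpt right after the definition of $\X(S)$). Taking the minimum over all $\bar H$ on the right and over all equivariant $H$ on the left — which correspond under the bijection — gives $\X(W;G) = |G|\,\X(M,T)$. I expect the main obstacle to be the careful verification that the quotient of a $G$-invariant (handlebody-or-)compressionbody is an orbifold compressionbody and, dually, that the $G$-cover of an orbifold compressionbody is a disjoint union of compressionbodies; this rests on Zimmermann's structural results on finite actions on handlebodies and products, and in the bounded case requires tracking how $\boundary_-$ behaves under the action — but all of this is cited input, so the argument in the paper should be a short bookkeeping reduction rather than a new proof.
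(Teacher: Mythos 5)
Your outline matches the paper's overall architecture (descent, lift, then multiplicativity of orbifold characteristic under the degree-$|G|$ cover), but there is a genuine gap at precisely the step you dismiss as cited input. Zimmermann's result in \cite{Zimmermann96} covers handlebodies, i.e.\ the closed case; the whole reason the paper gives a proof at all (``the proof extends to the case when $W$ has boundary as we explain'') is to establish the statement for compressionbodies with $\boundary_- \neq \nil$, and your proposal supplies no argument for this. The paper's proof is exactly that missing argument: for a trivial product $Y = F \times I$ it notes that orientation-preservation prevents swapping the two boundary components, caps off and reduces to the ball case when $F = S^2$, and for $F \neq S^2$ doubles $Y$ to the Seifert fibered space $F \times S^1$ and applies Hass's theorem \cite[Theorem 1.5]{Hass} to obtain a $G$-equivariantly vertical collection of spanning annuli cutting $Y$ into 3-balls, whence the quotient is a trivial product orbifold compressionbody; then, for a general compressionbody with compressible $\boundary_+ Y$, it invokes the Equivariant Disc Theorem \cite{MY-EDL} to produce an equivariant essential disc and inducts on $\X(\boundary_+ Y) - \X(\boundary_- Y)$. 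None of this is ``analogous'' to the handlebody case or quotable from \cite{Zimmermann96}; saying ``the same argument applied to $\boundary_-$-product pieces together with the handlebody part'' begs the question, since the issue is to show that the quotient of the product piece is again a pair $(F' \times I, p \times I)$, and that is where the real geometric input (Hass's theorem, or some equivalent on actions on $F \times I$) enters.

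A smaller point: your parenthetical claim that orientation-preservation together with the transverse orientation of $H$ forces every element of $G$ to preserve each side of $H$ is not a valid deduction, since an orientation-preserving diffeomorphism may reverse the co-orientation of $H$ while reversing the orientation of $H$; side-preservation should either be built into the definition of equivariant Heegaard surface (as is implicit in the paper and in \cite{Zimmermann96}) or argued separately. Your treatment of the lifting direction and of the formula $\X(W;G) = |G|\X(M,T)$ via multiplicativity of $\X$ under finite orbifold covers agrees with the paper, which indeed handles those parts essentially by the definitions once the quotient statement for compressionbodies is in place.
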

\begin{proof}
Suppose that $Y$ is a compressionbody (i.e. orbifold compressionbody with empty graph) and that $G$ is a finite group of orientation-preserving diffeomorphisms of $Y$. We show the quotient orbifold is an orbifold compressionbody. If $\boundary_- Y = \nil$ (i.e. $Y$ is a handlebody), then the quotient orbifold is an orbifold compressionbody by \cite{Zimmermann96}. In particular, the quotient of a 3-ball is an orbifold trivial ball compressionbody. Consider the case when $Y = F \times I$ for a closed, connected, oriented surface $F$. Since each element of $G$ is orientation-preserving, no element interchanges the components of $\boundary Y$. If $Y = S^2 \times I$, we cap off $\boundary_- Y$ with a 3-ball, extend the $G$-action across the 3-ball, and appeal to the 3-ball case to observe that the quotient orbifold is a trivial product orbifold compressionbody. Suppose that $F \neq S^2$. Let $DY$ be its double and observe that we can extend the action of $G$ to $DY$. The 3-manifold $DY$ is a Seifert fiber space with no exceptional fibers, namely $F \times S^1$. Consider an embedded torus $DQ$ that is the double of an essential spanning annulus $Q$ in $Y$. By \cite[Theorem 1.5]{Hass}, $DQ$ can be isotoped to $DQ'$ so that for each $g \in G$, $gDQ'$ is vertical in $DQ$ and is isotopic to $gDQ$. Since each $g \in G$ preserves each component of $\boundary Y$, there is an annulus $Q' \subset Y$, vertical in $Y$, such that for each $g \in G$, $gQ'$ is isotopic to $gQ$ by a proper isotopy in $Y$. Applying this observation to each annulus in a collection of spanning annuli in $Y$ cutting $Y$ into 3-balls, we can then conclude that the quotient orbifold of $Y$ by the action of $G$ is a trivial product orbifold, as desired. Finally, suppose that $\boundary_+ Y$ is compressible. By the Equivariant Disc Theorem \cite{MY-EDL}, $Y$ admits an equivariant essential disc. Boundary-reducing along this disc and inducting on $\X(\boundary_+ Y) - \X(\boundary_- Y)$ shows that the quotient orbifold is again an orbifold compressionbody. Our lemma then follows from the definition of invariant Heegaard surfaces, orbifold Heegaard surfaces, and the multiplicativity of orbifold characteristic under finite covers.
\end{proof}

\begin{corollary}\label{free action}
If $G$ acts freely on $W$, then there exists an invariant system of summing spheres $S \subset W$ such that $W|_S$ is irreducible and $\g(W;G) = \g(W|_S;G)$.
\end{corollary}
\begin{proof}
Let $(M,T)$ be the quotient orbifold of the action of $G$ on $W$. Assume the action is free so that $T =\nil$. There exists an efficient system of summing spheres $\ob{S} \subset M$. By Haken's Lemma, $\g(M) = \g(M|_{\ob{S}})$. Let $S$ be the pre-image of $\ob{S}$ in $W$. By Lemma \ref{quotient info} (after converting from Euler characteristic to genus), $\g(W;G) = \g(W|_S; G)$.
\end{proof}

\section{Examples of additivity and sub-additivity}\label{Addsubadd}
As an example of how to work with orbifold Heegaard surfaces, in this section we show that equivariant Heegaard genus can be both sub-additive and additive. The examples of super-additivity require different techniques. Our examples arise from cyclic branched covers of knots in $S^3$. We begin with a preliminary calculation.

Suppose that $K \subset S^3$ is a knot with weight $k \geq 2$. When we ignore the weight $k$, an orbifold Heegaard surface $H$ for $(S^3, K)$ of genus $g$ is a $(g,b)$-bridge surface where $b = |H \cap K|/2$ \cite{Doll}. Conversely, remembering the weight $k$, makes each $(g,b)$-bridge surface $H$ for $K$ into an orbifold Heegaard surface for $(S^3, K)$. If $H$ is such a surface, observe that
\[
\X(H) = 2(g + b - 1) - 2b/k.
\]
If $H$ is disjoint from $K$, then $K$ lies as a core loop of the handlebody to one side of $H$. The \defn{tunnel number} $\t(K)$ of $K$ is the minimum of $\g(H) - 1$ over all such $H$. If $H$ is any $(g,b)$-bridge surface for $K$, then by attaching tubes along the arcs of $K \setminus H$, we can convert $H$ into a $(g + b, 0)$-bridge surface for $K$. Consequently, $\t(K) \leq g + b - 1$ and $\X(S^3, K) \leq 2\t(K)$. The same technique shows that if, for fixed $g$, $H$ minimizes $b$, then $b \leq \b(K)$ where $\b(K)$ is the bridge number of $K$. (Recall $\b(K)$ is the minimal $b'$ such that $K$ admits a $(0,b')$-bridge surface.)

For $i = 1,2$, let $K_i \subset S^3$ be a knot of weight $k$ and let $H_i$ be a $(g_i, b_i)$-bridge surface for $K_i$. Assume that $b_1, b_2 > 0$. If we perform the connected sum of $K_1$ and $K_2$ using points of $K_1 \cap H_1$ and $K_2 \cap H_2$, then $H = H_1 \# H_2$ is a $(g_1 + g_2, b_1 + b_2 - 1)$-bridge surface for $K = K_1 \# K_2$. Note that if $S$ is the summing sphere for $(S^3, K)$ arising from our choice of connected sum, then $\X(S) = - 2/k$. Consequently,
\[
\X(H) = \X(H_1) + \X(H_2) - \X(\ob{S}).
\]

\begin{theorem}\label{subadd}
If $k \geq 2$ is large enough, then  there exists a reducible $W$, an invariant essential sphere $S$ for $W$ dividing $W$ into two irreducible factors, and a cyclic group $G$ of diffeomorphisms of $W$ having order $k$ such that
\[
\g(W;G) < \g(W|_S; G).
\]
\end{theorem}
\begin{proof}
We recall that there are many examples (e.g. \cites{Morimoto-degen, Nogueira, Schirmer}) of prime knots $K_1, K_2$ for which tunnel number is sub-additive. That is, $\t(K_1) + \t(K_2) - \t(K_1 \# K_2) \geq 1$. Recall $K= K_1 \# K_2$. Let $\ob{S}$ be the summing sphere. By \cite{Schubert}, $\b(K) = \b(K_1) + \b(K_2) - 1$. Suppose that $k > \b(K)$. Let $H_i$ be a $(g_i, b_i)$-bridge surface for $K_i$ such that $\X(H_i) = \X(S^3, K_i)$. According to the preamble, $\X(H_i) \geq 2\t(K_i) - 2b_i/k$. Thus,
\[\begin{array}{rcl}
\X(S^3, K_1) + \X(S^3, K_2) - \X(\ob{S}) &\geq& 2(\t(K_1) + \t(K_2)) - 2(b_1 + b_2 - 1)/k\\
&\geq& 2\t(K) + 2 -  2(b_1 + b_2 - 1)/k \\
&\geq& 2(\t(K) +1 - \b(K)/k)\\
&>& 2\t(K)\\
&\geq& \X(S^3, K).
\end{array}
\]
Passing to the $k$-fold cyclic branched cover $W$ over $K$, with $G$ the deck group and $S$ the preimage of $\ob{S}$,  produces the desired examples.
\end{proof}

\begin{remark}
From Morimoto's examples of tunnel number degeneration \cite{Morimoto-degen} we can see that in the proof of Theorem \ref{subadd}, we can choose the desired $K_1$ and $K_2$ so that $\b(K) \leq 6$. Thus, $k \geq 7$ suffices in the statement of Theorem \ref{subadd}.
\end{remark}

We now turn to examples of additivity. 

\begin{theorem}\label{add}
For $k \geq 2$, there exists a reducible $W$, an equivariant essential sphere $S$ for $W$ dividing $W$ into two irreducible factors, and a cyclic group $G$ of diffeomorphisms of $W$ having order $k$ such that
\[
\g(W;G) = \g(W|_S; G).
\]
\end{theorem}

\begin{proof}
Fix $k \geq 2$. For $i = 1,2$, let $K_i$ be a torus knot of type $(p,q)$ with $p, q$ relatively prime and $|p| > |q| \geq 5$. By \cite{Schubert}, each $\b(K_i) = |q|$. Also, each $K_i$ admits a $(1,1)$-bridge surface $H_i$. Thus, $\X(S^3, K_i) = \X(H_i) = 2(1 - 1/k)$. As in the preamble, let $K = K_1 \# K_2$ and $H = H_1 \# H_2$ so that $H$ is a $(2,1)$-bridge surface for $K$. Let $\ob{S}$ be the summing sphere. By the calculations in the preamble, 
\[
4 - \frac{2}{k} = \X(H) = \X(S^3, K_1) + \X(S^3, K_2) - \X(\ob{S}).
\]

Let $H'$ be a $(g,b)$-bridge surface for $K$ such that $\X(H') = \X(S^3, K)$. In particular, for that genus $g$, the number of punctures $2b$ is minimal. Also, since $\X(H') \leq \X(H)$:
\[
g \leq 2 - \frac{(k-1)(b-1)}{k} < 3 
\]

If $g = 0$, then by Schubert's result \cite{Schubert} that bridge number is $(-1)$-additive, we have $b = 2|q| - 1$. In which case,
\[
2 - 1/k = \X(H)/2 \geq \X(H')/2 \geq (2|q| - 1)(1 - 1/k) - 1
\]
a contradiction to the fact that $|q| \geq 5$ and $k \geq 2$.

If $g = 1$, our inequality shows that $b \leq 3$. Since $K$ is not the unknot, $b \geq 1$. Doll studied the situation when $g = 1$ and showed that (for our choice of $K_1$ and $K_2$), $b \geq |q| - 1$. (The proof for arbitrary $K_1$ and $K_2$ can be found in the solution \cite[Section 5]{Doll} of his Conjecture (1.1') for the case $g = 1$.) In which case,
\[
4 - 2/k = \X(H) \geq \X(H') = 2b(1 - 1/k) \geq 2(|q|-1)(1 - 1/k) \geq 8(1 - 1/k).
\]
But this contradicts the assumption that $k \geq 2$. 

If $g = 2$, and $b = 0$, then the tunnel number of $K$ would be 1, contradicting the fact that $K$ is composite. Thus, $g = 2$ and $b \geq 1$. Since $\X(H') \leq \X(H)$, we have $b = 1$ and so, $\X(H') = \X(H)$.

Thus, $\X(S^3, K) = \X(H') = \X(H) = \X(S^3, K_1) + \X(S^3, K_2) - \X(S)$. Passing to the $k$-fold cyclic branched cover $W$ over $K$, with $G$ the deck group and $S$ the preimage of $\ob{S}$,  produces the desired examples.
\end{proof}

\section{Orbifold thin position}\label{sec: thin}

Building on a long line of work concerning thin position, beginning with Gabai \cite{G3} and particularly including Scharlemann--Thompson \cites{ST-thingraph,ST-3mfld}, Hayashi--Shimokawa \cite{HS}, and Tomova \cite{Tomova}, Taylor--Tomova created a thin position theory for spatial graphs in 3-manifolds. In this section, we explain the minor adaptions needed to make it work for 3-orbifolds.

\begin{definition}[{Taylor--Tomova \cite{TT1}}]
Let $M$ be a compact, orientable 3-manifold and $T \subset M$ a spatial graph. A properly embedded closed surface $\mc{H} \subset (M,T)$ is a \defn{multiple vp-bridge surface} if the following hold:
\begin{enumerate}
\item $\mc{H}$ is the disjoint union of $\mc{H}^+$ and $\mc{H}^-$ where each of $\mc{H}^\pm$ are the union of components of $\mc{H}$;
\item Each component of $(M,T)\setminus \mc{H}$ is a vp-compressionbody;
\item $\mc{H}^+ = \bigcup \boundary_+ C$ and $\mc{H}^- \cup \boundary M = \bigcup \boundary_- C$ where each union is over all components $(C, T_C) \cpt (M,T) \setminus \mc{H}$. 
\end{enumerate}
When $\mc{H}$ has a transverse orientation, we can consider the \defn{dual digraph}; this is the digraph with a vertex for each vp-compressionbody and an oriented edge corresponding to each component of $\mc{H}$. We consider such $\mc{H}$ equipped with a transverse orientation such that the dual digraph is acyclic and each $(C,T_C) \cpt (M,T)\setminus \mc{H}$ is a cobordism from $\boundary_- C$ to $\boundary_+ C$. Equipped thusly, $\mc{H}$ is an \defn{oriented multiple vp-bridge surface}. We let $\H(M,T)$ be the set of oriented multiple vp-bridge surfaces up to isotopy transverse to $T$. When $(M,T)$ is an orbifold, we call the elements of $\H(M,T)$ \defn{multiple orbifold Heegaard surfaces}.
\end{definition}

After assigning an orientation, every orbifold Heegaard surface for a nice orbifold $(M,T)$ is an element of $\H(M,T)$. Since every spatial graph in a 3-manifold can be put into bridge position with respect to any Heegaard surface for the 3-manifold, $\H(M,T) \neq \nil$. For $\mc{H} \in \H(M,T)$, observe that $\mc{H}^- = \nil$ if and only if $\mc{H} = \mc{H}^+$ is connected.  If $T = \nil$, multiple vp-bridge surfaces induce the generalized Heegaard splittings of \cite{SSS}. The following lemma is a straightforward extension of Lemma \ref{quotient info}. 

\begin{lemma}\label{mvp preimage}
Suppose that $(M,T) \to (M', T')$ is a finite-sheeted orbifold cover and that $\mc{H} \in \H(M',T')$. Then the preimage of $\mc{H}$ is a multiple orbifold Heegaard surface for $(M,T)$. 
\end{lemma}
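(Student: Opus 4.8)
The plan is to verify, condition by condition, that $\widetilde{\mc H} := p^{-1}(\mc H)$ is a multiple vp-bridge surface for $(M,T)$, where $p\co (M,T)\to(M',T')$ is the covering map and $\mc H\in\H(M',T')$ is the given oriented multiple vp-bridge surface downstairs. The combinatorial decomposition data transport through $p$ essentially formally; the one place where there is something to prove — exactly as in Lemma \ref{quotient info}, but run in the covering direction rather than the quotient direction — is the claim that every complementary piece of $\widetilde{\mc H}$ is a vp-compressionbody.

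First I would set $\widetilde{\mc H}^{\pm} = p^{-1}(\mc H^{\pm})$ and record the formal consequences of $p$ being an orbifold covering: $\widetilde{\mc H}$ is properly embedded, closed, and transverse to $T$; $\widetilde{\mc H}^{+}$ and $\widetilde{\mc H}^{-}$ are disjoint unions of components of $\widetilde{\mc H}$; $p^{-1}(\boundary M') = \boundary M$; and $p$ restricts to a covering of each piece $(C,T_C)\cpt(M',T')\setminus\mc H$ by the components of $p^{-1}(C)$. Granting that every such component $(\widetilde C,T_{\widetilde C})$ is a vp-compressionbody with $\boundary_{+}\widetilde C = p^{-1}(\boundary_{+}C)\cap\widetilde C$, condition (1) is immediate, condition (3) follows by applying $p^{-1}$ to the identities $\mc H^{+} = \bigcup\boundary_{+}C$ and $\mc H^{-}\cup\boundary M' = \bigcup\boundary_{-}C$, and condition (2) is the granted claim. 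The transverse orientation on $\mc H$ pulls back along $p$, making each $(\widetilde C,T_{\widetilde C})$ a cobordism from $\boundary_{-}\widetilde C$ to $\boundary_{+}\widetilde C$; moreover the dual digraph of $\widetilde{\mc H}$ maps to the acyclic dual digraph of $\mc H$ preserving edge directions, so a directed cycle upstairs would project to a directed closed walk downstairs, which is impossible. Hence $\widetilde{\mc H}\in\H(M,T)$ once the claim is established.

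To prove the claim — that a connected orbifold covering $(\widetilde C,T_{\widetilde C})\to(C,T_C)$ of a vp-compressionbody, with $\boundary_{+}\widetilde C$ the preimage of $\boundary_{+}C$, is again a vp-compressionbody — I would use the sc-disc characterization. Fix a complete collection $\Delta$ of sc-discs for $(C,T_C)$ whose $\boundary$-reduction is a union of trivial ball and product compressionbodies, and set $\widetilde\Delta = p^{-1}(\Delta)$. A component of $\widetilde\Delta$ covers some component $D$ of $\Delta$; since a disc is simply connected and an edge of $T_C$ of weight $w$ is covered by edges of weight dividing $w$ (together with unpunctured arcs where the branching unwinds completely), every component of $\widetilde\Delta$ is a zero- or once-punctured disc with boundary in $p^{-1}(\boundary_{+}C\setminus T_C)\subset\boundary_{+}\widetilde C\setminus T_{\widetilde C}$. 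After discarding any component isotopic into $\boundary_{+}\widetilde C$ (these only bound trivial collars, so they do not affect the $\boundary$-reduction), we are left with a collection of pairwise disjoint sc-discs for $\boundary_{+}\widetilde C$, and since $\boundary$-reduction commutes with covering, $\boundary$-reducing $(\widetilde C,T_{\widetilde C})$ along this collection yields an orbifold covering the union of trivial ball and product compressionbodies that results from $\boundary$-reducing $(C,T_C)$ along $\Delta$. Each component of the result is then a connected orbifold covered by a $3$-ball or by $F\times I$, with positive boundary covering positive boundary; by the resolution of the Smith Conjecture together with the analysis of finite group actions on balls and on products (the equivariant loop, sphere, and disc theorems, and \cite{Hass} for products) — precisely the package used for Lemma \ref{quotient info} — such an orbifold is a trivial ball or product compressionbody. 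Reassembling the $1$-handles shows $(\widetilde C,T_{\widetilde C})$ is a vp-compressionbody.

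The hard part is that last invocation: the statement that a connected orbifold covered by a ball or by $F\times I$ is a trivial compressionbody is genuinely deep, resting on the Smith Conjecture and the classification of finite group actions on these pieces. But it is exactly the input already used in Lemma \ref{quotient info}, merely applied to covers rather than quotients, which is why the present lemma is a ``straightforward extension''; everything else — the behaviour of sc-discs under $p^{-1}$, the commutation of $\boundary$-reduction with covering, and the pullback of the orientation — is routine.
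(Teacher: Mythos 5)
Your argument is correct and is essentially the proof the paper intends: the paper gives no details beyond calling this a straightforward extension of Lemma \ref{quotient info}, and your scheme --- transport the decomposition formally, pull back a complete collection of sc-discs to reduce to covers of trivial ball and product pieces, and settle those with the same Smith-Conjecture/finite-group-actions-on-balls-and-products package that underlies Lemma \ref{quotient info} --- is exactly that extension. Your remaining checks (pulled-back transverse orientation, acyclicity of the dual digraph via projection of a directed cycle) are also handled correctly.
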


When $(M,T)$ does not contain any once-punctured spheres, Taylor and Tomova \cite{TT2} define an invariant called ``net extent'' on elements of $\H(M,T)$. We now adapt that invariant to the orbifold context.

\begin{definition}
Suppose that $(M,T)$ is a nice orbifold. For $\mc{H} \in \H(M,T)$, the \defn{net Heegaard characteristic} is
\[
\netX(\mc{H}) = \X(\mc{H}^+) - \X(\mc{H}^-).
\]
We define the \defn{net Heegaard characteristic} of $(M,T)$ to be:
\[
\netX(M,T) = \min \big\{ \netX(\mc{H}) : \mc{H} \in \H(M,T)\big\}.
\]
We define $\netX(W;G)$ similarly, but minimize only over \emph{invariant} $\mc{H} \in \H(W,\nil)$. 
\end{definition}

Proposition \ref{basic fact} below ensures that $\netX(M,T)$ is well-defined and that there exists $\mc{H} \in \H(M,T)$ with $\netX(\mc{H}) = \netX(M,T)$. The proof of the next lemma follows easily from Lemma \ref{mvp preimage}.

\begin{lemma}\label{netX equiv}
Suppose that $W$ has quotient orbifold $(M,T)$. Then
\[
\netX(W;G) = |G|\netX(M,T).
\]
\end{lemma}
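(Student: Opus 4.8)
The plan is to reduce Lemma \ref{netX equiv} to Lemma \ref{mvp preimage} together with the multiplicativity of orbifold characteristic under finite covers, exactly as Lemma \ref{quotient info} reduces to Lemma \ref{mvp preimage} in the Heegaard case. First I would let $(M,T)$ be the quotient orbifold of $W$ and let $\rho \co W \to M$ be the quotient map, which is a regular orbifold covering of degree $|G|$ with $T' = \nil$ (that is, $W \to (M,T)$ is an orbifold cover with total space the manifold $(W,\nil)$). The two inequalities $\netX(W;G) \leq |G|\netX(M,T)$ and $\netX(W;G) \geq |G|\netX(M,T)$ are proved separately, each by transporting a minimizer along $\rho$.

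For the inequality $\netX(W;G) \leq |G|\netX(M,T)$: pick $\mc{H} \in \H(M,T)$ realizing $\netX(M,T)$, which exists by Proposition \ref{basic fact}. By Lemma \ref{mvp preimage}, the preimage $\tild{\mc{H}} = \rho^{-1}(\mc{H})$ is a multiple vp-bridge surface for $(W,\nil)$; it is equivariant because it is a full preimage, so it lies in the set over which $\netX(W;G)$ is minimized. Setting $\tild{\mc{H}}^{\pm} = \rho^{-1}(\mc{H}^{\pm})$ (the transverse orientation and hence the thick/thin labelling pulls back, and the dual digraph stays acyclic since the cover is a local diffeomorphism preserving the cobordism structure of complementary vp-compressionbodies), I would use that $\rho$ restricted to each component of $\tild{\mc{H}}^{\pm}$ is an orbifold covering of a component of $\mc{H}^{\pm}$, and that $\rho$ has degree $|G|$ overall, to get $\X(\tild{\mc{H}}^{\pm}) = |G|\,\X(\mc{H}^{\pm})$ by summing the relation $\X(R) = d\,\X(S)$ for finite orbifold covers $R \to S$ over all sheets. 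Hence $\netX(\tild{\mc{H}}) = |G|\netX(\mc{H})$, giving the desired inequality.

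For the reverse inequality $\netX(W;G) \geq |G|\netX(M,T)$: pick an equivariant $\mc{H}' \in \H(W,\nil)$ realizing $\netX(W;G)$. Being equivariant, $\mc{H}'$ is $G$-invariant as a set (the components not fixed setwise are permuted, with images disjoint), so it descends to an embedded surface $\mc{H} = \rho(\mc{H}') \subset (M,T)$ with $\rho^{-1}(\mc{H}) = \mc{H}'$. Here the work is to check $\mc{H} \in \H(M,T)$: the complementary pieces of $(M,T) \setminus \mc{H}$ are the quotients of the complementary pieces of $(W,\nil) \setminus \mc{H}'$ by their setwise stabilizers in $G$, and by the compressionbody-quotient argument in the proof of Lemma \ref{quotient info} (a finite orientation-preserving group action on a compressionbody has orbifold compressionbody quotient) each such quotient is a vp-compressionbody; the transverse orientation descends because $G$ is orientation-preserving, and acyclicity of the dual digraph is inherited. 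Then, as in the first part, $\X(\mc{H}'^{\pm}) = |G|\,\X(\mc{H}^{\pm})$ by multiplicativity under the degree-$|G|$ cover, so $\netX(W;G) = \netX(\mc{H}') = |G|\netX(\mc{H}) \geq |G|\netX(M,T)$.

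The main obstacle is the descent direction: verifying that an equivariant multiple vp-bridge surface in $W$ actually quotients to a genuine element of $\H(M,T)$ — in particular that each complementary piece is a vp-compressionbody after quotienting by a possibly nontrivial finite group. This is exactly the content that makes Lemma \ref{quotient info} nontrivial (it secretly uses the Smith Conjecture and the classification of finite group actions on handlebodies and products), so I would simply cite that lemma's proof rather than redo it; indeed the excerpt already signals this by saying ``the proof of the next lemma follows easily from Lemma \ref{mvp preimage},'' meaning all the genuinely hard input has been isolated upstream. The remaining bookkeeping — that the thick/thin decomposition and acyclic transverse orientation pass both up and down the cover, and that orbifold characteristic multiplies sheet-by-sheet — is routine given the observation $\X(R) = d\,\X(S)$ recorded in Section \ref{gn}.
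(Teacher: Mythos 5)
Your proof is correct and takes essentially the same approach as the paper: the paper dispatches this lemma with the single remark that it "follows easily from Lemma \ref{mvp preimage}," which is exactly the lift-and-descend argument you spell out, with the descent direction resting on the compressionbody-quotient analysis already carried out in the proof of Lemma \ref{quotient info} and the multiplicativity of $\X$ under finite orbifold covers. Your write-up simply makes the two inequalities explicit.
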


In \cite{TT1}, Taylor and Tomova defined a set of operations on elements of $\H(M,T)$ and used them to define a partial order called \defn{thins to} and denoted $\mc{J} \to \mc{H}$. A minimal element in the partial order is said to be \defn{locally thin}. The operations involved in the definition of the partial order are all versions of the traditional ``destabilization'' and ``weak reduction'' of Heegaard splittings of 3-manifolds and ``unperturbing'' of bridge surfaces for links. We do not need the precise definitions of the operations in this paper, but we do need the following information. For our purposes, we group the operations into four categories (deferring to \cite{TT1} for precise definitions):
\begin{itemize}
\item[(I)] destabilization, meridional destabilization, $\boundary$-destabilization, meridional $\boundary$-destabilization, ghost $\boundary$-destabilization, meridional ghost $\boundary$-destabilization;
\item[(II)] unperturbing, undoing a removable arc;
\item[(III)] consolidation;
\item[(IV)] untelescoping.
\end{itemize}

The next lemma summarizes the key aspects of these operations. Pay particular attention to the disc $D$ used in the operations (I). In Lemma \ref{no incr}, we will need to analyze the effect of compressing along this disc on net Heegaard  characteristic. 

\begin{lemma}[Taylor-Tomova]\label{move facts}
The following hold:
\begin{itemize}
\item All of the operations listed in (I) involve replacing a thick surface $J \cpt \mc{J}^+$ with a new thick surface $H$ such that $H$ is obtained from $J$ by compressing along a compressing disc or cut disc $D$ and, if $\boundary D$ separates $J$, discarding a component. The component that is discarded is parallel to a surface obtained by tubing together some components of $\boundary M$ and vertices of $T$ along edges of $T$ disjoint from $\mc{H}$. 
\item All of the operations in (II) remove two punctures from a thick surface $J \cpt \mc{J}^+$.
\item Consolidation removes a thick surface and a thin surface from $\mc{J}$ that together bound a product vp-compressionbody with interior disjoint from $\mc{J}$.
\item Untelescoping replaces a thick surface $J \cpt \mc{J}^+$ with two new thick surfaces $H_1$ and $H_2$ and creates additional thin surfaces $F$. These surfaces arise from a pair of disjoint sc-discs on opposite sides of $J$. $H_1$ and $H_2$ are each obtained (up to isotopy) by compressing along one of the discs and $F$ is obtained by compressing along both.
\end{itemize}
\end{lemma}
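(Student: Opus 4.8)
The statement is essentially a transcription into orbifold language of the corresponding facts recorded in \cite{TT1}, so the plan is twofold: recall the precise definition of each of the operations (I)--(IV) from \cite{TT1}, and then observe that the topological description asserted here is exactly what \cite{TT1} establishes, with nothing disturbed by the presence of edge weights.

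First, for the operations in category (I): each of destabilization, meridional destabilization, $\boundary$-destabilization, meridional $\boundary$-destabilization, ghost $\boundary$-destabilization, and meridional ghost $\boundary$-destabilization is defined in \cite{TT1} by exhibiting, after an isotopy, a compressing disc or cut disc $D$ for a thick surface $J \cpt \mc{J}^+$ whose interior misses $\mc{H}$, and replacing $J$ by the surface $H$ obtained by compressing along $D$ (discarding an inessential component when $\boundary D$ separates $J$). I would cite the specific lemmas of \cite{TT1} that identify the discarded component: in every case it is $\boundary$-parallel in the appropriate complementary vp-compressionbody, hence parallel to a surface obtained by tubing together some components of $\boundary M$ and vertices of $T$ along edges of $T$ disjoint from $\mc{H}$. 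The disc $D$ may be a cut disc of some weight, but this weight enters only the computation of $\X(H)$, not the topological claim, so the assertion carries over verbatim.

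Next, the operations in category (II): undoing a perturbation removes a canceling pair of bridge discs from a thick surface $J$, and undoing a removable arc removes a bridge arc together with its canceling disc; in either case $|J \cap T|$ drops by $2$ and $\mc{J}$ is otherwise unchanged. Category (III), consolidation, is by definition the removal of a thick surface and a thin surface that cobound a trivial product vp-compressionbody with interior disjoint from the rest of $\mc{J}$, so the bullet is immediate. Category (IV), untelescoping, is the weak reduction of \cite{TT1}: from a pair of disjoint sc-discs $D_1, D_2$ lying on opposite sides of a thick surface $J \cpt \mc{J}^+$, one forms $H_1$ and $H_2$ by compressing $J$ along $D_1$ and along $D_2$ respectively, and the new thin surface $F$ by compressing along both; this is exactly the description in \cite{TT1}.

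The one point that genuinely requires attention --- and which I expect to be the main (if modest) obstacle --- is that the operations of \cite{TT1} were set up for spatial graphs in manifolds with no orbifold hypothesis, so one must check that applying them to a nice orbifold $(M,T)$ produces a valid element of $\H(M,T)$, i.e.\ that niceness is preserved. Since every move either compresses along an sc-disc disjoint from the vertices of $T$ or slides along ghost edges joining existing vertices, no move alters the local model at any trivalent vertex, so the constraint $\X(v) < 0$ is automatically maintained; and the thin spheres produced by untelescoping are precisely the ones along which \cite{TT1} already guarantees surgery is valid, so no bad $2$-suborbifold or once-punctured sphere of infinite weight is introduced. With this verified, each bullet follows directly from the cited statements of \cite{TT1}.
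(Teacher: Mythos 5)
Your proposal is correct and matches the paper's treatment: the paper gives no independent proof of this lemma, stating it as a summary of the definitions and results of \cite{TT1} (which is exactly the unpacking you carry out), and your extra check that edge weights and the orbifold structure do not disturb the purely topological descriptions is the only point requiring comment, which you handle appropriately.
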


\begin{lemma}\label{no incr}
Suppose that $(M,T)$ is an orbifold. As an invariant on $\H(M,T)$, $\netX$ is non-increasing under the partial order $\to$.
\end{lemma}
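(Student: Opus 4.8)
The plan is to check, one category at a time, that each of the four kinds of operation from Lemma \ref{move facts} does not increase $\netX(\mc{H}) = \X(\mc{H}^+) - \X(\mc{H}^-)$; since the partial order $\to$ is generated by these operations, the result follows. The key quantitative input is the behaviour of $\X$ under compression along an sc-disc $D$, namely that compressing decreases $\X$ by $2/\weight(D) > 0$, together with the fact that discarding a component decreases $\X$ by that component's orbifold characteristic, which is $\geq 0$ for the surfaces in question (they arise by tubing together boundary components and vertices, all of which have $\X \geq 0$ by niceness, so the tubed surface also has $\X \geq 0$, as tubing does not decrease $\X$).

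For category (I): a thick surface $J$ is replaced by $H$, where $H$ is obtained from $J$ by compressing along a c-disc $D$ and possibly discarding a separated component $J_0$ that is parallel to a tubing-together of pieces of $\boundary M$ and vertices of $T$. Then $\X(H) = \X(J) - 2/\weight(D) - \X(J_0) \leq \X(J)$, and nothing happens to $\mc{H}^-$, so $\netX$ strictly decreases (or at worst stays the same if one is worried about $\weight(D)=\infty$; even then it does not increase). For category (II): two punctures are removed from a thick surface $J$, and $\chi(J)$ is unchanged, so $\X(J)$ drops by $(1-1/\weight) + (1-1/\weight') > 0$ for the two punctures removed (or by $2$ if both had infinite weight); again $\mc{H}^-$ is untouched, so $\netX$ decreases. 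For category (III), consolidation: a thick surface $J$ and a thin surface $F$ that cobound a product vp-compressionbody are both removed; since they cobound a product, $\X(J) = \X(F)$, so the contribution $\X(J)$ lost from $\X(\mc{H}^+)$ is exactly matched by the contribution $\X(F)$ lost from $\X(\mc{H}^-)$, and $\netX$ is unchanged.

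The main obstacle — and the one case needing genuine care — is category (IV), untelescoping. Here a thick surface $J$ is replaced by two thick surfaces $H_1, H_2$ and one new thin surface $F$, arising from a pair of disjoint sc-discs $D_1, D_2$ on opposite sides of $J$; $H_i$ is obtained by compressing $J$ along $D_i$, and $F$ by compressing along both. Thus $\X(H_1) = \X(J) - 2/\weight(D_1)$, $\X(H_2) = \X(J) - 2/\weight(D_2)$, and $\X(F) = \X(J) - 2/\weight(D_1) - 2/\weight(D_2)$ — provided no components are discarded; if the compressions separate and a component is thrown away, that only decreases $\X$ further, so the inequality is preserved, and one should remark on this. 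The change in $\netX$ is therefore
\[
\big(\X(H_1) + \X(H_2) - \X(J)\big) - \X(F) = \big(\X(J) - \tfrac{2}{\weight(D_1)} - \tfrac{2}{\weight(D_2)}\big) - \big(\X(J) - \tfrac{2}{\weight(D_1)} - \tfrac{2}{\weight(D_2)}\big) = 0,
\]
so $\netX$ is exactly preserved by untelescoping (when nothing is discarded, and does not increase otherwise). Assembling these four cases, $\netX$ is non-increasing along every generating move, hence along $\to$.
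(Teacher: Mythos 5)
Your treatment of the Type (I) moves has a genuine gap, and it sits exactly where the paper's proof does its real work. You justify $\X(J_0)\geq 0$ for the discarded component by saying it is parallel to a surface obtained by tubing together components of $\boundary M$ and vertices of $T$, ``all of which have $\X \geq 0$ by niceness.'' But vertices of $T$ do not have $\X \geq 0$: by the definition of a $3$-orbifold, every trivalent vertex $v$ satisfies $\X(v) < 0$, so the boundary of a regular neighborhood of a vertex is a spherical turnover of negative orbifold characteristic. (Niceness gives $\X \geq 0$ only for components of $\boundary M$.) Hence if the discarded component $J'$ is parallel to a single vertex of $T$ --- i.e.\ the tubing graph is an isolated vertex --- then discarding it \emph{increases} $\X(\mc{H}^+)$ by $|\X(v)| > 0$, and your inequality $\X(H) \leq \X(J)$ does not follow from anything you have said. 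The same unjustified ``discarding only decreases $\X$ further'' remark appears in your discussion of untelescoping, though there it is harmless since untelescoping discards nothing.

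The paper closes precisely this case using the extra information that $J'$ carries a scar of the disc $D$ one compressed along. Supposing for contradiction that $\X(H) > \X(J)$, i.e.\ that $2/\weight(D) + \X(J') < 0$, one first shows $J'$ must be a sphere and that the tubing graph cannot have an edge (two leaves, each meeting at least two vertical arcs, would give $J'$ at least four punctures and so $\X(J') \geq 0$); so $J'$ is parallel to a boundary sphere (ruled out by the hypothesis on $\boundary M$) or to a vertex with edge weights $a,b,c$. In the vertex case a weight count finishes: if $D$ is an unpunctured compressing disc the assumed inequality forces $1/a + 1/b + 1/c > 2$, impossible since all weights are at least $2$; if $D$ is a cut disc, its weight equals one of $a,b,c$ (its scar on $J'$ contains one of the three punctures), and the inequality forces $1/a + 1/b > 1$, again impossible. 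This scar-and-weight argument is the essential content of the lemma and is missing from your proposal; your handling of the moves of Types (II)--(IV) agrees with the paper's, where for untelescoping the exact identity $\X(J) = \X(H_1) + \X(H_2) - \X(F)$ already suffices.
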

\begin{proof}
This lemma follows fairly directly from Lemma \ref{move facts}. Suppose that one of the moves in a thinning sequence replaces a thick surface $J$ with another thick surface $H$. Consider, first, the possibility that the move was of Type (I). Let $D$ be the disc we compress along, as in Lemma \ref{move facts} .  If $\boundary D$ is non-separating on $J$, we have
\[
\X(H) = \X(J) -2 + 2(1- 1/\weight(D)) \leq \X(J).
\]
 Thus, in such a case, the move does not increase $\netX$. If $\boundary D$ separates $J$, recall that after compressing we discard one component $J'$ of the resulting surface. That is, $H \cup J'$ is the result of the compression of $J$. We have
 \[
 \X(H) = (\X(J) -1 + (1 - 1/\weight(D)) + (-\X(J') - 1 + (1 - 1/\weight(D)).
 \]
 Suppose, in order to obtain a contradiction, that $\X(H) > \X(J)$.  Then 
 \[
0 \geq - 1 + (1 - 1/\weight(D)) >  \X(J') \geq -\chi(J') + (1 - 1/\weight(D))
 \]
 where the last inequality follows from the fact that $J'$ contains a scar from the compression by $D$. Since $J'$ is a closed surface, it must be a sphere. We recall from Lemma \ref{move facts}, that it is parallel to a certain surface $S$ obtained by tubing together components of $\boundary M$ and vertices of $T$ along edges of $T$ disjoint from $\mc{H}$. Let $\Gamma$ be the graph with a vertex for each component of $\boundary M$ and each vertex of $T$ that goes into the creation of $S$ and with edges the edges we tube along. Since $J'$ is a sphere and is parallel to $S$, $\Gamma$ must be a tree and each component of $\boundary M$ that is a vertex of $T$ is a sphere. If $\Gamma$ has an edge, there are at least two leaves and, by our definition of orbifold, each must be incident to at least two vertical arcs, giving $S$ at least 4 punctures. But in that case $\X(J') \geq 0$, a contradiction. Thus, $\Gamma$ is an isolated vertex; that is, $J'$ is parallel to either a component of $\boundary M$ or to a vertex of $T$. If it is a component of $\boundary M$, then $\X(J') \geq 0$, by hypothesis. Thus, $J'$ is parallel to a vertex $v$ of $T$. The vertex $v$ is trivalent with incident edges having weights $a,b,c$ and
 \[
0 <  -\X(J') - 1 + (1 - 1/\weight(D)) = -1 + 1/a + 1/b + 1/c - 1/\weight(D). 
  \]
 If $\weight(D) = 1$, then we have a contradiction. If $\weight(D) \neq 1$, then $D$ was a cut disc and so one of $a,b,c$ is equal to $\weight(D)$. Thus, in this case also, we have a contradiction. We conclude that $\X( H) \leq \X(J)$ and that none of the moves of Type (I) increase $\netX$. 
 
 The moves of Type (II) remove two punctures from a thick surface and so cannot increase $\netX$. If $H \cpt \mc{H}^+$ and $F \cpt \mc{H}^-$ are parallel, then $\X(H) = \X(F)$ and so consolidation does not change $\netX$. Finally, consider the operation of untelescoping. The three new surfaces $H_1$, $H_2$, and $F$ (as in the statement of Lemma \ref{move facts}) are all obtained by compressions along sc-discs for $J$. An easy computation shows that $\X(J) = \X(H_1) + \X(H_2) - \X(F)$ and so untelescoping also leaves $\netX$ unchanged.
\end{proof}

The next theorem is key to our endeavors. We have stated only what we need for this paper. We say that $\mc{H}^+$ is \defn{sc-strongly irreducible} if it is not possible to untelescope it (i.e. use move (IV) above).

\begin{theorem}[{Taylor--Tomova}]\label{locally thin}
Suppose that $T$ is a spatial graph in an orientable 3-manifold $M$ such that no component of $\boundary M$ is a sphere with two or fewer punctures and there is no once-punctured sphere in $(M,T)$. Then for every $\mc{J} \in \H(M,T)$ there exists a locally thin $\mc{H} \in \H(M,T)$ such that $\mc{J} \to \mc{H}$. Furthermore, for any locally thin $\mc{H}$ the following hold:
\begin{enumerate}
\item Each component of $\mc{H}^+$ is sc-strongly irreducible in $(M,T)\setminus \mc{H}^-$.
\item $\mc{H}^-$ is c-essential in $(M,T)$;
\item If $(M,T)$ contains a c-essential sphere with 3 or fewer punctures, then so does $\mc{H}^-$;
\item If $(M,T)$ is irreducible and if a component of $\mc{H}^+$ is a sphere with three or fewer punctures then $\mc{H} = \mc{H}^+$;
\item If $(C, T_C) \cpt (M,T)\setminus \mc{H}$ is a trivial product compressionbody, then $\boundary_- C \subset \boundary M$.
\end{enumerate}
\end{theorem}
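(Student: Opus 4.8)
The plan is to obtain Theorem~\ref{locally thin} as the orbifold version of the thin position results of \cite{TT1,TT2}, carrying their arguments over essentially verbatim and using Lemmas~\ref{mvp preimage}, \ref{move facts}, and \ref{no incr} as the orbifold replacements for their combinatorial inputs. There are two tasks: first, show that the relation $\to$ is well-founded, so that every $\mc{J}\in\H(M,T)$ thins to a locally thin $\mc{H}$; second, deduce properties (1)--(5) from the fact that no move of types (I)--(IV) applies to a locally thin $\mc{H}$. Throughout, the weight bookkeeping is the one already appearing in the proof of Lemma~\ref{no incr}, and the hypotheses ``no boundary sphere with two or fewer punctures'' and ``no once-punctured sphere'', together with the condition $\X(v)<0$ at trivalent vertices, get used in the same places they do there.

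For well-foundedness I would, following \cite{TT1}, assign to each $\mc{H}\in\H(M,T)$ a complexity whose leading term is $\netX(\mc{H})$, which never increases under a move by Lemma~\ref{no incr}, followed by auxiliary terms that strictly decrease under the moves that leave $\netX$ fixed: the decreasing-sorted list of $\X$ of the components of $\mc{H}^+$ drops lexicographically under untelescoping, since $\X(J)=\X(H_1)+\X(H_2)-\X(F)$ with $\X(F)\geq 0$; the number of punctures drops under the moves of type~(II); and the number of components of $\mc{H}$ drops under consolidation. Ordered lexicographically these land in a well-ordered set, so a locally thin $\mc{H}$ with $\mc{J}\to\mc{H}$ exists, and by minimality such an $\mc{H}$ admits none of the moves. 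This already yields (1): no component of $\mc{H}^+$ can be untelescoped in $(M,T)\setminus\mc{H}^-$. It also yields (5): if a trivial product compressionbody $(C,T_C)\cpt(M,T)\setminus\mc{H}$ had $\boundary_-C\not\subset\boundary M$, then $\boundary_-C$ would contain a component of $\mc{H}^-$ cobounding a product with $\boundary_+C$, so consolidation would apply.

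For (4): if a component $H_0\cpt\mc{H}^+$ is a sphere with at most three punctures, each of the two vp-compressionbodies meeting $H_0$ along $\boundary_+$ has positive boundary a sphere with at most three punctures; as in \cite{TT1}, a complete collection of sc-discs for such a compressionbody must be empty, since any nontrivial $\boundary$-reduction would split off a sphere with zero or one punctures, the latter excluded by hypothesis and the former incompatible with completeness. Hence each is a trivial ball compressionbody with empty negative boundary, so $H_0$ bounds a component of $M$ and $\mc{H}^-=\nil$. For (2): if a component $F\cpt\mc{H}^-$ admitted a c-disc, it would lie in one of the two adjacent vp-compressionbodies, and, exactly as in \cite{TT1}, combining it with the handle structure produces one of the $\boundary$-destabilization moves of type~(I) --- the relevant $\X$ change being $2/\weight(D)$, and the case where the discarded surface is parallel to a vertex of $T$ handled as in Lemma~\ref{no incr}; if instead $F$ were $\boundary$-parallel or an inessential unpunctured sphere, an adjacent compressionbody would be trivial and consolidation would apply. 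Either way local thinness is contradicted, so $\mc{H}^-$ is c-essential.

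For (3), finally: suppose $(M,T)$ contains a c-essential sphere $S$ with at most three punctures, isotoped transverse to $\mc{H}$ with $|S\cap\mc{H}|$ minimal. If $S\cap\mc{H}^+=\nil$ then $S$ lies in one vp-compressionbody $C$, where a c-essential sphere must be parallel to a component of $\boundary_-C\subset\mc{H}^-\cup\boundary M$; it cannot be parallel to a component of $\boundary M$ (that would make $S$ $\boundary$-parallel), so it is parallel to a component of $\mc{H}^-$, which is then a c-essential sphere with at most three punctures by (2). If $S$ does meet $\mc{H}^+$, I would run the standard innermost/outermost disc argument: a component of $S$ cut along $\mc{H}$ that is innermost on $S$ is an sc-disc or semi-disc for some thick surface $H_0$, and, using that $H_0$ is sc-strongly irreducible in $(M,T)\setminus\mc{H}^-$ together with the acyclic transverse orientation, one shows such discs can be used to lower $|S\cap\mc{H}|$, a contradiction. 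I expect this last step to be the main obstacle: making it precise amounts to reproving the \cite{TT1,TT2} analysis of how a c-essential surface meets an sc-strongly irreducible multiple vp-bridge surface, now keeping track of the punctures on $S$ and carefully distinguishing compressing discs from cut discs in the weighted setting; everything else is bookkeeping on top of the lemmas already established.
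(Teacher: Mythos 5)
There is a genuine gap, and also a mismatch in strategy worth noting. The paper does not re-derive the thin position machinery at all: the statement is about a spatial graph $T\subset M$ and the notions involved ($\H(M,T)$, the partial order $\to$, sc-strong irreducibility, c-essentiality) do not depend on the edge weights, so the theorem is quoted directly from Taylor--Tomova --- existence of a locally thin refinement is \cite[Theorem 6.17]{TT1}, conclusions (1), (2), (5) are conclusions of \cite[Theorem 7.6]{TT1}, conclusion (3) is \cite[Theorem 8.2]{TT1}, and only (4) requires a remark (it is proved as in \cite[Theorem 6.17(5)]{TT2}). Your plan instead reproves this machinery ``in the orbifold setting,'' which is both unnecessary and, as written, not complete.

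Two concrete problems. First, your well-foundedness argument is wrong where it is specific: you justify the lexicographic drop under untelescoping by ``$\X(J)=\X(H_1)+\X(H_2)-\X(F)$ with $\X(F)\geq 0$,'' but $\X(F)\geq 0$ fails exactly in the situations this paper is built around --- the new thin surface can be an unpunctured or $\leq 3$-punctured sphere with $\X(F)<0$ (this is how summing spheres appear in $\mc{H}^-$ in Corollary \ref{thin sum}). The drop of the sorted list can instead be argued from $\X(H_i)=\X(J)-2/\weight(D_i)$, but that is not strict when a cut disc meets an infinite-weight edge, and in that case none of your auxiliary terms (punctures, component count) decreases either, while untelescoping increases the number of components; so the proposed complexity does not establish termination of $\to$ in the generality you need. (This is why TT1's termination argument uses a different, weight-independent complexity.) Second, conclusions (2) and (3) are not actually proved. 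A c-disc for a thin surface $F\cpt\mc{H}^-$ has interior disjoint from $F$ but will in general intersect the thick surface of the adjacent region, so it does not directly ``combine with the handle structure'' to give a Type (I) move; the real argument has to isotope the disc, use sc-strong irreducibility of the thick surfaces, and handle the resulting cases. For (3) you say explicitly that making the innermost/outermost argument precise ``amounts to reproving the \cite{TT1,TT2} analysis'' and you do not carry it out; but that analysis is precisely the content of the theorem, so deferring it leaves the proof incomplete. The efficient fix is the paper's: observe the statement is weight-independent and cite \cite{TT1} and \cite{TT2}, supplying only the short argument for (4).
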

\begin{proof}
We note that in \cite{TT1} saying that $T$ is irreducible, by definition, means that $(M,T)$ does not contain a once-punctured sphere. The existence of $\mc{H}$ given $\mc{J}$ is \cite[Theorem 6.17]{TT1}. Conclusions (1), (2),  and (5) can be found as Conclusions (2), (4) and (3) of \cite[Theorem 7.6]{TT1}, respectively. Conclusion (3) follows from \cite[Theorem 8.2]{TT1}. Conclusion (4) is similar to the proof of Conclusion (5) of \cite[Theorem 7.6]{TT1}. The details are similar to some of the arguments that follow, so we omit them here. 
\end{proof}

Examining the proof of Conclusion (3) in Theorem \ref{locally thin} provides us with more information about orbifolds. In particular, it produces another proof of the existence of systems of summing spheres. By Theorem \ref{factor uniqueness}, this also implies that efficient systems of summing spheres exist. The proof is nearly identical to that of \cite[Theorem 8.2]{TT1} and \cite[Proposition 5.1]{TT2}. Since the argument is completely topological, it is also the case that if $\mc{H}$ is locally thin, then $\mc{H}^-$ contains a collection of turnovers cutting $(M,T)$ into suborbifolds that contain no essential turnovers.
 For convenience, we provide the proof in the Appendix.

\begin{corollary}\label{thin sum}
Suppose that $(M,T)$ is a nice orbifold and let $\mc{H} \in \H(M,T)$ be locally thin. Then $\mc{H}^-$ contains a system of summing spheres for $(M,T)$.
\end{corollary}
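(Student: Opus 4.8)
The plan is to extract from the locally thin $\mc{H}$ a subsurface $S \subseteq \mc{H}^-$ consisting of spheres with at most three punctures and to verify it satisfies all six defining conditions of an efficient system of summing spheres. First I would invoke Theorem \ref{locally thin}: since $(M,T)$ is nice, no component of $\boundary M$ is a sphere with two or fewer punctures, and $(M,T)$ has no once-punctured sphere, so the hypotheses are met and $\mc{H}^-$ is c-essential in $(M,T)$ (Conclusion (2)). Because $(M,T)$ is orbifold-reducible --- which we may assume, since otherwise there is nothing to prove and the empty system works --- Conclusion (3) gives that $\mc{H}^-$ contains a c-essential sphere with three or fewer punctures. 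Let $S$ be the union of all components of $\mc{H}^-$ that are spheres with at most three punctures; each such component has $\X \le 0$ by the niceness hypothesis (condition (3) of ``nice'' forces nonseparating spheres to have $\X \ge 0$, but a c-essential sphere with $\le 3$ punctures and $\X \ge 0$ would be unpunctured or have two equal-weight punctures, hence either bound a ball or be compressible --- so in fact all such components are separating and I should phrase the selection carefully, taking exactly the separating spheres with $\le 3$ punctures). Those components are automatically c-essential as components of a c-essential surface.

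Next I would check the three conditions of ``system of summing spheres.'' Condition (2), $\X(S_0) \le 0$ for each $S_0 \cpt S$, is built into the selection. For condition (3), no component of $(M,T)|_S$ is orbifold-reducible: I would argue that any c-essential sphere $S'$ with $\X(S') < 0$ in a component of $(M,T)|_S$ either persists as a c-essential sphere in $(M,T)$ disjoint from the already-chosen spheres (contradicting, by a minimality or exhaustion argument, that we took \emph{all} qualifying components of $\mc{H}^-$, since by Conclusion (3) applied to that component relative to $\mc{H}^-$ restricted there, it would have to show up in the thin surfaces), or it can be capped off by scars. This is the step I expect to be the main obstacle: relating reducibility of the cut-open pieces back to the thin surfaces of $\mc{H}$ requires care, and is essentially where the content of the proof of Conclusion (3) of Theorem \ref{locally thin} gets reused --- the statement says ``examining the proof'' precisely because one needs that $\mc{H}^-$ captures \emph{enough} essential spheres, not just one. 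I would likely argue by applying Theorem \ref{locally thin} to each component of $(M,T)|_S$ equipped with the induced multiple vp-bridge structure, or invoke the fact that $\mc{H}$ restricted to such a component is again locally thin, so if that component were orbifold-reducible its thin surface would contain yet another such sphere, which would lift to an extra qualifying thin sphere of the original $\mc{H}$ not in $S$ --- contradiction. For condition (1), that the dual graph of $S$ is a tree: since every sphere in $(M,T)$ separates (niceness condition (3) plus the fact that $(M,T)$ has no nonseparating spheres of negative $\X$, and the argument in Lemma \ref{no incr} showing the discarded spheres are tame), the dual graph of any collection of disjoint separating spheres in a connected manifold is a tree, after possibly discarding redundant parallel copies.

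Finally I would verify efficiency, conditions (4)--(6). Condition (4), that no component of $(M,T)|_S$ is an $\mathbb{S}(0)$: a component that is $(S^3, \nil)$ with scars would, before capping, be a punctured $S^3$ bounded by spheres of $\mc{H}^-$, but then those thin spheres would be parallel in $(M,T)$ or bound balls --- contradicting that $\mc{H}^-$ is c-essential (a sphere bounding a ball disjoint from $T$ is excluded from c-essential surfaces). Conditions (5) and (6) --- no $\mathbb{S}(2)$ with a twice-punctured scar, no $\mathbb{S}(3)$ with a thrice-punctured scar --- I would handle similarly: such a configuration would exhibit a component of $\mc{H}^-$ that is $\boundary$-parallel or parallel into $\boundary(M \setminus T)$ within the capped-off piece, again contradicting c-essentiality; alternatively, one of these would force a perturbed or removable configuration on an adjacent thick surface contradicting local thinness. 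If any of (4)--(6) genuinely fail for the raw $\mc{H}^-$, the fallback is to discard the offending components of $S$ one at a time and observe that the remaining collection still cuts $(M,T)$ into orbifold-irreducible pieces and still has a tree dual graph, so that after finitely many reductions we reach an efficient system; but I expect the locally thin hypothesis to rule out the bad cases directly. Throughout, the niceness of $(M,T)$ is used exactly to guarantee $\X \ge 0$ on the relevant sphere boundary components and to guarantee separability, both of which are needed for the tree condition and for the characteristic bookkeeping.
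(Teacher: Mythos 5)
Your plan diverges from the paper's proof at exactly the step you flag as the main obstacle, and that step does not close. Two concrete problems. First, your selection of $S$ is off: a thrice-punctured sphere with weights $(a,b,c)$ has $\X = 1 - (1/a+1/b+1/c)$, which is \emph{positive} for, say, $(2,3,7)$, and such hyperbolic turnovers can perfectly well occur as c-essential components of $\mc{H}^-$ (the remark after the corollary treats them separately). So ``all separating $\le 3$-punctured sphere components of $\mc{H}^-$'' need not satisfy condition (2); your claim that $\X \ge 0$ would force the sphere to be unpunctured or twice-punctured is backwards, since those are exactly the cases with $\X \le 0$. Second, and more seriously, your verification of condition (3) rests on applying Conclusion (3) of Theorem \ref{locally thin} to $\mc{H}$ restricted to a component of $(M,T)|_S$, ``since the restriction is again locally thin.'' That is not justified and is false in general: cutting along thin spheres and capping with trivial balls can make remaining thin surfaces inessential or make consolidations possible (the paper's own Lemma \ref{verify min} exhibits locally thin splittings with several parallel thin spheres, which after cutting produce trivial $\mathbb{S}(2)$ pieces). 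Moreover, even where Conclusion (3) applies, it only produces \emph{some} c-essential sphere with at most three punctures, with no control on its orbifold characteristic, so your exhaustion argument does not reach a contradiction once you have (as you must, for condition (2)) excluded positive-$\X$ turnovers from $S$.

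What is actually needed here — and what the paper's proof supplies — is an innermost-disc sweeping argument: given an essential summing sphere, compress it along discs of $\mc{H}^-$ and then along a complete disc system of a compressionbody, checking at each stage that the number of punctures does not increase and $\X$ does not increase, until the sphere lies in a trivial compressionbody and is therefore parallel to a component of $\mc{H}^-$. Crucially, this argument uses only that the thin surfaces are c-incompressible and the thick surfaces are sc-strongly irreducible, properties that \emph{do} persist after cutting along thin spheres (unlike local thinness), which is why the paper can iterate it: first split along all unpunctured thin spheres, then twice-punctured, then thrice-punctured, with the ordering handling the weight/puncture bookkeeping (compressing along a once-punctured disc can spin off a twice-punctured companion). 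Your treatment of efficiency (4)--(6) also overreaches — parallel thin summing spheres are compatible with c-essentiality and local thinness, so these conditions can genuinely fail for the raw collection — but there your fallback of discarding offending spheres is essentially what the paper does when it passes to a subset of $P$ at the end. The missing content is the sweeping argument establishing that the cut-open pieces contain no essential summing spheres; deferring it to Theorem \ref{locally thin}(3) does not work.
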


\begin{theorem}\label{additivity}
Suppose that $(M,T)$ is a nice orbifold. Let $S \subset (M,T)$ be an efficient system of summing spheres. Then 
\[
\netX(M,T) \geq \netX((M,T)|_S) - \X(S).
\]
If each component of $S$ is separating, then equality holds.
\end{theorem}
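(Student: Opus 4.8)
The plan is to prove the two inequalities separately. For the inequality $\netX((M,T)|_S) - \X(S) \leq \netX(M,T)$, I would start with a locally thin $\mc{H} \in \H(M,T)$ realizing $\netX(M,T)$ (which exists by Theorem \ref{locally thin}, applicable since $(M,T)$ is nice, hence has no once-punctured spheres and no sphere boundary components with $\leq 2$ punctures). By Corollary \ref{thin sum}, after possibly replacing $S$ by a homeomorphic efficient system (legitimate by Theorem \ref{factor uniqueness}, since $\netX$ depends only on the orbifold homeomorphism type of $(M,T)|_S$), we may assume $S \subset \mc{H}^-$. Then $\mc{H} \setminus S$ restricts to an element $\mc{H}_i$ of $\H$ for each component of $(M,T)|_S$, with the scars absorbed into $\boundary_-$ of the adjacent compressionbodies. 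Summing $\netX$ over these components, each sphere $S_0 \cpt S$ was counted once in $\X(\mc{H}^-)$ but now contributes to $\boundary_-$ of two compressionbodies in the cut-open picture, i.e. it gets counted twice in $\sum_i \X(\mc{H}_i^-)$, so $\sum_i \netX(\mc{H}_i) = \netX(\mc{H}) + \X(S)$. Hence $\netX((M,T)|_S) \leq \sum_i \netX(\mc{H}_i) = \netX(M,T) + \X(S)$, which rearranges to the desired inequality.

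For the reverse inequality $\netX(M,T) \leq \netX((M,T)|_S) - \X(S)$, I would go the other direction: take locally thin (or merely $\netX$-minimizing) multiple vp-bridge surfaces $\mc{H}_i$ for each component $(M_i, T_i)$ of $(M,T)|_S$ realizing $\netX((M_i,T_i))$, and reassemble. Each summing sphere $S_0$ of $S$ corresponds to gluing a scar in one $\mc{H}_i^-$ to a scar in another $\mc{H}_j^-$; after regluing, the scar becomes an honest thin sphere $S_0$ in $\mc{H}^-$, so we form $\mc{H} = (\bigsqcup_i \mc{H}_i) \cup S$. We must check that $\mc{H}$ is a genuine oriented multiple vp-bridge surface: the compressionbody structure persists (gluing two trivial ball compressionbodies containing cones on the scar punctures along that scar sphere yields a vp-compressionbody, or we simply observe the scar-sphere is a new $\boundary_-$ component), the dual digraph remains acyclic because the dual graph to $S$ is a tree (this is where condition (1) in the definition of system of summing spheres is used — it guarantees we can orient consistently), and $\mc{H}^- \cup \boundary M$ still equals $\bigcup \boundary_- C$. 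The arithmetic is the mirror of the previous paragraph: $\netX(\mc{H}) = \sum_i \netX(\mc{H}_i) - \X(S)$, giving $\netX(M,T) \leq \netX(\mc{H}) = \netX((M,T)|_S) - \X(S)$.

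The main obstacle I expect is the bookkeeping around the scars and the definition of ``nice''/efficient. In the first direction, after cutting along $S \subset \mc{H}^-$, one must be sure the restricted surfaces still satisfy all the hypotheses needed to even live in $\H((M_i,T_i))$ — in particular that no new sphere boundary component with few punctures is created and that the result is still a valid orbifold; this is exactly what conditions in the definition of \emph{nice} and the requirement $\X(S_0) \leq 0$ (so $|S_0 \cap T| \leq 3$, and the scars are cones on $\leq 3$ points) are designed to ensure. In the second direction, the subtlety is that an arbitrary $\netX$-minimizer for a factor need not have the scar sphere sitting inside $\mc{H}_i^-$; I would handle this by invoking the locally thin version and Corollary \ref{thin sum}/Theorem \ref{locally thin}(3) to arrange each $\mc{H}_i$ so that the $\boundary$-parallel scar sphere is realized as a thin surface (or is already $\boundary_-$), together with a consolidation argument to absorb any product compressionbody trapped between a scar and a parallel thin surface. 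Once the scar spheres are correctly positioned as thin surfaces on both sides of the cut, the two computations of $\netX$ are routine applications of additivity of $\X(\cdot)$ over disjoint unions plus the observation that compressing/cutting changes $\X$ in the controlled way recorded in Lemma \ref{no incr}.
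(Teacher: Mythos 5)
Your overall strategy is the paper's: cut a locally thin minimizer along the efficient summing spheres supplied by Corollary \ref{thin sum} for one inequality, and reassemble minimizers of the factors with $S$ as new thin spheres (using the tree condition to orient) for the other. Two of your justifications, however, do not hold up as written. First, in the cutting direction your bookkeeping is inconsistent: after splitting along $S \subset \mc{H}^-$, the scar spheres become boundary components (or are capped off by trivial ball compressionbodies that get absorbed into the adjacent compressionbodies), and boundary components simply do not appear in $\netX$ --- they are not part of $\mc{H}_i^-$, so each component of $S$ is counted \emph{zero} times after cutting, not twice. Had it really been counted twice you would get $\sum_i \netX(\mc{H}_i) = \netX(\mc{H}) - \X(S)$, contradicting the formula you then use; the correct identity $\sum_i \netX(\mc{H}_i) = \netX(\mc{H}) + \X(S)$ holds precisely because $S$ drops out of the thin part entirely. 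Second, you assume from the start that $\netX(M,T)$ and each $\netX(M_i,T_i)$ are \emph{realized} by elements of $\H$; this is not automatic, since $\netX$ is real-valued once weights enter, and Theorem \ref{locally thin} only produces a locally thin element below a given one. The paper invokes Proposition \ref{basic fact} (integrality of $2L\netX$ plus a lower bound) exactly to guarantee attainment, and nothing in your write-up plays that role.

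In the reassembly direction, the subtlety you flag is real, but your proposed fix is the wrong mechanism: the scar spheres are inessential in the factors $(M_i,T_i)$ (they bound the capping balls), so they can never appear among the thin surfaces of a locally thin $\mc{H}_i$ --- by Theorem \ref{locally thin}(2) thin surfaces are c-essential --- and Corollary \ref{thin sum} only yields essential summing spheres, which the factors do not have. The correct (and simpler) resolution, which is the paper's and which your own remark that ``the scar-sphere is a new $\boundary_-$ component'' already points toward, is to mark the points at which the sums are performed, isotope each $\mc{H}_i$ off the marked points by transversality, and observe that deleting a small ball about a marked point (possibly centered on an edge or trivalent vertex of $T_i$) from the interior of the vp-compressionbody containing it again yields a vp-compressionbody, with the small sphere added to $\boundary_-$; gluing the factors along these spheres makes each component of $S$ an honest thin surface, and reversing the orientations of some $\mc{H}_i$ (possible because the dual graph of $S$ is a tree, and harmless since $\netX$ is unchanged) keeps the dual digraph acyclic. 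With those repairs your computation $\netX(\mc{H}) = \sum_i \netX(\mc{H}_i) - \X(S)$ goes through and the argument coincides with the paper's.
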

\begin{proof}
We first show that $\netX(M,T) \geq \netX((M,T)|_S) - \X(S)$. Choose $\mc{J} \in \H(M,T)$ such that $\netX(\mc{J}) = \netX(M,T)$. This is possible by Proposition \ref{basic fact} below. By Theorem \ref{locally thin} and Lemma \ref{no incr}, there exists a locally thin $\mc{H} \in \H(M,T)$ with $\mc{J} \to \mc{H}$ and $\netX(\mc{H}) = \netX(M,T)$.  By Corollary \ref{thin sum}, there exists an efficient set of summing spheres $S \subset \mc{H}^-$. As we remarked, $S$ is unique up to orbifold homeomorphism, as is $(M,T)|_S$. When we split $(M,T)$ open along $S$, each component of $S$ is converted to two boundary components of $(M,T) \setminus S$. Boundary components are not included in the sum in the definition of $\netX$ and capping them off with trivial ball compressionbodies does not change that. The result follows.

When each component of $S$ is separating, the proof that $\netX(M,T) \leq \netX((M,T)|_S) - \X(S)$ is nearly identical to that of \cite[Theorem 5.5]{TT2}. In each component of $(M,T)|_S$ mark the points where sums will be performed. Since each component of $S$ is separating, the dual graph to $S$ is a tree. In each component $(M_i, T_i)$ of $(M,T)|_S$ choose $\mc{H}_i \in \H(M_i, T_i)$ such that $\netX(\mc{H}_i) = \netX(M_i, T_i)$. Again this is possible by Proposition \ref{basic fact} below. By transversality we may also assume each $\mc{H}_i$ is disjoint from the marked points. As in the proof of \cite[Theorem 5.5]{TT2}, we may reverse orientations on the $\mc{H}_i$ as necessary to ensure that their union with $S$ is an oriented multiple orbifold Heegaard surface for $(M,T)$. The desired inequality follows. 
\end{proof}

\begin{theorem}\label{net equiv add}
There exists an invariant system of summing spheres $S \subset W$ such that
\[
\netX(W;G) \geq \netX(W|_S; G) - 2|S|,
\]
and $W|_S$ is irreducible. If each component of $S$ is separating, then equality holds.
\end{theorem}
\begin{proof}
If $W$ is not orbifold-reducible, then the theorem is vacuously true. Otherwise, it is orbifold-reducible. By the Equivariant Sphere Theorem, the quotient orbifold $(M,T)$ is orbifold-reducible. We verify that $(M,T)$ is nice. Note that as $T$ is the singular set, no edge has infinite weight. Let $P \cpt \boundary M$ be a 2-sphere. If $\X(P) < 0$, then its pre-image in $\boundary W$ is the union of essential spheres, but there are none. Thus, $\X(P) \geq 0$. Since $(M,T)$ is covered by a manifold, there are no bad 2-suborbifolds. Consequently, $(M,T)$ is nice.

By Lemma \ref{netX equiv}, $\netX(W;G) = |G|\netX(M,T)$. By Theorem \ref{additivity}, there is an efficient system of summing spheres $\ob{S}$ for $(M,T)$ such that $\netX(M,T) \geq \netX((M,T)|_{\ob{S}})$ and equality holds if every component of $\ob{S}$ is separating. Let $S$ be the lift of $\ob{S}$ to $W$. If a component of $\ob{S}$ is non-separating, each component of its preimage in $W$ would be non-separating.

We have:
\[
\netX(W;G) = |G|\netX(M,T) \geq |G| \netX((M,T)|_{\ob{S}}) - |G|\X(\ob{S}) = \netX(W|_{S};G) - \X(S).
\]
If each component of $S$ is separating, then so is each component of $\ob{S}$ and equality holds. Let $W'$ be a component of $W|_S$. Its image in $(M,T)$ is a component of $(M,T)|_S$. If $W'$ were orbifold-reducible, then its image would be also, by the Equivariant Sphere Theorem. But this contradicts the fact that $\ob{S}$ is an efficient system of summing spheres. 

Suppose that some $S_0 \cpt S$ is inessential. Then it bounds a 3-ball $B \subset W$. Without loss of generality, we may assume that $S_0$ is innermost; i.e. the interior of $B$ is disjoint from $S$. The image of $B$ in $(M,T)$ is then the quotient of $B$ by its stabilizer. By \cite{Zimmermann96}, it is a trivial ball compressionbody and its boundary is inessential. This contradicts the fact that $\ob{S}$ is efficient. Thus, each component of $S$ is essential. \end{proof}

\begin{remark}
The proof of Theorem \ref{additivity} demonstrates the advantage that invariant multiple vp-bridge surfaces have over invariant Heegaard surfaces. Although there is no guarantee that if $W$ is reducible there is an equivariant sphere intersecting a minimal equivariant Heegaard splitting in a single closed loop, we can guarantee that there is an equivariant sphere showing up as a thin surface in an equivariant generalized Heegaard splitting of $W$. 
\end{remark}

\section{Lower bounds}\label{sec: lower}
The main purpose of this section is to find lower bounds on $\netX(M,T)$ for an orbifold $(M,T)$ and use that to prove Theorems \ref{counting lower bd} and \ref{thm: equiv compare small}. Along the way, we prove Proposition \ref{basic fact} which guarantees that $\netX(M,T)$ is well-defined and that there exists $\mc{H} \in \H(M,T)$ with $\netX(\mc{H}) = \netX(M,T)$. 

\subsection{Analyzing orbifold compressionbodies}

\begin{definition}
A \defn{lens space} is a closed 3-manifold of Heegaard genus 1, other than $S^3$ or $S^1 \times S^2$. A \defn{core loop} in a solid torus $D^2 \times S^1$ is a curve isotopic to $\{\text{point}\} \times S^1$. A \defn{core loop} in a lens space or $S^1 \times S^2$ is a knot isotopic to the core loop of one half of a genus 1 Heegaard splitting. A \defn{Hopf link} in $S^3$, $S^1 \times S^2$, or a lens space is a 2-component link such that there is a Heegaard torus separating the components and so that each component is a core loop for the solid tori on opposite sides of a Heegaard torus. A \defn{pillow} $(C, T_C)$ is a vp-compressionbody with boundary a 4-punctured sphere that is the result either of joining two (3-ball, arc) trivial ball compressionbodies by an unweighted 1-handle or joining two (3-ball, trivalent graph) compressionbodies by a weighted 1-handle. (See Figure \ref{fig: pillows}.) An orbifold that is a pillow or trivial ball compressionbody is \defn{Euclidean} if the boundary surface is. A \defn{Euclidean double pillow} is a pair $(S^3, T)$ with an orbifold bridge surface $H$ such that each of $(S^3, T)\setminus H$ is a Euclidean pillow. (The terminology stems from the fact that Euclidean orbifolds admit a complete metric locally modeled on Euclidean 2 or 3-space.)
\end{definition}

\begin{figure}[ht!]
\includegraphics[scale=0.8]{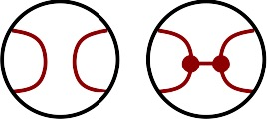}
\caption{The two types of pillow.}
\label{fig: pillows}
\end{figure}

If $(C, T_C)$ is the disjoint union of orbifold compressionbodies,  let $N(C,T_C) = \X(\boundary_+ C) - \X(\boundary_- C)$. Our key identity for an multiple orbifold Heegaard surface $\mc{H} \in \H(M,T)$ is:
\begin{equation}\label{Fundamental}
2\netX(M,T) - \X(\boundary M) = \sum\limits_{(C, T_C)} N(C, T_C).
\end{equation}
where we sum over the vp-compressionbodies $(C, T_C) \cpt (M,T) \setminus \mc{H}$. This follows immediately from the fact that each component of $\mc{H}^+$ and each component of $\mc{H}^-$ appears exactly twice as a boundary component of $(M,T)\setminus \mc{H}$.

\begin{lemma}\label{low N}
Suppose that $(C, T_C)$ is an orbifold compressionbody with no component of $\boundary_- C$ a once-punctured sphere. If $N(C, T_C) < 0$, then $(C, T_C)$ is a trivial ball compressionbody. Also, if $N(C, T_C) = 0$ and $\boundary_- C = \nil$, then $(C, T_C)$ is one of:
\begin{enumerate}
\item Euclidean trivial ball compressionbody 
\item Euclidean pillow
\item (solid torus, $\nil$)
\item (solid torus, core loop)
\end{enumerate}
\end{lemma}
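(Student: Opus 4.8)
The strategy is to analyze an orbifold compressionbody $(C, T_C)$ via a complete collection of sc-discs $\Delta$, as in the definition: $\boundary$-reducing along $\Delta$ yields a disjoint union of trivial ball compressionbodies and trivial product compressionbodies. I would track how $N(C,T_C) = \X(\boundary_+ C) - \X(\boundary_- C)$ behaves under this decomposition and then read off the constraints imposed by $N \leq 0$.

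\textbf{Step 1: the sign of $N$ and the effect of compressions.} First I would record that compressing $\boundary_+ C$ along an sc-disc $D$ decreases $\X(\boundary_+ C)$ by $2/\weight(D) \le 1$ (this is stated in the excerpt), and that if $\boundary D$ is nonseparating the new surface is connected while if it separates we get two components. Performing all the compressions in $\Delta$ converts $\boundary_+ C$ into a collection of closed surfaces, each of which is either a boundary sphere of a trivial ball compressionbody (possibly punctured by ghost arcs/bridge arcs becoming cone points) or a component of $\boundary_- C$ together with $\boundary_- C$ itself. The key bookkeeping identity is that $\X(\boundary_+ C)$ equals $\X(\boundary_- C)$ plus (twice the genus-type contributions from the 1-handles/core loops) minus (the total $2/\weight$ drop from the sc-discs), so $N(C,T_C) \geq 0$ will essentially force every inequality to be sharp. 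More precisely I would argue: a single 0-handle contributes a 2-sphere to $\boundary_+$ with $\X = -2 + (\text{puncture terms})$, each 1-handle raises $\X(\boundary_+)$ by $2$ (connecting a handle), each sc-disc used lowers it by $\le 1$, and each 1-handle forces us to use at least one sc-disc to undo it. This gives $N(C,T_C) \geq -2 \cdot (\text{number of 0-handles}) + (\text{nonnegative terms})$, with the $-2$ only surviving in the trivial-ball case.

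\textbf{Step 2: the case $N(C,T_C) < 0$.} Here I would show $(C,T_C)$ must be a single trivial ball compressionbody. If $\boundary_- C \neq \nil$, then since no component of $\boundary_- C$ is a once-punctured sphere and $(C,T_C)$ is an orbifold, every component $F \cpt \boundary_- C$ has $\X(F) \geq 0$ (spheres have $\geq 3$ punctures or are unpunctured; higher-genus surfaces have $\chi \le 0$; etc.), and similarly $\X(\boundary_+ C) \geq \X(\boundary_- C)$ because $\boundary_+ C$ is obtained from $\boundary_- C$ by adding handles and then we only added punctures — a compressionbody retracts so $\boundary_+$ has no smaller complexity. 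So $\boundary_- C = \nil$, meaning $(C,T_C)$ is built from 0-handles and 1-handles only. If there are $\ge 2$ $0$-handles, connectivity forces $\ge 1$ connecting 1-handle, and one checks the handle contributions push $N \geq 0$. Likewise if there is any 1-handle at all (a core loop or bridge/ghost arc handle), the genus or puncture increase offsets the $\leq 1$ from its dual sc-disc. So $(C, T_C)$ is a single 0-handle, i.e. a trivial ball compressionbody, and then $N = \X(\boundary_+ C) < 0$ is possible (e.g. $\boundary_+ C$ a sphere with $\le 2$ punctures, or hyperbolic small-puncture spheres).

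\textbf{Step 3: the case $N(C,T_C) = 0$ with $\boundary_- C = \nil$.} Now every inequality in Step 1 is an equality. $\boundary_- C = \nil$ so again only 0- and 1-handles appear. Either: (a) one 0-handle and no 1-handle — then $(C,T_C)$ is a trivial ball compressionbody with $\X(\boundary_+ C) = 0$, i.e. a Euclidean trivial ball compressionbody (case 1); or (b) exactly one 1-handle must be ``paid for'' by exactly one sc-disc of weight giving drop exactly $2$, which forces $\weight(D) = 1$, i.e. the handle is unweighted, but to have the puncture/genus accounting balance one checks: two 0-handles joined by one unweighted 1-handle with the right punctures is a Euclidean pillow (case 2); one 0-handle with a single unweighted self-1-handle (a core loop handle) with no punctures gives a solid torus with $T_C = \nil$ (case 3); and one 0-handle with a weighted self-1-handle contributing a core loop that is part of $T_C$ gives (solid torus, core loop) (case 4) — here the weight raises $\X(\boundary_+)$ appropriately while the dual cut disc drops it, balancing to $0$. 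In each subcase I would verify $\boundary_+ C$ is a torus or a $4$-punctured sphere / appropriately punctured sphere with $\X = 0$, matching the listed Euclidean boundary condition. The enumeration is finite because $\boundary_+ C$ closed with $\X(\boundary_+ C) - (\text{drops}) = 0$ and all drops are in $(0,1]$ severely constrains genus and puncture count.

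\textbf{Main obstacle.} The hard part is the precise combinatorial bookkeeping in Step 1 — setting up the inequality relating $N(C,T_C)$ to the number of 0-handles, 1-handles, ghost arcs, core loops, bridge arcs, and the weights on the dual sc-discs, so that the $N \le 0$ hypothesis pins down exactly the four listed families and no spurious cases (such as solid tori with bridge arcs, or pillows with mismatched weights) slip through. I expect to handle this by fixing a complete collection $\Delta$ of sc-discs of minimal total weight and doing the count one handle at a time, using that each handle's dual sc-disc has $\weight(D) = 1$ unless $D$ is a cut disc meeting an edge of $T_C$, in which case that edge contributes a compensating puncture term to $\boundary_+ C$. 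The once-punctured-sphere exclusion on $\boundary_- C$ is exactly what rules out the degenerate bad-orbifold boundary that would otherwise break the $\X \geq 0$ estimates.
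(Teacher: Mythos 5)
Your overall strategy --- cut along a complete collection of sc-discs and do $\X$-bookkeeping on the resulting trivial pieces --- is the same as the paper's, but two of your steps fail as written. In Step 2 you deduce $\boundary_- C = \nil$ from the claim that every component $F \cpt \boundary_- C$ has $\X(F) \geq 0$; this is false under the lemma's hypotheses, which exclude only once-punctured spheres: $\boundary_- C$ may contain unpunctured spheres ($\X = -2$), twice-punctured spheres ($\X = -2/a$), or spherical turnovers, all with $\X < 0$. The companion assertion that $\X(\boundary_+ C) \geq \X(\boundary_- C)$ ``because a compressionbody retracts'' is precisely the inequality $N(C,T_C) \geq 0$ you are trying to prove, and it ignores the ball $0$-handles, whose boundaries contribute as little as $-2$. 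The correct statement requires pairing each ball $0$-handle with an incident $1$-handle (or, as the paper does, introducing the corrected per-piece quantity $N'$ obtained by adding $1/\weight(E)$ for each scar $E$, which satisfies $N(C,T_C) = \sum N'$). Relatedly, your Step 2 argues from an arbitrary handle structure (``if there is any $1$-handle at all\dots''), but two unpunctured $0$-handles joined by an unweighted $1$-handle is a trivial ball compressionbody with $N = -2$; ruling out such spurious structures needs the fact that $\Delta$ consists of genuine, pairwise nonparallel sc-discs (no disc boundary-parallel, no two discs parallel), which is exactly what the paper invokes at these points and which your sketch only gestures at.

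Second, the arithmetic underlying Steps 1 and 3 is off: compressing along an sc-disc drops $\X$ by $2/\weight(D)$, which equals $2$ (not $\leq 1$) when $D$ is unpunctured, while attaching a weighted $1$-handle raises $\X(\boundary_+ C)$ by only $2/\weight$, not $2$. Consequently your deduction that in the $N = 0$ case the unique $1$-handle ``must be unweighted'' is false: it would exclude the Euclidean pillow obtained by joining two (3-ball, trivalent vertex) pieces with weights $(2,2,k)$ along a weight-$k$ $1$-handle, which balances as $-1/k - 1/k + 2/k = 0$ and is a genuine instance of conclusion (2) of the lemma (it also contradicts your own case (4), where you allow a weighted self-handle). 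So your enumeration misses one of the two pillow types, and the balancing inequality it rests on is not correctly set up. The paper's bookkeeping --- each scar contributes $+1/\weight(E)$ to $N'$ of its piece, each trivial ball piece contributes the $\X$ of its boundary, and equality cases are read off per piece before reattaching the handles --- is the precise version of the count you yourself flag as the main obstacle; with it, the case analysis closes in the way you anticipate.
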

\begin{proof}
Let $\Delta$ be a complete collection of sc-discs for $(C, T_C)$ such that $\boundary$-reducing $(C, T_C)$ along $\Delta$ results in trivial vp-compressionbodies $(C', T'_C)$.  Each disc of $\Delta$, leaves 2 ``scars'' on $\boundary_+ C'$. If $E$ is a scar, let $\weight(E) = 1$ if it is unpunctured and otherwise let $\weight(E)$ be the weight of the puncture. Let $(C_0, T_0) \cpt (C', T'_C)$. Let $N'(C_0, T_0)$ be equal to the sum of $N(C_0, T_0)$ with $\frac{1}{\weight(E)}$ for all scars $E$ on $\boundary_+ C_0$. Observe that
\[
N(C, T_C) = \sum\limits_{(C_0, T_0)} N'(C_0, T'_0)
\]
where the sum is taken over all $(C_0, T_0) \cpt (C', T')$. 

If $(C_0, T_0)$ is a product compressionbody then $N'(C_0, T_0) \geq N(C_0, T_0) = 0$ with equality if and only if every scar on $\boundary_+ C_0$ has weight $\infty$. Suppose that $(C_0, T_0)$ is a trivial ball compressionbody. 

\textbf{Case 1:} $T_0 = \nil$. 

If $\Delta = \nil$, then $(C, T_C) = (C_0, T_0)$ and $N(C, T_C) = -2$. Otherwise, by the choice of $\Delta$, $\boundary_+ C_0$ contains at least 2 scars, each of weight 1.  If it contains exactly 2, then $(C, T_C)$ is (solid torus, $\nil$). If it has at least 3 scars, then $N'(C_0, T_0) \geq 1$. 

\textbf{Case 2:} $T_0$ is an arc of weight $k$.

If $\Delta = \nil$, then $(C, T_C) = (C_0, T_0)$ and $N(C, T_C) = -\frac{2}{k} \geq -1$. If $N(C, T_C) = 0$, then $k = \infty$. If $\Delta \neq \nil$, then $\boundary_+ C_0$ contains at least one scar. By our choice of $\Delta$, either $(C, T_C)$ is (solid torus, core loop) or $\boundary_+ C_0$ contains at least 1 scar of weight 1. In which case, $N'(C_0, T_0) \geq 0$. Equality holds if and only if $k = 2$, there is exactly one scar and it has weight 1. 

\textbf{Case 3:} $T_0$ contains an interior vertex.

Note that $N(C_0, T_0) = \X(\boundary_+ C) = \X(v) < 0$ where $v$ is the internal vertex of $T_0$. If $\Delta = \nil$, then we have our result. If $\Delta \neq \nil$, then $\boundary_+ C_0$ contains at least one scar and it either has weight 1 or has weight equal to the weight of one of the punctures on $\boundary_+ C$. In which case, $N'(C_0, T_0) \geq 0$.  Equality holds only when there is exactly one scar, it contains a puncture, and the two punctures not contained in the scar both have weight 2.

This concludes our analysis of the individual cases and, in particular, we may assume that $\Delta \neq \nil$ and that $(C, T_C)$ is neither (solid torus, $\nil$) or (solid torus, core loop).  By our analysis, each component $(C_0, T_0) \cpt (C', T'_C)$ has $N'(C_0, T_0) \geq 0$. Thus, $N(C, T_C) \geq 0$. Suppose that $N(C, T_C) = 0$. Then $N'(C_0, T_0) = 0$ for each component of $(C', T'_C)$. Consequently, each component is one of:
\begin{itemize}
\item A product compressionbody such that every scar has infinite weight,
\item A trivial ball compressionbody containing an arc and with a single scar of weight 1,
\item A trivial ball compressionbody containing a trivalent vertex and with edges of weight $(2,2,k)$ with $k \geq 2$. It has a single scar of weight $k$.
\end{itemize}
The compressionbody $(C, T_C)$ can be reconstructed by attaching possibly weighted 1-handles to the scars on $(C', T')$. Thus our result holds if $N(C, T_C) \leq 0$. 
\end{proof}

We next have two propositions whose proofs are closely related. 
\begin{proposition}\label{basic fact}
Suppose that $(M,T)$ is a nice orbifold and that $\mc{H} \in \H(M,T)$. Then either $\netX(\mc{H}) \geq \frac{1}{2}\X(\boundary M)$ or $(M,T)$ is one of $\mathbb{S}(0)$, $\mathbb{S}(2)$, or $\mathbb{S}(3)$. Furthermore, there exists a locally thin $\mc{H} \in \H(M,T)$ with $\netX(\mc{H}) = \netX(M,T)$. If $(M,T)$ is an $\mathbb{S}(i)$, for $i \in \{0,2,3\}$, then any locally thin $\mc{H}$ is an $i$-punctured sphere.
\end{proposition}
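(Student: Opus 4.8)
The plan is to derive the inequality from the fundamental identity~(\ref{Fundamental}) together with Lemma~\ref{low N}, and to obtain the minimizer from a discreteness argument combined with Theorem~\ref{locally thin} and Lemma~\ref{no incr}. For the inequality, fix $\mc{H}\in\H(M,T)$. Since $(M,T)$ is nice it contains no once-punctured sphere, so no component of $\boundary_- C$ is a once-punctured sphere for any $(C,T_C)\cpt(M,T)\setminus\mc{H}$, and Lemma~\ref{low N} applies to every complementary vp-compressionbody. If $N(C,T_C)\geq 0$ for all of them, then~(\ref{Fundamental}) immediately gives $2\netX(\mc{H})\geq\X(\boundary M)$, the first alternative. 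Otherwise some $N(C_0,T_{C_0})<0$; by Lemma~\ref{low N} the pair $(C_0,T_{C_0})$ is a trivial ball compressionbody, so $H_0:=\boundary_+ C_0$ is a sphere, and since $N(C_0,T_{C_0})<0$ rules out four or more punctures while niceness rules out exactly one, $H_0$ has $0$, $2$, or $3$ punctures.

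Next I would trace the orbifold through $H_0$. The vp-compressionbody $(C_0',T_{C_0'})$ on the far side has connected positive boundary containing $H_0$, hence $\boundary_+ C_0'=H_0$; a sphere with at most three punctures carries no c-disc, so $(C_0',T_{C_0'})$ is a trivial ball or trivial product compressionbody, and in the product case its negative boundary is a parallel copy of $H_0$. Because niceness forbids a sphere component of $\boundary M$ with two or fewer punctures, that copy is then a thin surface and one continues across it; compactness of $(M,T)$ keeps this from continuing forever, and connectedness of $(M,T)$ exhibits it as a chain of a trivial ball compressionbody, finitely many trivial products, and another trivial ball compressionbody. Counting punctures identifies this chain as $(S^3,\nil)=\mathbb{S}(0)$, $(S^3,\text{unknot})=\mathbb{S}(2)$, or $(S^3,\text{trivial }\theta)=\mathbb{S}(3)$, with $\mc{H}$ a sphere after consolidating the product pieces; this is the second alternative, and since the $\mathbb{S}(i)$ are closed it only arises when $\boundary M=\nil$. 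I expect this case analysis --- tracking punctures and weights and ruling out every gluing but the three above, invoking niceness at each step --- to be the main obstacle, although it is elementary.

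Finally, for the ``furthermore'': every puncture weight occurring in a surface of any $\mc{H}\in\H(M,T)$ is an edge weight of $T$ or $\infty$, so the values $\X(\mc{H}^{\pm})$, and hence all values $\netX(\mc{H})$, lie in $\tfrac1N\Z$ for a fixed $N=N(M,T)$. If $(M,T)$ is not one of $\mathbb{S}(0),\mathbb{S}(2),\mathbb{S}(3)$, the dichotomy gives $\netX(\mc{H})\geq\tfrac12\X(\boundary M)$ for every $\mc{H}$, so $\{\netX(\mc{H})\}$ is a subset of $\tfrac1N\Z$ bounded below and $\netX(M,T)$ is attained, say by $\mc{H}_0$; since $(M,T)$ is nice, Theorem~\ref{locally thin} supplies a locally thin $\mc{H}$ with $\mc{H}_0\to\mc{H}$, and Lemma~\ref{no incr} gives $\netX(\mc{H})\leq\netX(\mc{H}_0)=\netX(M,T)$, so equality holds. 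If $(M,T)$ is one of the three exceptional orbifolds it has no c-essential surface, so Theorem~\ref{locally thin}(2) forces any locally thin $\mc{H}$ to have $\mc{H}^-=\nil$, i.e.\ to be an orbifold Heegaard surface; a direct computation of $\netX$ over bridge surfaces pins down the minimum (realized by the evident bridge sphere, which admits no thinning move), and thinning an arbitrary $\mc{H}_0$ again produces a locally thin minimizer.
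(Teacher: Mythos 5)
Your treatment of the case where every complementary piece has $N(C,T_C)\geq 0$, and your discreteness argument for the ``furthermore'' clause, are essentially the paper's. The gap is in the other case. You try to prove, for an \emph{arbitrary} $\mc{H}\in\H(M,T)$, that the existence of a piece with $N(C_0,T_{C_0})<0$ forces $(M,T)$ to be exceptional, by tracing across $H_0=\boundary_+C_0$. Two steps fail. First, ``a sphere with at most three punctures carries no c-disc, so $(C_0',T_{C_0'})$ is a trivial ball or trivial product compressionbody'' is not valid: the defining reducing discs of a vp-compressionbody are sc-discs, and semi-compressing/semi-cut discs can be present when no c-disc is. For example, join two trivial product compressionbodies $(S^2\times I,\{\text{two vertical arcs}\})$ by a weighted 1-handle attached at an endpoint of a vertical arc in each: the positive boundary is a twice-punctured sphere admitting no c-disc, yet the compressionbody is nontrivial (it has a ghost arc and two spherical components of negative boundary). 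Second, even when the far side is a trivial product, ``continuing across'' its negative boundary is unjustified: that thin surface is a \emph{negative} boundary component of the next compressionbody, whose positive boundary is a different thick surface about which you know nothing, so the chain need not close up into $\mathbb{S}(0)$, $\mathbb{S}(2)$, or $\mathbb{S}(3)$. Indeed your intermediate claim is false for general $\mc{H}$: for any knot $K\subset S^3$ of weight $k$ in bridge position, inserting a parallel thin/thick pair of twice-punctured spheres around a boundary-parallel subarc of a bridge arc (the result of untelescoping a perturbed bridge sphere) gives $\mc{H}\in\H(S^3,K)$ with a complementary trivial ball compressionbody of $N=-2/k<0$, although $(S^3,K)$ is not exceptional. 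The proposition is not contradicted, since that $\mc{H}$ still satisfies $\netX(\mc{H})\geq \tfrac12\X(\boundary M)$, but your argument does not establish it.

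The paper avoids this by proving the dichotomy only for \emph{locally thin} $\mc{H}$, where the hypotheses you are missing become available: thin surfaces are c-essential (so no twice-punctured inessential spheres as above), trivial product pieces adjacent to thin surfaces are excluded, and the ghost arc graph of the compressionbody opposite $H_0$ can be analyzed directly; when $(M,T)$ is orbifold-reducible one also needs the separate computation over the $\mathbb{S}(2)$ and $\mathbb{S}(3)$ pieces cut off by the summing spheres that Corollary \ref{thin sum} places in $\mc{H}^-$. The statement for arbitrary $\mc{H}$ then follows by thinning and Lemma \ref{no incr} --- exactly the mechanism you already invoke in your final paragraph. So the repair is structural rather than cosmetic: establish the negative-$N$ analysis under local thinness, then transfer; as written, both the dichotomy and (since it quotes the dichotomy) your ``furthermore'' argument rest on an unsound step.
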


\begin{proposition}\label{1/6 thm}
Suppose that $(M,T)$ is a nice, closed orbifold that is orbifold-irreducible and that $\mc{H} \in \H(M,T)$ is locally thin. Then either $\netX(\mc{H}) \geq 1/6$ or one of the following exceptional cases holds:
\begin{enumerate}
\item $\mathbb{S}(0)$, $\mathbb{S}(2)$, or $\mathbb{S}(3)$;
\item $M = S^3$ or a lens space and $T$ is  a core loop, Hopf link and $\mc{H}$ is an unpunctured torus; or
\item $(M,T)$ is a Euclidean double pillow and $\mc{H}$ is a four-punctured sphere.
\end{enumerate}
\end{proposition}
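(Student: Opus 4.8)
The plan is to evaluate the fundamental identity \eqref{Fundamental}, which for closed $M$ reads $2\netX(\mc H)=\sum_{(C,T_C)}N(C,T_C)$ (sum over the complementary orbifold compressionbodies), and to analyze the right side term by term using Lemma \ref{low N} and the structural conclusions of Theorem \ref{locally thin}; the argument runs parallel to, but is more delicate than, that of Proposition \ref{basic fact}. First I dispose of negative terms: if some $N(C,T_C)<0$, then Lemma \ref{low N} (its once-punctured-sphere hypothesis holding since $(M,T)$ is nice) forces $(C,T_C)$ to be a trivial ball compressionbody, so $\boundary_+C\cpt\mc H^+$ is a sphere with at most three punctures; by Theorem \ref{locally thin}(4) then $\mc H=\mc H^+$ is that single sphere, and the compressionbody on its other side has $N=\X(\mc H)<0$ as well, hence is also a trivial ball compressionbody. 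Gluing the two along a $0$-, $2$-, or $3$-punctured sphere (the once-punctured case being excluded by niceness) yields $\mathbb S(0)$, $\mathbb S(2)$, or $\mathbb S(3)$ — exceptional case (1).

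Henceforth every $N(C,T_C)\ge 0$, so $\netX(\mc H)\ge 0$; suppose for contradiction $\netX(\mc H)<1/6$, i.e. $\sum_C N(C,T_C)<1/3$. I would record three inputs. $(a)$ By Lemma \ref{low N}, a compressionbody with $\boundary_-C=\nil$ and $N(C,T_C)=0$ is a Euclidean pillow, a solid torus, a solid torus with a core loop, or a $(\text{3-ball},\,\text{weight-}\infty\text{ arc})$. $(b)$ A weight book-keeping — compressing along an sc-disc of weight $w$ changes $\X$ by $-2/w$, and $\X(v)\in\{-1/k:k\ge2\}\cup\{-1/6,-1/12,-1/30\}$ at each trivalent vertex — shows that the least positive value of $N(C,T_C)$ over compressionbodies with $\boundary_-C=\nil$ is exactly $1/6$. $(c)$ If some component of $\mc H^+$ is a sphere with at most three punctures, then $\mc H=\mc H^+$ (Theorem \ref{locally thin}(4)).

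Now the connected case $\mc H=\mc H^+$: here $(M,T)=C_1\cup_{\mc H}C_2$ with $\boundary_-C_i=\nil$ and $N(C_i)=\X(\mc H)$, so by $(b)$ we must have $\X(\mc H)=0$ and, by $(a)$, each $C_i$ is a Euclidean pillow, a solid torus, a solid torus with core loop, or a $(\text{3-ball},\,\text{weight-}\infty\text{ arc})$, with $\boundary_+C_1=\boundary_+C_2=\mc H$. Matching these up: a weight-$\infty$ twice-punctured sphere gives $(S^3,\text{unknot})=\mathbb S(2)$; a torus means two solid tori (each possibly carrying a core loop) are glued along $\mc H$, so $M$ is $S^3$ or a lens space — $S^1\times S^2$ being excluded, as neither $(S^1\times S^2,\nil)$ nor $(S^1\times S^2,\text{core loop})$ is nice — and $T$ is empty, a core loop, or a Hopf link (case (1) or (2)); and a Euclidean four-punctured sphere forces both $C_i$ to be Euclidean pillows, so $(M,T)$ is a Euclidean double pillow (case (3)). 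The remaining possibilities for $\mc H$ do not occur: a $0$- or finite-weight $2$-punctured sphere, or a spherical turnover, lands in the $N<0$ situation already treated; a Euclidean turnover as thick surface would require a cone vertex with $\X(v)=0$, violating the definition of orbifold; and a hyperbolic turnover $\mc H$ with $\X(\mc H)<1/6$ cannot bound two $\boundary_-=\nil$ compressionbodies, by $(b)$.

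The remaining case, $\mc H^-\neq\nil$, is the heart of the proof; here the claim is $\netX(\mc H)\ge 1/6$. The strategy is to combine the nonnegativity of all $N(C,T_C)$, the ``$1/6$ gap'' of $(b)$ for compressionbodies with $\boundary_-=\nil$, and an extension of Lemma \ref{low N} classifying the orbifold compressionbodies with $\boundary_-C\neq\nil$ and $N(C,T_C)=0$, and then to show that any thin decomposition of $(M,T)$ assembled — away from at least two ``Euclidean'' end pieces — entirely from such zero-$N$ compressionbodies would force a consolidation move (violating local thinness), a pair of disjoint sc-discs on a thick surface (violating sc-strong irreducibility of $\mc H^+$, Theorem \ref{locally thin}(1)), or a $c$-essential sphere inside $\mc H^-$ (violating orbifold-irreducibility, via Theorem \ref{locally thin}(2)--(3)); capping this off, a compressionbody with $\boundary_-=\nil$ whose thick boundary has at most three punctures forces $\mc H=\mc H^+$ by $(c)$, and Theorem \ref{locally thin}(5) rules out trivial product compressionbodies since $\boundary M=\nil$. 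I expect the delicate book-keeping here — tracking punctures of finite and infinite weight through the compressionbodies and through their gluings along $c$-essential Euclidean turnovers and tori — to be exactly the obstacle. It is the orbifold analogue of the analysis of ``net extent'' of irreducible $3$-manifolds in \cite{TT2}, and I would carry that argument over with the weight computations of Lemma \ref{low N} substituted for the extent computations there.
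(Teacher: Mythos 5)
Your frame (the identity \eqref{Fundamental}, Lemma~\ref{low N}, Theorem~\ref{locally thin}) is the paper's, and your treatment of the negative-$N$ case and the matching-up in the connected case $\mc{H}=\mc{H}^+$ tracks the actual proof. But the case you yourself call the heart of the matter, $\mc{H}^-\neq\nil$, is not proved: you describe an intention to classify the zero-$N$ compressionbodies with $\boundary_-C\neq\nil$ and to derive a contradiction from how such pieces could be assembled, deferring the bookkeeping to an analogy with the net-extent analysis of \cite{TT2}. That is a genuine gap, and the proposed route is also heavier than what is needed. Since the dual digraph of $\mc{H}$ is acyclic it has a source and a sink, and these are precisely the complementary pieces with $\boundary_-C=\nil$, where $N(C,T_C)=\X(\boundary_+C)$; because every other summand of \eqref{Fundamental} is nonnegative, it suffices to show that each of these two extreme terms is at least $1/6$ outside the exceptional cases. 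The only new configuration to rule out is $\boundary_+C$ a four-punctured sphere with all punctures of weight $2$ (so $N=0$): if the compressionbody $(D,T_D)$ on the other side has $\boundary_-D=\nil$ you get the Euclidean double pillow, and if $\boundary_-D\neq\nil$ one examines the ghost arc graph of $(D,T_D)$ --- it is acyclic; an isolated vertex would make $(D,T_D)$ a product, contradicting local thinness since $M$ is closed; so it has two leaves, each incident to two vertical arcs, and at least one leaf is a component $F\cpt\mc{H}^-$, which by orbifold-irreducibility and c-essentialness is a sphere with at least three punctures and $\X(F)\geq 0$, forcing a vertical arc of weight at least $3$ and contradicting the all-weight-$2$ punctures of $\boundary_+D$. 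Nothing in your sketch (consolidations, pairs of disjoint sc-discs, essential spheres in $\mc{H}^-$) produces this argument, and no classification of interior zero-$N$ pieces is required.

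Second, your input $(b)$ --- that the least positive value of $N(C,T_C)$ over compressionbodies with $\boundary_-C=\nil$ is exactly $1/6$ --- carries real weight in your connected case (it is what pins $\X(\mc{H})=0$ and excludes a hyperbolic turnover thick surface), but it does not follow from the two observations you cite. A hyperbolic turnover such as the $(2,3,7)$-turnover has $\X=1/42<1/6$, so you must actually prove that no 2-orbifold with $0<\X<1/6$ bounds an orbifold compressionbody with empty negative boundary; for finite weights this amounts to Zimmermann's $12(g-1)$ bound for finite group actions on handlebodies, or to a scar-by-scar computation in the style of the proof of Lemma~\ref{low N}, neither of which you supply. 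The paper never needs such a gap statement: if a thick surface is a sphere with at most three punctures, Theorem~\ref{locally thin}(4) forces $\mc{H}=\mc{H}^+$, the two complementary pieces are then trivial ball compressionbodies, and the vertex condition $\X(v)<0$ both rules out hyperbolic turnovers and lands you in case (1). So either prove the $1/6$ gap honestly or reroute through Theorem~\ref{locally thin}(4) as the paper does; as written, both this claim and the entire $\mc{H}^-\neq\nil$ case are unproven.
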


The remainder of the section is devoted to the proofs of these propositions.  A key bookkeeping device for a vp-compressionbody $(D, T_D)$ is its \defn{ghost arc graph}. This is the graph $\Gamma$ whose vertices are the components of $\boundary_- D$ and the vertices of $T_D$. The ghost arcs of $T_D$ are the edges. For example, if $(D, T_D)$ has a single ghost arc and it joins distinct components of $\boundary_- D$, then $\Gamma$ is a single edge. The key observation is that if $\boundary_+ D$ is a sphere, then $\Gamma$ is acyclic and if $\boundary_+ D$ is a torus, then $\Gamma$ contains at most one cycle. If it contains a cycle, then $\boundary_- D$ is the union of spheres (that is, it does not contain a torus).

Begin by assuming only that $(M,T)$ is a nice orbifold. Let $\mc{H} \in \H(M,T)$ be locally thin. Recall from Corollary \ref{thin sum} that $\mc{H}^-$ contains an efficient system of summing spheres $S$. Assume, for the time being, that $S = \nil$; equivalently, that $(M,T)$ is orbifold-irreducible. Since each component of $\mc{H}^-$ is c-essential in $(M,T)$, this also implies that no $S_0 \cpt \mc{H}^-$ is a sphere with $|S_0 \cap T| \leq 3$ and $\X(S_0) < 0$.

\textbf{Case 1: } Some $(C, T_C) \cpt (M,T) \setminus \mc{H}$ has $N(C, T_C) < 0$.  

By Lemma \ref{low N}, $(C, T_C)$ is a trivial ball compressionbody. Observe that $\boundary_+ C$ is a sphere with 0, 2, or 3 punctures. Let $(D, T_D) \cpt (M,T)\setminus \mc{H}$ be the other vp-compresionbody having $\boundary_+ D = \boundary_+ C$. If $\boundary_- D = \nil$, then $M = C \cup D$ and $(D, T_D)$ is also a trivial ball compressionbody. In this case, $(M,T)$ is either $\mathbb{S}(0)$, $\mathbb{S}(2)$, or $\mathbb{S}(3)$. Assume there exists $F \cpt \boundary_- D$.

Let $\Gamma$ be the ghost arc graph for $(D, T_D)$. As $\boundary_+ D$ is a sphere, $\Gamma$ is acyclic and the components of $\boundary_- D$ are all spheres. Since $(M,T)$ is nice, none of them are once-punctured. Since $S = \nil$, $F$ is at least thrice-punctured and has $\X(F) \geq 0$.  If $\Gamma$ contains an isolated vertex,  $(D, T_D)$ is a product.  Since $\mc{H}$ is locally thin, $F = \boundary_- D \subset \boundary M$.  If $T_C$ contains an interior vertex $v$, we must have $0 > \X(v) = \X(F) \geq 0$, a contradiction. If $T_C$ does not contain an interior vertex, then $F$ is twice-punctured, contradicting our definition of nice 3-orbifold. Thus, we may assume that $\Gamma$ does not have an isolated vertex. Since no component of $\boundary_- D$ is a twice-punctured sphere,  each leaf of $\Gamma$ is incident to at least two vertical arcs, so there is at most one leaf. Since $\Gamma$ is acyclic, this is a contradiction.  Consequently, $(M,T)$ is one of the exceptional cases in the statement of Proposition \ref{basic fact}. 

\textbf{Case 2:} Every $(C, T_C) \cpt (M,T) \setminus \mc{H}$ has $N(C, T_C) \geq 0$.  

By \eqref{Fundamental}, we see that $\netX(\mc{H}) \geq \X(\boundary M)/2$. Let $L$ be the product of all the finite weights on $T$. Note that for any $\mc{J} \in \H(M,T)$, the quantity $2L\netX(\mc{J})$ is an integer, as is $2L\X(\boundary M)/2$.  By Theorem \ref{locally thin}, for any $\mc{J} \in \H(M,T)$, there exists a locally thin $\mc{H} \in \H(M,T)$ with $\mc{J} \to \mc{H}$. By Lemma \ref{no incr}, $\netX(\mc{J}) \geq \netX(\mc{H})$. If $(M,T)$ is  one of the exceptional cases from Proposition \ref{basic fact}, then by the analysis in Case 1, $\mc{H}$ is connected and so $L\netX(\mc{J})$ is bounded below by a constant depending only on $(M,T)$. If $(M,T)$ is not one of the exceptional cases from Proposition \ref{basic fact}, then we see that $2L\netX(\mc{J}) \geq 2L\X(\boundary M)/2 \geq 0$. Thus, in either case, since the invariant $2L\netX$ defined on $\H(M,T)$ is integer-valued and bounded below by a number depending only on  it achieves its minimum on a locally thin element of $\H(M,T)$. That element also minimizes $\netX$. This concludes the analysis when $S = \nil$ for the proof of Proposition \ref{basic fact}.

Now suppose that $S \neq \nil$.  Expand $S$ to include all summing spheres in $\mc{H}^-$; continue to call it $S$. As we have observed previously,
\[
\netX(\mc{H}) = \netX(\mc{H} \setminus S) - \X(S).
\]
Let $(M_0, T_0) \cpt (M,T)|_S$ and let $\mc{H}_0 = (\mc{H}\setminus S) \cap M_0$. If $\netX(\mc{H}_0) < 0$, then $(M_0, T_0)$ is one of the exceptional cases from Proposition \ref{basic fact}. Since $\mc{H}$ is locally thin, each component of $S$ is essential, so $(M_0, T_0) \neq \mathbb{S}(0)$. If $(M_0, T_0) = \mathbb{S}(2)$, then at least one of the components $S'$ of $S$ used to sum with $(M_0, T_0)$ must be unpunctured. Thus, if $T_0$ has weight $k$, we have:
\[
\netX(\mc{H}_0) - \frac{1}{2} \X(S') = -\frac{2}{k} + 1 \geq 0.
\]
Similarly, if $(M_0, T_0)$ is an $\mathbb{S}(3)$, then at least one of the components $S' \cpt S$ used to sum with $(M_0, T_0)$ must be either unpunctured or twice punctured and with the weight of the punctures equal to the weight $c$ of one of the edges of $T_0$. In that case, letting $a,b$ be the weights of the other punctures,
\[
\netX(\mc{H}_0) - \frac{1}{2}\X(S') \geq 1 - (\frac{1}{a} + \frac{1}{b} + \frac{1}{c}) + \frac{1}{2}\cdot \frac{2}{c} \geq 0.
\]
Consequently, $\netX(\mc{H}) \geq \X(\boundary M)/2$, even in this situation. As before, the quantity $L\netX$ is an integer-valued invariant on $\H(M,T)$ bounded below by a constant depending only on $(M,T)$ and so, as before, there is a locally thin $\mc{H} \in \H(M,T)$ with $\netX(\mc{H}) = \netX(M,T)$. This concludes the proof of Proposition \ref{basic fact}. 

Henceforth, suppose that $(M,T)$ is closed and orbifold-irreducible and not one of the exceptional cases from Proposition \ref{basic fact}. By our previous remarks, this implies that $N(C, T_C) \geq 0$ for every $(C, T_C) \cpt (M,T)\setminus \mc{H}$. The dual digraph to $\mc{H}$ is acyclic, so it has at least one source and one sink. The sources and sinks are exactly those $(C, T_C) \cpt (M,T)\setminus \mc{H}$ with $\boundary_- C  = \nil$. Suppose that $(C, T_C)$ is one such.  Note that $N(C, T_C) = \X(\boundary_+ C)$. Let $(D, T_D) \cpt (M,T)\setminus \mc{H}$ be the other orbifold compressionbody with $\boundary_+ D = \boundary_+ C = H$.

Observe that $\X(H) \geq -\chi(H) + |H \cap T|/2$. Equality holds only if every puncture on $H$ has weight 2. Consequently, if $1/6 > \X(H)$, then $H$ is a sphere with $|H \cap T| \leq 4$. If $|H \cap T| \leq 3$, then by our analysis above $(M,T)$ is one of the exceptional cases from Proposition \ref{basic fact}. Consider, therefore, the case that $|H \cap T| = 4$. If at least one puncture does not have weight $2$, then $\X(H) \geq 1/6$, so assume that each puncture has weight 2. If $\boundary_- D = \nil$, then $\mc{H}$ divides  $(M,T)$ into two Euclidean pillows. Suppose $\boundary_- D \neq \nil$ and let $\Gamma$ be the ghost arc graph for $(D, T_D)$ as above. It is acyclic. Each component of $\boundary_- D$ is a sphere with at least three punctures, since $(M,T)$ is orbifold-irreducible and nice. Also $\boundary_- D \subset \boundary M$ since $M$ is closed. An isolated vertex of $\Gamma$ is a sphere incident to at least three vertical arcs and a leaf is a sphere incident to at least two vertical arcs. Since $H$ has four punctures, if $\Gamma$ has an isolated vertex, that vertex is the entirety of $\Gamma$ and it is incident to four vertical arcs. This implies $(D, T_D)$ is a product and contradicts local thinness of $\mc{H}$. Thus, $\Gamma$ has two leaves, each incident to two vertical arcs. At least one of those leaves $F$ is a component of $\boundary_- D$ (the other may be a vertex of $T$). Since $(M,T)$ is orbifold-irreducible, $\X(F) \geq 0$. Consequently, at least two of the arcs incident to $F$ have weight at least 3. At least one of those is a vertical arc, contradicting the fact that each puncture of $H$ has weight 2. Consequently, $N(C, T_C) = \X(H) \geq 1/6$. 

Since the dual digraph to $\mc{H}$ has at least one source and one sink either $(M,T)$ is one of the exceptional cases of Proposition \ref{basic fact}, or $\mc{H}$ is a four punctured sphere dividing $(M,T)$ into two Euclidean pillows, or $2\netX(\mc{H}) \geq 2\cdot (1/6)$. This concludes the proof of Proposition \ref{1/6 thm}.

Figure \ref{1/6 sharp} shows that our bound of 1/6 is asymptotically sharp.

\begin{figure}[ht!]
\labellist
\small\hair 2pt
\pinlabel $4$ [t] at 122 111
\pinlabel $2$ [bl] at 33 108
\pinlabel $3$ [r] at  217 110
\pinlabel $a$ [l] at 124 178
\pinlabel $2$ [bl] at 93 236
\pinlabel $2$ [br] at 149 236
\pinlabel $4$ [t] at 121 266
\pinlabel $2$ [r] at 91 142
\pinlabel $2$ [l] at 155 142
\endlabellist
\centering
\includegraphics[scale=0.8]{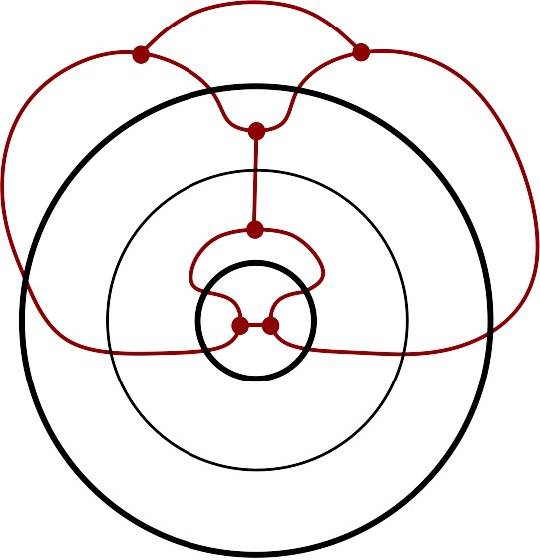}
\caption{An example of an orbifold with underlying 3-manifold $S^3$. The thick circles represent thick spheres and the thin circle is a thin sphere of a multiple vp-bridge surface $\mc{H}$. Arbitrary gluing maps preserving the punctures pointwise can be used along the thick spheres.  For $a \in \WeightSet$ we have $\netX(\mc{H}) = \frac{1}{6} + \frac{1}{a}$ and the orbifold characteristic of the thin sphere is $\frac{1}{6} - \frac{1}{a}$. Thus, for $a \geq 6$, $\mc{H}^-$ does not contain a spherical orbifold. As $a \to \infty$, we approach 1/6.}
\label{1/6 sharp}
\end{figure}

\subsection{Equivariant Heegaard genus and the order of the group}

\begin{theorem}\label{counting lower bd}
Suppose that $W$ is closed and connected and that $G$ does not act freely. If $S \subset W$ is an invariant system of summing spheres, then 
\[
\g(W;G) \geq |G|(\nu/12 - \mu/2) + |S| + 1,
\]
where $\nu$ is the number of orbits of the components of $(W|_S)$ that are not $S^3$ or lens spaces and $\mu$ is the number of orbits of components of $W|_S$ that are homeomorphic to $S^3$.
\end{theorem}

\begin{proof}
Let $(M,T)$ be the quotient orbifold and note that no edge of $T$ has infinite weight. Let $\ob{S}$ be the image of $S$ and note that each component of $(M,T)|_{\ob{S}}$ is orbifold-irreducible by the definition of ``system of summing spheres'' and Theorem \ref{est}. By Theorem \ref{factor uniqueness}, $\ob{S}$ contains an efficient subset. Let $S_0 \subset S$ be the preimage of a component of $\ob{S}$ that is not in our chosen efficient subset. Since $W$ is closed, each component of $W|_S$ that is not a component of $W|_{S_0}$ is a homeomorphic to $S^3$. Thus, passing from $S$ to $S\setminus S_0$ increases the right hand side of our inequality by $|G|/2 - 1$. Since $G$ does not act freely, it is not the trivial group. We conclude that it is enough to prove our result when $\ob{S}$ is efficient. Furthermore, by Theorem \ref{factor uniqueness}, we may prove it when $\ob{S}$ is \emph{any} efficient system of summing spheres for $(M,T)$, not merely the given one.

Recall that $\X(W;G) \geq |G|\netX(M,T)$. By Theorem \ref{locally thin} and Proposition \ref{basic fact}, there is a locally thin $\mc{H} \in \H(M,T)$ such that $\netX(\mc{H}) = \netX(M,T)$.  Furthermore, $\mc{H}^-$ contains an efficient set of summing spheres (Corollary \ref{thin sum}). Call them $\ob{S}$. Let $S$ be the preimage of $\ob{S}$ in $W$. By Theorem \ref{additivity}, we have
\[
\netX(M,T) = \netX((M,T)|_{\ob{S}}) - \X(\ob{S}). 
\]

Suppose some $(M_i, T_i)\cpt (M,T)|_{\ob{S}}$ is an $\mathbb{S}(0)$. Since $\ob{S}$ is efficient, the matching scar to every scar in $(M_i, T_i)$ is also in $(M_i, T_i)$. It follows that $(M,T)|_{\ob{S}} = (M_i, T_i)$. In which case, $T = \nil$, and $G$ acts freely, contrary to hypothesis. Henceforth, we may assume no $(M_i, T_i) \cpt (M,T)|_S$ is an $\mathbb{S}(0)$. Suppose that $(M_i, T_i)$ is an $\mathbb{S}(2)$. Then $\X(M_i, T_i) \geq -1$ and each component of the pre-image of $(M_i, T_i)$ in $W|_S$, is a copy of $S^3$. Similarly, if $(M_i, T_i)$ is an $\mathbb{S}(3)$, then $\X(M_i, T_i) \geq -1/2$ and each component of the pre-image of $(M_i, T_i)$ in $W|_S$ is also a copy of $S^3$. Conversely, by the classification of finite groups of diffeomorphisms of $S^3$,  if $W_i \cpt W|_S$ is a copy of $S^3$, then its image in $(M,T)|_S$ is an $\mathbb{S}(k)$ for some $k \in \{0,2,3\}$. Consequently, $\mu$ both the number of orbits of $S^3$ components of $W|_S$ and the number of $(M_i, T_i)$ that are $\mathbb{S}(2)$ or $\mathbb{S}(3)$. 

If $(M_i, T_i)$ is a ($S^3$, Hopf link), (lens space, core loop), (lens space, Hopf link), or a Euclidean double pillow, then $\X(M_i, T_i) = 0$. If a component $W_i$ of $W|_S$ covers $(M_i, T_i)$, then $W_i$ admits an invariant Heegaard torus, but no Heegaard sphere. Indeed, any $W_i$ that is a lens space has $\X(W_i) \geq 0$. Amalgamating a generalized Heegaard splitting of a 3-manifold produces a Heegaard surface and does not change $\X$. Thus, $\netX(W_i; G) \geq 0$, whenever $W_i$ is a lens space.

If $(M_i, T_i)$ is  neither an $\mathbb{S}(k)$ for $k \in \{0,2,3\}$ nor a ($S^3$, Hopf link), (lens space, core loop), (lens space, Hopf link), or a Euclidean double pillow, then by Proposition \ref{1/6 thm}, $\netX(M_i, T_i) \geq 1/6$. 

Consequently,
\[
\X(W;G) \geq |G|\netX(M,T) \geq |G|(\nu/6 - \mu - \X(\ob{S})) = |G|(\nu/6 - \mu) - \X(S). 
\]
Converting to genus, we have
\[
\g(W;G) \geq |G|(\nu/12 - \mu/2) + |S| + 1
\]
\end{proof}

\begin{example}\label{ex 1}
Consider a lens space $M$ containing an unknot $T$ such that there is a 2-sphere $\ob{S}$ in $M$ bounding a 3-ball in $M$ containing $T$. Give $T$ weight 2 and let $W$ be the 3-manifold such that there is an orientation preserving involution of $W$ whose quotient produces the orbifold $(M,T)$. Observe that $W$ is homeomorphic to the connected sum of $M$ with itself. This is depicted in Figure \ref{fig: exam 1}.

Let $S$ be the preimage of $\ob{S}$ in $W$. Since $\ob{S} \cap T = \nil$, the surface $S$ is the union of two spheres and $W|_S$ is the disjoint union of two lens spaces (each homeomorphic to $M$) and a copy of $S^3$. Thus, $|G|\nu/12 - |G|\mu/2 + |S| + 1 = 2$. Notice that $(M,T)$ has an orbifold Heegaard surface $\ob{H}$ with $\X(\ob{H}) = 1$. The surface $\ob{H}$ is a torus intersecting $T$ twice. The preimage of $\ob{H}$ in $W$ is an invariant Heegaard surface $H \subset W$ of genus 2. Thus, in this case, our inequality is sharp.
\end{example}

\begin{figure}[ht!]
\labellist
\small\hair 2pt
\pinlabel $M$  at 287 95
\pinlabel $M$  at 103 321
\pinlabel $M$  at 277 321
\pinlabel $T$ [tl] at 197 36
\pinlabel $\ob{S}$ [br] at 132 90
\pinlabel {$S$} [tr] at 56 258
\pinlabel {$S$} [tl] at 332 258
\pinlabel {$S^2 \times I$} [t] at 190 347
\pinlabel {$2:1$} [l] at 192 189
\pinlabel {$B^3$} at 186 101
\pinlabel {$\ob{H}$} [l] at 311 55
\pinlabel {$H$} [l] at 385 286
\endlabellist
\centering
\includegraphics[scale = 0.5]{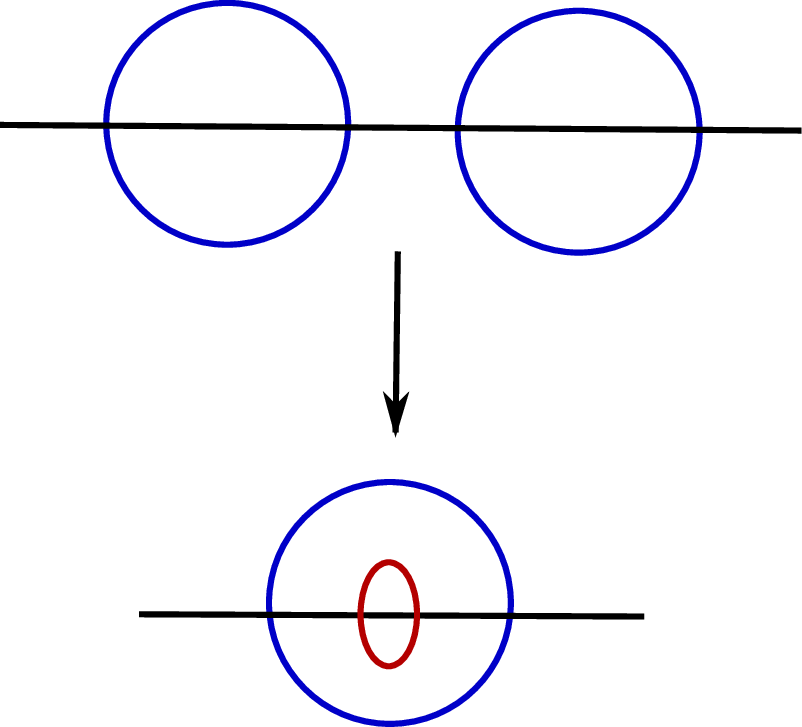}
\caption{The covering of the orbifold $(M,T)$ by $(W, \nil)$ in Example \ref{ex 1}. The manifold $M$ is a lens space and $W = M \times M$. The surface $S$ is the union of two disjoint spheres; it is a double cover of the unpunctured sphere $\ob{S}$. The knot $T$ is an unknot contained in 3-ball bounded by the sphere $S$. The horizontal lines represent bridge surfaces for $(M,T)$ and $(W, \nil)$, with $\ob{H}$ being a twice punctured torus and $H$ being a genus 2 surface.}
\label{fig: exam 1}
\end{figure}

\begin{example}\label{exam 2}
Let $M_1 = S^3$ and let $T_1$ be the spatial graph constructed as follows, and depicted in Figure \ref{fig: exam 2}. Choose a 2-bridge knot $K$ and and attach both an upper and lower tunnel to obtain $T_1$. The graph $T_1$ has four edges that intersect a bridge sphere $H_1$ and two edges (the upper and lower tunnels) that are disjoint from $H_1$. Give one of the edges intersecting $H_1$ a weight of 3 and give all the other edges weight 2. The subgraph $T_0$ of $T_1$ with edges of weight 2 is a trivial $\theta$-curve in $S^3$. 
The double-branched cover of $S^3$ over a cycle in $T_0$ is again $S^3$ and the third edge of $T_0$ lifts to an unknot. Taking another double-branched cover over that unknot again produces $S^3$. Thus, $S^3$ is a 4-fold orbifold cover over $(M_1, T_0)$. The edge of weight 3 in $T$ lifts to a knot $\kappa$ in the cover. The sphere $H_1$ lifts to a bridge sphere for $(S^3, \kappa)$ such that $|H_1 \cap \kappa| = 4$. In particular, $\kappa$ is either the unknot or a 2-bridge knot. Since $K$ is knotted, $\kappa$ is a 2-bridge knot. 

Let $W_1$ be the 3-fold branched cover of $S^3$ over $\kappa$.  Thus, $(M_1, T_1)$ is the quotient of $W_1$ by a group of diffeomorphisms of order 12 (isomorphic to the product of two groups of order 2 and one of order 3) . As long as $\kappa$ is not a torus knot or the figure-eight knot, $W_1$ is hyperbolic \cite[Chapter 1]{Cooper}.  Let $(M,T)$ be the distant sum of $(M_1, T_1)$ with $(M_2, \nil)$, a lens space having empty singular set. Let $\ob{S}$ be the summing sphere. The action of $G$ on $W_1$ extends to an action of (a group isomorphic to) $G$ on the connected sum $W$ of $W_1$ with 12 copies of $M_2$. The sphere $\ob{S}$ lifts to 12 copies of $S^2$. The right hand side of the inequality in Theorem \ref{counting lower bd} is then 14. The orbifold $(M, T)$ admits an orbifold Heegaard surface with orbifold characteristic 13/6. (It is a torus having three punctures of weight 2 and one of weight 3.) This lifts to an invariant Heegaard surface of $W$ having genus 14. So our lower bound is sharp in this case as well.
\end{example}

\begin{figure}[ht!]
\labellist
\small\hair 2pt
\pinlabel $3$ [l] at 239 179
\pinlabel $2$ [l] at 145 179
\pinlabel $2$ [r] at 92 179
\pinlabel $2$ [l] at 22 179
\pinlabel $2$ [t] at 109 245
\pinlabel $2$ [b] at 130 15
\pinlabel $H_1$ [l] at 280 162
\endlabellist
\centering
\includegraphics[scale = 0.5]{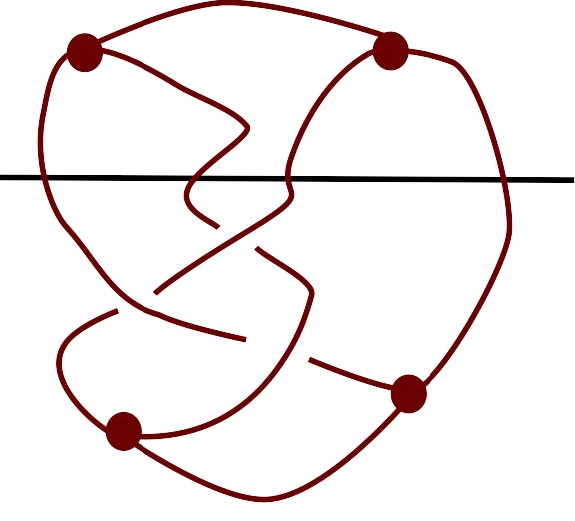}
\caption{The singular set $T_1$ and the bridge sphere $H_1$ for the orbifold $(M_1, T_1)$ in Example \ref{exam 2}}
\label{fig: exam 2}
\end{figure}

\subsection{Comparatively small factors}

A 3-orbifold $(M,T)$ is \defn{comparatively small} if each c-essential surface $F \subset (M, T)$ has $\X(F) > \X(M,T)$.

\begin{lemma}\label{lem: comp small}
Suppose that $(M,T)$ is a comparatively small nice orbifold. Then $\netX(M,T) = \X(M,T)$.
\end{lemma}
\begin{proof}
By definition, $\X(M,T) \geq \netX(M,T)$. We now show $\X(M,T) \leq \netX(M,T)$. Let $\mc{H} \in \H(M,T)$ be locally thin, with $\netX(\mc{H}) = \netX(M,T)$. We claim $\mc{H}^- = \nil$. 

Suppose, for a contradiction, that $\mc{H}^- \neq \nil$. The dual digraph to $\mc{H}$ has at least one source and at least one sink. Each such source and sink corresponds to a $(C, T_C) \cpt (M, T) \setminus \mc{H}$ with $\boundary_- C \subset \boundary M$. Let $(C, T_C)$ be one such and let $(D, T_D) \cpt (M, T) \setminus \mc{H}$ be distinct from $(C, T_C)$ but have $\boundary_+ D =\boundary_+ C$. Then $\mc{H}^- \cap \boundary_- D \neq \nil$; let $F$ be a component. Since $\mc{H}$ is locally thin, $F$ is c-essential in $(M, T)$ (Theorem \ref{locally thin}). Since $(M, T)$ is comparatively small, $\X(F) > \X(M, T)$. By Lemma \ref{low N},
\[
N(C, T_C) + \X(\boundary_- C) = \X(\boundary_+ C) = \X(\boundary_+ D) \geq \X(F) > \X(M, T) \geq \netX(M, T). 
\]
Since there are at least two such $(C, T_C)$, by \eqref{Fundamental} and the niceness of $(M, T)$, we conclude $\netX(M,T) > \netX(M, T)$, a contradiction. Thus, $\mc{H}^-= \nil$. Consequently, $\mc{H}$ is an orbifold Heegaard surface for $(M,T)$ and so $\netX(\mc{H}) \geq \X(M,T)$.
\end{proof}

\begin{theorem}\label{comp small}
Suppose that $(M,T)$ is a nice 3-orbifold. Assume also that for some (hence, any) efficient system of summing spheres $S$ each component of $(M,T)|_S$ is comparatively small. Then
\[
\X(M,T) \geq \netX(M,T) \geq \X((M,T)|_S)- \X(S).
\]
\end{theorem}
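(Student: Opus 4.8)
First, the inequality $\X(M,T)\geq\netX(M,T)$ is immediate: once a transverse orientation is fixed, any orbifold Heegaard surface $H$ for $(M,T)$ lies in $\H(M,T)$ with $H=\mc{H}^+$ and $\mc{H}^-=\nil$, so $\netX(H)=\X(H)$, and minimizing over $H$ gives $\netX(M,T)\leq\X(M,T)$.

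For the other inequality, I would invoke Theorem \ref{additivity} to rewrite $\netX(M,T)$ as $\netX((M,T)|_S)-\X(S)$, reducing the claim to $\netX((M,T)|_S)\geq\X((M,T)|_S)$; since both invariants add over the components of the disjoint union $(M,T)|_S$, it suffices to show $\netX(N,T_N)\geq\X(N,T_N)$ for each component $(N,T_N)$, which is a nice orbifold, is orbifold-irreducible because $S$ is a system of summing spheres, and is comparatively small by hypothesis. The cases where $(N,T_N)$ is $\mathbb{S}(0)$, $\mathbb{S}(2)$, or $\mathbb{S}(3)$ I would dispose of directly, since there the minimizing surface is a connected sphere and $\netX(N,T_N)=\X(N,T_N)$; so assume $(N,T_N)$ is none of these. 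Using Proposition \ref{basic fact}, choose a locally thin $\mc{H}\in\H(N,T_N)$ realizing $\netX(\mc{H})=\netX(N,T_N)$. The crux is to prove $\mc{H}^-=\nil$, for then $\mc{H}$ is a connected orbifold Heegaard surface and $\netX(N,T_N)=\netX(\mc{H})=\X(\mc{H})\geq\X(N,T_N)$, finishing the argument.

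To prove $\mc{H}^-=\nil$ I would argue by contradiction, assuming $\mc{H}^-\neq\nil$. By Theorem \ref{locally thin}(2), $\mc{H}^-$ is c-essential, so each of its components $F$ satisfies $\X(F)>\X(N,T_N)\geq\netX(\mc{H})$, using comparative smallness and the first inequality. Because $(N,T_N)$ is orbifold-irreducible and not exceptional, the case analysis in the proofs of Propositions \ref{basic fact} and \ref{1/6 thm} yields $N(C,T_C)\geq 0$ for every $(C,T_C)\cpt(N,T_N)\setminus\mc{H}$, and also that every component of $\mc{H}^-$ and of $\boundary N$ has nonnegative orbifold characteristic. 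Since $(N,T_N)$ is connected and $\mc{H}^-\neq\nil$, each thick surface $P\cpt\mc{H}^+$ has an adjacent compressionbody $(C,T_C)$ with $\boundary_+ C=P$ and some thin surface $F\cpt\boundary_- C$ (otherwise the two compressionbodies meeting $P$ would constitute all of $(N,T_N)$ and force $\mc{H}^-=\nil$), whence $\X(P)=N(C,T_C)+\X(\boundary_- C)\geq\X(F)>\netX(\mc{H})$. Next, acyclicity of the dual digraph of $\mc{H}$ — exploited as in the proof of Proposition \ref{1/6 thm} — supplies two distinct compressionbodies $(C_1,T_{C_1})$ and $(C_2,T_{C_2})$ whose negative boundaries are disjoint from $\mc{H}^-$, so that $\X(\boundary_+ C_i)-\X(\boundary_- C_i\cap\mc{H}^-)=\X(\boundary_+ C_i)>\netX(\mc{H})$ for $i=1,2$. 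I would then rewrite the identity \eqref{Fundamental} in the form $2\netX(\mc{H})=\sum_{(C,T_C)}\bigl(\X(\boundary_+ C)-\X(\boundary_- C\cap\mc{H}^-)\bigr)$, noting that every summand is at least $N(C,T_C)\geq 0$; the two summands coming from $C_1$ and $C_2$ alone then force $2\netX(\mc{H})\geq\X(\boundary_+ C_1)+\X(\boundary_+ C_2)>2\netX(\mc{H})$, the desired contradiction.

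The step showing $\mc{H}^-=\nil$ is, I expect, the main obstacle: it requires feeding comparative smallness (a lower bound on thin surfaces), local thinness together with orbifold-irreducibility (nonnegativity of $N(C,T_C)$, hence lower bounds on thick surfaces), and the acyclic structure of $\mc{H}$ (two ``outermost'' compressionbodies) simultaneously into \eqref{Fundamental} until it becomes inconsistent. Some care will also be needed when $\boundary N\neq\nil$ — where the boundary terms of \eqref{Fundamental} must be absorbed into the summands $\X(\boundary_+ C)-\X(\boundary_- C\cap\mc{H}^-)$ as above — and in verifying the routine claim that every component of $(M,T)|_S$ is a nice orbifold.
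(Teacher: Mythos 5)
Your proposal is correct and follows essentially the same route as the paper's proof: both reduce to the components of $(M,T)|_S$ via the additivity of $\netX$ along summing spheres realized as thin surfaces, and then use comparative smallness, c-essentiality of $\mc{H}^-$, Lemma \ref{low N}, the source/sink structure of the acyclic dual digraph, and the identity \eqref{Fundamental} to show that a locally thin representative on each orbifold-irreducible factor has $\mc{H}^-=\nil$, hence is a genuine orbifold Heegaard surface. The only differences are organizational: you invoke Theorem \ref{additivity} and take a per-component minimizer from Proposition \ref{basic fact} (which makes the contradiction $2\netX(\mc{H})>2\netX(\mc{H})$ particularly clean), whereas the paper splits its global locally thin minimizer along $S\subset\mc{H}^-$ and thins each restricted surface $\mc{J}_i$.
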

\begin{proof}
If $(M,T)$ is orbifold-irreducible, the result is vacuously true. Suppose that $(M,T)$ is orbifold-reducible. By Theorem \ref{factor uniqueness}, each component of $(M,T)|_S$ is comparatively small for any efficient system of summing spheres $S$. By definition, $\X(M,T) \geq \netX(M,T)$. By Theorem \ref{additivity}, $\netX(M,T) \geq \netX((M,T)|_S) - \X(S)$ for an efficent system of summing spheres $S$. By Lemma \ref{lem: comp small}, $\netX((M,T)|_S) = \X((M,T)|_S)$, and the result follows.
\end{proof}

\begin{theorem}\label{thm: equiv compare small}
Suppose that when $S \subset W$ is an invariant system of summing spheres, then every component of $W|_S$ is equivariantly comparatively small. Then:
\[
\g(W;G) \geq \g(W|_S;G).
\]
\end{theorem}
\begin{proof}
Without loss of generality, we may assume that $W$ is reducible. Let $(M,T)$ be the quotient orbifold. Observe that it is nice and orbifold-reducible. Let $\ob{S}$ be an efficient system of summing spheres for $(M,T)$. We claim that each component of $(M,T)|_S$ is comparatively small. To see this, suppose that $(M_i, T_i) \cpt (M,T)|_S$ and that $\ob{F} \subset (M_i, T_i)$ is a c-essential surface. Let $W_i$ be a component of the preimage of $(M_i, T_i)$. Since $(M_i, T_i)$ is orbifold-irreducible, $\ob{F}$ is not a sphere. Thus, no component of the preimage of $\ob{F}$ to $W$ is a sphere. Let $F$ be the preimage of $\ob{F}$ in $W|_S$, with $F_i = F \cap W_i$. Let $G_i \subset G$ be the stabilizer of $W_i$. We have $\X(\ob{F}) =\X(F_i)/|G_i|$. 

By the equivariant loop theorem \cite{MY-EDL}, $F$ is incompressible. Suppose that some $F' \cpt F$ is $\boundary$-parallel in $W$. Let $W' \cpt W \setminus F'$ be homeomorphic to $F' \times I$. We may assume that $F'$ was chosen so that the interior of $W'$ is disjoint from $F$. The image of $W'$ in $(M,T)$ is the quotient of a trivial product compressionbody (namely $W'$) by a finite group of diffeomorphisms (the stabilizer of $F'$). By Lemma \ref{quotient info}, it a product trivial compressionbody. Thus, $\ob{F}$ bounds a trivial product compressionbody with a component of $\boundary W$ and so $\ob{F}$ is not c-essential, a contradiction. Thus, $F$ is essential in $W|_S$.  Since $W_i$ is equivariantly comparatively small, $\X(F_i) > \X(W_i)$. Thus,
\[
\X(\ob{F}) = \X(F_i)/|G_i| > \X(W_i)/|G_i| = \X(M_i, T_i).
\]
Thus, each component of $(M,T)|_{\ob{S}}$ is comparatively small. Our result follows from Theorem \ref{comp small} after multiplying by $|G|$.
\end{proof}

\section{Upper Bounds}\label{sec: upper}
In this section we use multiple orbifold Heegaard surfaces to construct upper bounds on the equivariant Heegaard genus of $W$ and explain why the bound is sharp. Throughout this section, we use the inverse operation to ``undoing a removable arc'' mentioned earlier.

\begin{definition}\label{create removable}
Suppose that $(M,T)$ is a nice orbifold and that $\mc{H} \in \H(M,T)$. Suppose also that an edge $\alpha$ of $T \setminus \mc{H}$ is a ghost arc contained in $(C, T_C) \cpt (M,T)\setminus \mc{H}$. Choose a cut disc or semi-cut disc $D \subset (C, T_C)$ intersecting $\alpha$ exactly once. Choose an arc $\kappa$ in $D$ from $\boundary D$ to $D \cap \alpha$. In a neighborhood of $D$ in $M$, isotope $\alpha$ by an isotopy following $\kappa$, so that the interior of $\alpha$ is pushed across $\boundary_+ C$, as in Figure \ref{fig: create remov}. Dually, we may isotope $\mc{H}$. This converts $\mc{H}$ into a new $\mc{J} \in \H(M,T)$ such that
\[
\netX(\mc{J}) = \netX(\mc{H}) + 2(1 - 1/\weight(\alpha)).
\]
The ghost arc $\alpha$ is converted into the union of two vertical arcs and a bridge arc on the opposite side of $\boundary_+ C$ from the vertical arcs. We call this move \defn{creating a removable arc from $\alpha$}.
\end{definition}

\begin{figure}[ht!]
\centering
\includegraphics[scale=0.8]{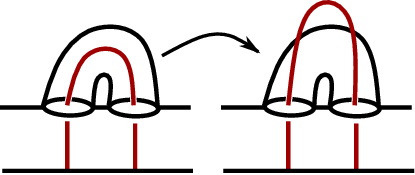}
\caption{Creating a removable arc}
\label{fig: create remov}
\end{figure}

In 3-manifold theory, generalized Heegaard splittings may be amalgamated to create Heegaard splittings \cite{Schultens}. In our situation, it is not always possible to amalgamate multiple orbifold Heegaard surfaces to create Heegaard surfaces. Ghost arcs are the obstruction and creating removable arcs allows us to amalgamate. 

\begin{definition}
Suppose that $(M,T)$ is an orbifold and that $\mc{H} \in \H(M,T)$. Suppose that $(M_i, T_i) \cpt (M,T)\setminus \mc{H}^-$ for $i = 1,2$ and that $F = \boundary M_1 \cap \boundary M_2 \neq \nil$. Let $H_i = \mc{H}^+ \cap M_i$. We say that $H_1$ and $H_2$ are \defn{amalgable} if whenever $e_i \subset T_i \setminus \mc{H}$ for $i = 1,2$ are edges sharing an endpoint, then at least one is not a ghost arc.
\end{definition}

\begin{proposition}[Amalgamation]\label{amalg}
If $H_1$ and $H_2$ are amalgable, then there exists $\mc{J} \in \H(M,T)$ and $H \cpt \mc{J}^+$ such that $\mc{J} \setminus H = \mc{H} \setminus (H_1 \cup H_2 \cup F)$ and $\netX(\mc{J}) = \netX(\mc{H})$. Furthermore, each ghost arc of $T \setminus \mc{J}$ contains at least one ghost arc of $T \setminus \mc{H}$.
\end{proposition}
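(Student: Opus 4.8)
The plan is to run the standard amalgamation of generalized Heegaard splittings in the orbifold / vp-compressionbody setting, using the amalgability hypothesis to handle the edges of $T$ that cross $F$. Since no component of $\mc{H}^-$ lies in the interior of $M_1$ or $M_2$, each $M_i$ is a union of two vp-compressionbodies along its thick surface: $M_i = A_i \cup_{H_i} B_i$ with $\boundary_+ A_i = \boundary_+ B_i = H_i$ and $\boundary_- A_i, \boundary_- B_i \subset \mc{H}^- \cup \boundary M$; relabelling if necessary, $F \cpt \boundary_- A_1$ and $F \cpt \boundary_- A_2$, so the portion of $(M,T)$ meeting $F$ is $B_1 \cup_{H_1} A_1 \cup_F A_2 \cup_{H_2} B_2$. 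Any edge of $T \setminus \mc{H}$ with an endpoint on $F$ lies in $A_1$ or $A_2$ and is either a ghost arc or a vertical arc of that vp-compressionbody (it cannot be a bridge arc or a core loop), and amalgability says precisely that at each point of $F \cap T$ the two such edges are not both ghost arcs.

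\emph{Construction of $H$.} Fix complete collections of sc-discs $\Delta_1$, $\Delta_2$ for $A_1$, $A_2$. Proceeding as in the classical amalgamation, build a new connected thick surface $H$ inside $B_1 \cup_{H_1} A_1 \cup_F A_2 \cup_{H_2} B_2$ by tubing $H_1$ and $H_2$ together across $F$: take the cores of the $1$-handles of $A_1$ incident to $F$ together with the ghost arcs of $A_1$ that touch $F$, push their $F$-ends through $A_2$ to $H_2$ — routing them along the vertical arcs of $A_2$ where the edges of $T$ meeting $F$ continue into $A_2$, and through a product collar of $F$ otherwise — and let $H$ be the result of tubing $H_2$ along the resulting arcs, made transverse to $T$. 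Amalgability is exactly what lets these $F$-ends reach $H_2$ without being obstructed by a ghost arc of $A_2$. Put $\mc{J} = \big(\mc{H} \setminus (H_1 \cup H_2 \cup F)\big) \cup H$, with $\mc{J}^- = \mc{H}^- \setminus F$ and $\mc{J}^+ = \big(\mc{H}^+ \setminus (H_1 \cup H_2)\big) \cup H$; then $\mc{J} \setminus H = \mc{H} \setminus (H_1 \cup H_2 \cup F)$ holds by construction.

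\emph{Verification.} First, $H$ cuts $B_1 \cup A_1 \cup A_2 \cup B_2$ into two vp-compressionbodies, each with positive boundary $H$: one is $B_2$ with trivial $1$-handles attached, and the other is built from $B_1 \cup A_1$ by adjoining a collar of $F$ and the pieces of $A_2$ cut off by the tubes; that the second is a vp-compressionbody follows from the dual characterization, since $\boundary$-reducing it along $\Delta_1 \cup \Delta_2$ and a complete collection of sc-discs for $B_1$ leaves only trivial ball and product compressionbodies. Orienting $\mc{J}$ as inherited from $\mc{H}$ (reversing on $H$ if needed to keep the dual digraph acyclic) gives $\mc{J} \in \H(M,T)$. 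Second, for the ghost arcs: an edge of $T \setminus \mc{H}$ disjoint from $F$ is unchanged, while an edge meeting $F$ becomes a union $e_1 \cup e_2$ of edges of $T \setminus \mc{H}$, at least one of which (by amalgability) is a vertical arc, so $e_1 \cup e_2$ meets $H$ and is not a ghost arc of $T \setminus \mc{J}$; hence every ghost arc of $T \setminus \mc{J}$ is disjoint from $F$ and is already a ghost arc of $T \setminus \mc{H}$, so it contains one. Third, applying \eqref{Fundamental} to $\mc{H}$ and to $\mc{J}$ and noting that the only complementary regions that change are $B_1, A_1, A_2, B_2$ merging into the two new ones, the identity $\netX(\mc{J}) = \netX(\mc{H})$ reduces to $\X(H) = \X(H_1) + \X(H_2) - \X(F)$, which is the orbifold analogue of additivity of Euler characteristic under amalgamation and is checked directly from the construction of $H$ (the tubes account for the $\chi$-part, and amalgability makes the bookkeeping of the punctures of $H$ against those of $H_1$, $H_2$, $F$ come out correctly).

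The main obstacle is this verification — concretely, (a) routing the tubing arcs so that they honestly reach $H_2$, and (b) confirming that the $B_1$-side region is a vp-compressionbody. Both are routine in the classical setting, but in the vp-setting one must keep careful track of weighted $1$-handles and of ghost arcs of $T$; this is exactly where amalgability is essential, since an edge of $T$ that is a ghost arc on both sides of $F$ would, after $F$ is removed, become a weighted edge disjoint from $H$ sitting in a position that obstructs both the routing in (a) and the sc-disc decomposition in (b).
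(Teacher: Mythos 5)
Your construction is essentially the paper's (you describe the amalgamation dually, by tubing $H_2$ along extended cores, where the paper compresses $H_1$ along a complete disc system and tubes through the product region over $F$), but two of your verification steps have genuine gaps. First, the ghost-arc clause: your claim that ``every ghost arc of $T \setminus \mc{J}$ is disjoint from $F$'' is false, and with it your argument that an edge crossing $F$ ``meets $H$.'' After amalgamation the points of $T \cap H_1$ are no longer on the splitting surface, and neither are the points of $T \cap H_2$ lying in the discs you delete when you tube along an edge of $T$; edges of $T \setminus \mc{H}$ therefore concatenate across such points. For example, take a ghost arc of $A_1$ ending at a puncture of $F$ whose continuation into $A_2$ is (by amalgability) a vertical arc ending at $p \in H_2$: the tube is a neighborhood of this edge, so $p$ lies in a deleted disc, and if the edge continues in $B_2$ as a vertical arc down to a thin surface, the concatenation is a ghost arc of $T \setminus \mc{J}$ that does cross $F$. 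The conclusion of the proposition still holds for it (it contains the $A_1$ ghost arc), but proving the ``furthermore'' clause requires the traversal argument the paper gives: a ghost arc of $T\setminus\mc{J}$ that crosses $F$ must run into a tube/scar, and only ghost arcs (and core loops) of $T\setminus\mc{H}$ were allowed to meet the chosen disc systems, so it contains one of them.

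Second, your verification that the side containing $B_1$ is a vp-compressionbody does not work as stated. The discs of $\Delta_1$ and your complete collection for $B_1$ have boundary on $H_1$, which after amalgamation lies in the \emph{interior} of that piece, so they are not sc-discs for it; worse, the $\Delta_1$-discs meet the tubes (up to isotopy they are the co-cores of the new $1$-handles and belong to the $B_2$ side, where they do the work in the paper's proof). What is needed is $\Delta_2$ together with a complete collection for $B_1$ whose discs are extended vertically through the product region between the compressed $H_1$ and $H_2$ so that their boundaries lie on $H$, disjoint from the tubes and from the scars of $\Delta_2$; arranging that disjointness is exactly where the paper uses both amalgability and a careful initial choice of $\Delta_1, \Delta_2$ (each disc meets $T$ only in ghost arcs or core loops, and each ghost arc and core loop meets exactly one disc), a normalization your setup omits. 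Related loose ends: restricting your tubing arcs to handles of $A_1$ ``incident to $F$'' leaves the remaining handles of $A_1$ to be absorbed into the $B_1$-side piece, which is part of this same unverified step, and the acyclicity of the dual digraph should be argued (the subgraph dual to $H_1$, $F$, $H_2$ has a single source and sink and is replaced by one edge) rather than fixed by ``reversing the orientation on $H$ if needed.''
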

\begin{proof}
Our proof is similar to that of \cite[Theorem 4.1]{Saito}. Let $(C, T_C), (D, T_D) \cpt (M,T) \setminus \mc{H}$ be the components with $H_1 = \boundary_+ C$ and $H_2 = \boundary_+ D$. Choose complete collections of sc-discs $\Delta_C \subset (C, T_C)$ and $\Delta_D \subset (D, T_D)$. We choose these discs somewhat carefully. They must have the properties that the only edges of $T_C$ and $T_D$ intersecting $\Delta_C$ and $\Delta_D$ respectively are ghost arcs or core loops and that each ghost arc and core loop intersects exactly one such disc. It is not difficult to see that $\Delta_C$ and $\Delta_D$ can be chosen to satisfy these conditions. For simplicity in the discussion, assume that $F$ is connected. If it is not, the proof goes through with only minor changes in wording.

Boundary reduce $(C, T_C)$ and $(D, T_D)$ using $\Delta_C$ and $\Delta_D$  to obtain $(C', T'_C)$ and $(D', T'_D)$ respectively. One component of each is a trivial product compressionbody having $F$ as a boundary component. Observe that these components are disjoint from any core loops in $T_C$ or $T_D$. Let $X$ be their union with $T_X = T \cap X$. Notice that $X$ is homeomorphic to $F \times I$. Let $\boundary_C X$ be the component of $\boundary X$ intersecting $\boundary_+ C$ and $\boundary_D X$ the component intersecting $\boundary_+ D$. Each component of $T_X$ is a vertical arc. Consider the scars $\delta_C$ on $\boundary_C X$ resulting from the $\boundary$-reduction along $\Delta_C$. Some of them are unpunctured discs and others are punctured discs. Each punctured disc is incident to an arc of $T_X$ whose other endpoint may lie on $F$, but does not lie in the scars $\delta_D \subset \boundary_D X$ resulting from the $\boundary$-reduction along $\Delta_D$. Extend $ B = \delta_C \cap \boundary_C X$ vertically through $X$ via a solid tubes $B \times I$. The \defn{frontier} of the tubes is $\boundary B \times I$. Since no component of $T_X$ is incident to both $\delta_C$ and $\delta_D$, we may shrink the tubes of $B$ containing an arc of $T_X$ and isotope the ends on $\boundary_D X$ of the other tubes so that the tubes are disjoint from $\delta_D$. 

Delete the discs that are the ends of the tubes from $H_2$ and attach the frontiers of the tubes. When we compressed $H_1$ along $\Delta_C$, we used a regular neighborhood that can be parameterized as $\Delta_C \times I$. Reattach $\boundary \Delta_C \times I$ to the ends on $\boundary_C X$ of the frontiers of the tubes. Call the surface we created $H$. 

We claim that $H$ is a vp-bridge surface for $(M', T') = (M_1, T_1) \cup_F (M_2, T_2)$. Let $(D', T'_D)$ be the other vp-compressionbody of $(M,T) \setminus \mc{H}$ with $\boundary_+ D' = H_2$ and $(C', T'_C)$ the other vp-compressionbody with $\boundary_+ C' = H_1$. Let $(U, T_U)$ be the component of $(M', T') \setminus H$ containing $(D', T'_D)$. Let $(V, T_V)$ be the other component. Observe that the union of $\Delta_C$ with a complete set of sc-discs for $(D', T'_D)$ is a complete set of sc-discs for $(U, T_U)$. Thus, $(U, T_U)$ is a vp-compressionbody. Let $\Delta'_C$ be a complete set of sc-discs for $(C', T'_C)$. We can extend $\boundary \Delta'_C \cap \boundary_C X$ through $X$ to lie on $\boundary_D X$. As before, we can ensure they miss the scars of $\Delta_D$. The union of these discs with $\Delta_D$ is then a complete set of sc-discs for $(V, T_V)$, showing that it is also a vp-compressionbody. Thus, $H$ is a vp-bridge surface for $(M', T')$.

Let $\mc{J}$ be as in the statement of the proposition. The dual digraph to $\mc{H}$ contains a connected subgraph $\alpha$ with edges that corresponding to the surfaces $H_1$, $F$, and $H_2$. It has a single source and a single sink. Give $H$ the transverse orientation inherited from $H_1$ and $H_2$. The dual digraph to $\mc{J}$ is obtained from that of $\mc{H}$ by replacing $\alpha$ with a single edge. Since the dual digraph to $\mc{H}$ was acyclic, so is the dual digraph to $\mc{J}$. Thus, $\mc{J} \in \H(M,T)$. The computation $\netX(\mc{J}) = \netX(\mc{H})$ is easily verified by compressing $H$ along the discs $\Delta_C$ to recover $H_2$. 

Finally, suppose that $\alpha$ is a ghost arc of $T \setminus \mc{J}$. Traversing $\alpha$ we start at a thin surface in $\mc{J}^-$ or vertex of $T$ and end on a thin surface of $\mc{J}^-$ or vertex of $T$. If during the traversal, we never traverse an arc of $T_X$, then $\alpha$ is a ghost arc of $T \setminus \mc{H}$. If we do traverse an arc of $T_X$, then $\alpha$ must have one endpoint in $\delta_C \cup \delta_D$, as otherwise it wouldn't be a ghost arc of $T \setminus \mc{J}$. But this implies that it contains a ghost arc of $T \setminus \mc{H}$.
\end{proof}

\subsection{Examples of super additivity}\label{sec: super}

\begin{theorem}\label{super-add}
For $k \geq 2$, there exist reducible $W$ having a finite cyclic group of diffeomorphisms $G$ of order $k$, and an invariant essential sphere $S$ dividing $W$ into two irreducible manifolds, such that
\[
\g(W;G)  = \g(W|_S; G)  + k - 1.
\]
Furthermore, $\g(W;G)$ can be arbitrarily high when $G$ is a cyclic group of fixed order.
\end{theorem}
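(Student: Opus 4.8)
The plan is to push a maximal super-additivity phenomenon for tunnel number through cyclic branched covers, as in Section~\ref{Addsubadd}, and to make the defect $|G|-1$ arise from the obstruction to amalgamation of Proposition~\ref{amalg}. Fix $k\ge 2$, let $G$ be cyclic of order $k$, and take an infinite family of pairs of prime knots $K_1,K_2\subset S^3$ from the literature on non-additivity of tunnel number with: (a) $\t(K_1\#K_2)=\t(K_1)+\t(K_2)+1$; (b) none of $K_1,K_2,K_1\#K_2$ has a $(g,b)$-bridge surface with $b\ge 1$ and $g+b=\t(\cdot)+1$; (c) bridge numbers large relative to $k$, concretely $\b(K_i)\ge \tfrac{k}{k-1}(\t(K_i)+1)$ and likewise for $K_1\#K_2$; (d) each $(S^3,K_i)$ is comparatively small; (e) $\t(K_1\#K_2)$ unbounded along the family. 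Give $K_1,K_2$ (hence $K_1\#K_2$) weight $k$, and let $W_i\to(S^3,K_i)$ and $W\to(S^3,K_1\#K_2)$ be the $k$-fold cyclic branched covers with $G$ the deck group. Then $W=W_1\#W_2$ equivariantly, the $W_i$ are irreducible (Smith Conjecture, as the $K_i$ are nontrivial and prime), and the lift $S$ of a twice-punctured summing sphere $\bar S$ is a single equivariant essential sphere with $W|_S=W_1\sqcup W_2$.

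\textbf{Reduction and the upper bound.} By Lemma~\ref{quotient info}, $\g(W;G)=\tfrac k2\X(S^3,K_1\#K_2)+1$ and $\g(W|_S;G)=\tfrac k2(\X(S^3,K_1)+\X(S^3,K_2))+2$, so the claimed identity is equivalent to
\[
\X(S^3,K_1\#K_2)=\X(S^3,K_1)+\X(S^3,K_2)+2 .
\]
Conditions (b)--(c) force $\X(S^3,K_i)=2\t(K_i)$, attained only by a connected ``core-loop'' orbifold Heegaard surface $H_i$ of genus $\t(K_i)+1$ disjoint from $K_i$. For the upper bound I would form $\mc{H}=H_1\sqcup\bar S\sqcup H_2\in\H(S^3,K_1\#K_2)$ with $\mc{H}^-=\bar S$, so $\netX(\mc{H})=\X(H_1)+\X(H_2)-\X(\bar S)=2\t(K_1)+2\t(K_2)+\tfrac2k$. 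Since $K_i$ misses $H_i$, the arc of $K_1\#K_2$ meeting $\bar S$ is a ghost arc on each side, so $H_1,H_2$ are not amalgable; creating a removable arc from one of them (Definition~\ref{create removable}) raises $\netX$ by $2(1-\tfrac1k)$ and makes the sides amalgable, and amalgamating (Proposition~\ref{amalg}) produces a connected orbifold Heegaard surface of characteristic $2\t(K_1)+2\t(K_2)+2$. (This is also the $c=1$, $n=2$ case of Theorem~\ref{equiv-upper}, so the examples will show that bound is sharp.)

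\textbf{Lower bound.} Let $H$ be a connected orbifold Heegaard surface for $(S^3,K_1\#K_2)$ and thin it to a locally thin $\mc{H}'\in\H(S^3,K_1\#K_2)$ (Theorem~\ref{locally thin}); by Lemma~\ref{no incr}, $\X(H)=\netX(H)\ge\netX(\mc{H}')$. By Corollary~\ref{thin sum} and Theorem~\ref{factor uniqueness} we may take $\bar S\subset(\mc{H}')^{-}$, and then Theorem~\ref{additivity} together with (d) (so $\netX(S^3,K_i)=\X(S^3,K_i)=2\t(K_i)$, attained only at a core-loop position) gives
\[
\netX(\mc{H}')\ \ge\ \netX(S^3,K_1\#K_2)\ =\ \X(S^3,K_1)+\X(S^3,K_2)-\X(\bar S)\ =\ 2\t(K_1)+2\t(K_2)+\tfrac2k .
\]
The remaining $2(1-\tfrac1k)$ comes from the ghost-arc obstruction: since $(\mc{H}')^{-}$ contains $\bar S$ and the induced bridge surfaces on its two sides are core-loop positions, the thick surfaces immediately above and below $\bar S$ are never amalgable, so the thinning of $H$ to $\mc{H}'$ cannot proceed by untelescopings and consolidations alone; at some stage a move eliminating a ghost arc incident to $\bar S$ (the move dual to ``creating a removable arc'') must occur, contributing $2(1-\tfrac1k)$ to the gap $\X(H)-\netX(\mc{H}')$. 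Hence $\X(H)\ge\X(S^3,K_1)+\X(S^3,K_2)+2$, and the identity follows; unboundedness is immediate since $\g(W;G)=k\,\t(K_1\#K_2)+1$, which grows along the family with $k$ fixed.

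\textbf{Main obstacle.} I expect the real difficulty to be twofold. First, producing an infinite family meeting (a)--(e) with $|G|$ fixed: one needs prime knots with maximal tunnel-number degeneration, comparatively small, whose minimal-genus exterior splittings are not meridionally stabilised, and whose bridge numbers are at least $\tfrac{k}{k-1}(\t(\cdot)+1)$ — this last condition (forcing $\X(S^3,K)=2\t(K)$ to persist as $\t(K)\to\infty$) is the delicate quantitative point. Second, making the lower-bound heuristic rigorous, i.e.\ proving $\X(H)\ge\netX(\mc{H}')+2(1-\tfrac1k)$ rather than merely $\X(H)\ge\netX(\mc{H}')$; I would do this by following the ghost arc of $K_1\#K_2$ incident to the thin summing sphere through the entire thinning sequence and showing the only move able to remove it is the one inverse to Definition~\ref{create removable}.
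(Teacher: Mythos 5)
Your outline matches the paper's in broad strokes (branched covers of a composite knot, upper bound via Definition \ref{create removable} plus Proposition \ref{amalg}, lower bound via thin position), but there are two genuine gaps, and they sit exactly where the real work is. First, the existence of the input knots is assumed rather than proved: you ask the literature for prime knots satisfying (a)--(e) simultaneously, with $\t(K_1\#K_2)$ unbounded and $|G|$ fixed. The known maximal-degeneration examples (\cite{MSY}, \cite{MR}) are tunnel number one or do not come packaged with your conditions (b)--(d), and for unbounded tunnel number the only known route to such families is itself through high-distance Heegaard splittings --- which is what the paper does: it takes knots whose exteriors have genus $t+1$ splittings of huge Hempel distance (via \cite{MMS}) and uses Theorem \ref{SchTo} to control \emph{all} essential surfaces and \emph{all} bridge surfaces at once. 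That single hypothesis is what delivers comparative smallness, the value $\X(S^3,K_i)=2t$, and --- crucially --- the uniqueness statement of Lemma \ref{verify min}, which pins down every low-complexity locally thin multiple bridge surface of $(S^3,K)$ up to parallel twice-punctured spheres. Your conditions (b)--(c) do not obviously even force $\X(S^3,K_i)=2\t(K_i)$ for all $(g,b)$ with $b$ large (one needs genus-$g$ bridge number control, not just the classical bridge number), let alone the "attained only by a core-loop position" uniqueness you invoke later.

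Second, the step you flag as the "main obstacle" is indeed a gap as written: knowing $\X(H)\ge\netX(\mc{H}')$ plus the informal claim that "a ghost-arc-removing move must occur" does not yield the extra $2(1-1/k)$, because nothing in your setup rules out a locally thin $\mc{H}'$ whose thick surfaces adjacent to $\bar S$ are \emph{punctured} bridge surfaces with $\netp(\mc{H}')\ge 0$, in which case no unperturbation is forced. The paper closes this with a quantitative bookkeeping argument (Lemma \ref{Heeg gen}): the amalgamated upper bound caps the total drop $\X(J)-\netX(\mc{J})$ at $2(1-1/w)$, Lemma \ref{verify min} identifies $\mc{J}$ and hence gives $\netp(\mc{J})=-2$ while $\netp(J)\ge 0$, and since the only thinning moves that decrease $\netp$ are unperturbation and undoing a removable arc (each costing exactly $2(1-1/w)$), exactly one such move occurs and nothing else costs anything, forcing $\X(S^3,K)=4t+2$. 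So the missing ingredients are (i) an actual construction of the examples with enough rigidity to classify their locally thin decompositions, and (ii) the $\netp$-style accounting (or an equivalent) that converts "a ghost arc must be removed" into the exact deficit $2(1-1/k)$; comparative smallness alone controls the thin surfaces but not the punctures on the thick surfaces, which is precisely what the deficit hinges on.
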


Associated to each Heegaard surface $H$ of genus at least 2 of a compact 3--manifold $X$, is a nonnegative integer invariant $d(H)$, called \defn{Hempel distance} \cite{Hempel}. When $d(H) = 0$, the Heegaard surface $H$ is reducible. When $d(H) = 1$, the Heegaard surface $H$ can be untelescoped to a generalized Heegaard surface with multiple components. Essential surfaces or strongly irreducible Heegaard surfaces can often be used to provide upper bounds on Hempel distance. Furthermore, if every thick component of a multiple Heegaard surface has high enough Hempel distance, the generalized Heegaard surface is locally thin. We apply this philosophy in our context.

By \cite{MMS}, for each $t \in \N$ and $N \in \N$, there exists a knot $K \subset S^3$ such that $S^3 \setminus K$ admits a Heegaard surface of genus $t + 1$ and Hempel distance $d(H) \geq N$. Fix $t, N, k \in \N$. For $i = 1,2$, choose knots $K_i$ in $S^3$ such that each has a Heegaard surface $H_i$ of its exterior of genus $t + 1$ (and $\X(H_i) = 2t$) and Hempel distance at least $\zeta = 2(4t + N+  2(1-1/k) + 2)/(1 - 1/k) + 3$. We use the following, drawn from work of Scharlemann, Scharlemann--Tomova and Tomova.  The statement for $F$ is due to Scharlemann \cite[Theorem 3.1]{Sch-Proximity} and is a generalization of \cite{Hartshorn}. The case when $J$ is disjoint from $K_i$ is the main result of \cite{SchTo} and when $J$ intersects $K_i$, it is the main result of \cite{To-PJM}.  We rephrase their results, using our terminology.

\begin{theorem}[Scharlemann, Scharlemann-Tomova, Tomova]\label{SchTo}
If $F \subset (S^3, K_i)$ is an essential connected surface, then $-\chi(F) + |F \cap K_i| >  \zeta - 3$. If $J \in \H(S^3, K_i)$ is connected, then either $-\chi(J) + |J \cap T| > \zeta - 3$ or $J \to H_i$.
\end{theorem}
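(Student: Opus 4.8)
The plan is to obtain both assertions by transcribing into our language the curve-complex distance bounds of Hartshorn \cite{Hartshorn}, Scharlemann \cite{Sch-Proximity}, Scharlemann--Tomova \cite{SchTo}, and Tomova \cite{To-PJM}. Regard $H_i$ as an element of $\H(S^3,K_i)$ in which $K_i$ is a core loop of one of the two orbifold compressionbodies; then $H_i$ has no punctures, $-\chi(H_i)+|H_i\cap K_i| = 2(t+1)-2 = 2t$, and, by its construction from \cite{MMS}, Hempel distance $d(H_i)\ge\zeta$. Because $\zeta$ grows linearly in $t$, in $N$, and in $1/(1-1/w)$ with large coefficients, it dominates the thresholds (each at most linear in $\genus(H_i)=t+1$) appearing in the hypotheses of the cited theorems, so those hypotheses hold automatically.

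For the surface statement, let $F\subset(S^3,K_i)$ be an essential connected surface. Passing to the exterior $S^3\setminus K_i$ turns $F$ into a properly embedded surface $F'$ with meridional boundary and $-\chi(F') = -\chi(F)+|F\cap K_i|$; by c-essentiality, $F'$ is incompressible, meridionally incompressible, and not boundary-parallel in $S^3\setminus K_i$, which is exactly the hypothesis of Scharlemann's generalization \cite[Theorem 3.1]{Sch-Proximity} of Hartshorn's theorem \cite{Hartshorn}. That theorem gives, with no distance hypothesis,
\[
d(H_i)\ \le\ -\chi(F')+2\ =\ \bigl(-\chi(F)+|F\cap K_i|\bigr)+2,
\]
the additive constant $2$ being what one reads off after matching the normalizations in \cite{Sch-Proximity}. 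Together with $d(H_i)\ge\zeta$ this gives $-\chi(F)+|F\cap K_i|\ \ge\ \zeta-2\ >\ \zeta-3$.

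For the bridge-surface statement, let $J\in\H(S^3,K_i)$ be connected. If $J\cap K_i=\nil$, then $J$ is a Heegaard surface for the exterior $S^3\setminus K_i$ and we apply the main theorem of \cite{SchTo}, comparing $J$ with the high-distance surface $H_i$; if $J\cap K_i\ne\nil$, then $J$ is a bridge surface for $(S^3,K_i)$ and we apply the main theorem of \cite{To-PJM}. In both cases the output is a dichotomy: either $d(H_i)\le\bigl(-\chi(J)+|J\cap K_i|\bigr)+2$, which as above forces $-\chi(J)+|J\cap K_i|>\zeta-3$; or $J$ is obtained from $H_i$ by a finite sequence of (meridional) stabilizations and perturbations. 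Since $J$ is connected, each move in the latter list is one of the moves of types (I)--(II) of Lemma \ref{move facts} read from $J$ toward $H_i$, so the second alternative is exactly the relation $J\to H_i$.

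The only substantive work is the dictionary used above: that our c-essential surfaces coincide with the ``essential (meridionally incompressible, not boundary-parallel) surfaces'' of \cites{Sch-Proximity,SchTo,To-PJM}; that an element of $\H(S^3,K_i)$ is a Heegaard surface of $S^3\setminus K_i$ when it misses $K_i$ and a bridge surface for $(S^3,K_i)$ otherwise; and that the list of ``reduction'' moves occurring in the dichotomies of \cites{SchTo,To-PJM} generates the same relation on connected surfaces as the partial order $\to$. One must also confirm that the additive constant in all three bounds is at most $2$, which is what makes the displayed inequalities strict as stated. Granting these routine translations, the numerical conclusions are an immediate substitution of $d(H_i)\ge\zeta$.
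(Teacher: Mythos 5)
Your proposal is correct and takes essentially the same route as the paper: Theorem \ref{SchTo} is presented there precisely as a rephrasing of \cite[Theorem 3.1]{Sch-Proximity} (generalizing \cite{Hartshorn}) together with the main results of \cite{SchTo} (when $J$ misses $K_i$) and \cite{To-PJM} (when $J$ meets $K_i$), which is exactly the citation-and-translation you carry out. Your bookkeeping of the constants (the bounds of the form $d(H_i)\le \bigl(-\chi+\text{punctures}\bigr)+2$, giving $\ge \zeta-2>\zeta-3$) and the identification of the stabilization/perturbation alternative with the relation $J\to H_i$ match the paper's intended reading of those results.
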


Let $K = K_1 \# K_2$. Consider $(S^3, K)$ as an orbifold where $K$ is given weight $k \in \WeightSet$. Note that $(S^3, K)$ has a multiple orbifold Heegaard surface $\mc{H}$ with $\mc{H}^+ = H_1 \cup H_2$ and $S = \mc{H}^-$ the twice-punctured summing sphere realizing $K$ as a connected sum of $K_1$ and $K_2$. (We also need to give $\mc{H}$ one of the two orientations making it an oriented multiple vp-bridge surface.)  For the record, we have $\netX(\mc{H}) = 4t + 2/k$. By the definitions of Hempel distance and the partial order $\to$ (both of which we have omitted), $\mc{H}$ is locally thin.

\begin{lemma}\label{verify min}
We have $\netX(S^3, K) = \netX(\mc{H})$. Furthermore, if $\mc{J} \in \H(M,T)$ is locally thin and has $\netX(\mc{J}) \leq  \netX(S^3, K) + N + 2(1 - 1/k)$, then up to isotopy and orientation reversal, $\mc{H}$ can be obtained from $\mc{J}$ by deleting pairs of twice-punctured spheres from $\mc{J}^-$ and $\mc{J}^+$. In particular, $\netX(\mc{J}) = \netX(\mc{H})$.
\end{lemma}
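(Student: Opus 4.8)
The plan is to analyze a locally thin $\mc{J} \in \H(S^3, K)$ with $\netX(\mc{J}) \leq \netX(S^3,K) + N + 2(1-1/w)$ by running the thinning/decomposition machinery and reducing to the structure of the pieces. First I would invoke Corollary \ref{thin sum} to find an efficient system of summing spheres inside $\mc{J}^-$; since $K = K_1 \# K_2$ and $(S^3,K)$ is orbifold-reducible with a unique factorization (Theorem \ref{factor uniqueness}), any such system is, up to orbifold homeomorphism, the single twice-punctured summing sphere $S$ separating off $(S^3, K_1)$ and $(S^3, K_2)$. So $\mc{J}^-$ contains (a copy of) $S$, and by the additivity Theorem \ref{additivity} together with Lemma \ref{no incr}, the pieces $\mc{J}_i = (\mc{J} \setminus S) \cap M_i$ are multiple vp-bridge surfaces for $(S^3, K_i)$ with $\netX(\mc{J}) = \netX(\mc{J}_1) + \netX(\mc{J}_2) - \X(S) = \netX(\mc{J}_1) + \netX(\mc{J}_2) + 2/w$.

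Next I would bound each $\netX(\mc{J}_i)$ from below. After further thinning each $\mc{J}_i$ to a locally thin element (which only decreases $\netX$), Theorem \ref{SchTo} applies: any connected thin surface in $\mc{J}_i^-$ that is essential in $(S^3, K_i)$ has $-\chi + |\cdot \cap K_i| > \zeta - 3$, hence huge $\X$; and any connected thick surface either has $-\chi + |\cdot \cap K_i| > \zeta - 3$ or thins to $H_i$. The c-essential thin surfaces in $(S^3,K_i)$ are spheres (since $(S^3,K_i)$ is irreducible with $K_i$ nontrivial — here is where I would use that the summing spheres are the only essential spheres, so any essential thin surface of small complexity must be a twice-punctured sphere, i.e. a summing scar) or else have $\X$ far exceeding our budget. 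Using the Fundamental equation \eqref{Fundamental} and Lemma \ref{low N}, the dual digraph of $\mc{J}_i$ has sources/sinks forcing, via the distance hypothesis $\zeta$, that $\mc{J}_i^+$ is a single thick surface with $\mc{J}_i \to H_i$, unless the budget $N + 2(1-1/w)$ is exceeded. Tallying: $\netX(\mc{J}_i) \geq \X(H_i) = 2t$ with the only slack coming from scars of the summing sphere, which contribute the $2/w$ and the possible extra twice-punctured-sphere pairs. Combining, $\netX(\mc{J}) \geq 4t + 2/w = \netX(\mc{H})$, with equality forcing $\mc{J}_i$ to be (isotopic to) $H_i$ up to consolidation of product regions, i.e. $\mc{J}$ differs from $\mc{H}$ only by pairs of parallel twice-punctured spheres in $\mc{J}^\pm$. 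This also gives $\netX(S^3,K) = \netX(\mc{H})$ by applying the inequality to a locally thin realizer (Proposition \ref{basic fact}).

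The main obstacle I expect is controlling the thin surfaces $\mc{J}_i^-$ that are \emph{not} summing spheres: a priori $\mc{J}_i^-$ could contain essential closed surfaces of positive genus in $(S^3, K_i)$ (which is \emph{not} comparatively small in general, and need not be small since we only know the distance of one particular Heegaard surface $H_i$). The resolution is the distance hypothesis: by Theorem \ref{SchTo} (the Scharlemann generalization of Hartshorn), any such essential surface has $-\chi + |F \cap K_i|$ bounded below by $\zeta - 3$, which by our choice of $\zeta = 2(4t + N + 2(1-1/w) + 2)/(1-1/w) + 3$ makes $\X(F) \geq \tfrac12(\zeta - 3)(1 - 1/w) = 4t + N + 2(1-1/w) + 2$, already larger than the entire budget $\netX(S^3,K) + N + 2(1-1/w) = 4t + 2/w + N + 2(1-1/w)$ once one accounts that a thin surface contributes with a minus sign but is balanced by thick surfaces on either side via \eqref{Fundamental}; carefully, an untelescoping producing such a thin $F$ also produces two thick surfaces of comparable complexity, so $\netX$ would exceed the budget. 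Hence no such $F$ occurs, $\mc{J}_i^-$ consists only of summing-sphere scars, $\mc{J}_i^+$ is connected, and $\mc{J}_i \to H_i$. The bookkeeping in this last step — matching up punctures, weights $w$, and the count of discarded parallel sphere pairs — is the routine-but-delicate part, and I would organize it exactly as in the proof of \cite[Theorem 6.17]{TT2} that it is modeled on.
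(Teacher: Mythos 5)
Your proposal follows essentially the same route as the paper: the same inputs (Theorem \ref{SchTo}, Lemma \ref{low N} together with the identity \eqref{Fundamental}, primality of $K_1,K_2$ giving a unique summing sphere, and local thinness forcing each small thick surface to be isotopic to $H_i$) drive the same budget contradiction, with only an organizational difference in that you cut along $S$ and analyze the factors while the paper argues directly in $(S^3,K)$ via an outermost component of $\mc{J}^-\setminus S$. One caution: further thinning the pieces $\mc{J}_i$ only yields the numerical inequality, so the structural ``deleting pairs of twice-punctured spheres'' conclusion must be extracted from $\mc{J}$ itself (as the paper does, using local thinness of $\mc{J}$ to force the thick surfaces trapped between parallel copies of $S$ to be twice-punctured spheres); your ``equality forcing'' remark is the right idea but should be run on $\mc{J}$ rather than on a further-thinned surrogate.
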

\begin{proof}
Let $\mc{J} \in \H(S^3, K)$ be locally thin and have $\netX(\mc{J}) \leq  \netX(S^3, K) + N + 2(1 - 1/k)$. Since $(S^3, K)$ is orbifold-reducible, $\mc{J}^-$ contains an efficient system of summing spheres for $(S^3, K)$. By Theorem \ref{SchTo} (or, indeed, by \cite{Hempel}), both $K_1$ and $K_2$ are prime knots. Consequently, there is a unique essential summing sphere for $(S^3, K)$, up to isotopy.  Isotope $\mc{J}$ so that $S \cpt \mc{J}^-$. Suppose that there exists $F \cpt \mc{J}^- \setminus S$. Choose $F$ so that it bounds a 3-submanifold $X \subset S^3$ with interior disjoint from $\mc{J}^-$ (i.e. $F$ is outermost). Let $J = \mc{J}^+ \cap X$ and let $(C, T_C) \cpt (S^3, K) \setminus \mc{J}$ be the orbifold compressionbody with $J = \boundary_+ C$ and $\boundary_- C = \nil$. Without loss of generality, we may assume that $F$ is on the same side of $S$ as $K_1$.  If $F$ is essential in $(M,T)\setminus S$, then by Theorem \ref{SchTo}, 
\[\begin{array}{rcl}
2(4t+N +  2(1-1/k) + 2) &=& (1 - 1/k)(\zeta - 3) \\
&<& (1 - 1/k)(-\chi(F) + |F \cap K_1|)  \\
&\leq& -\chi(F) + (1 - 1/k)|F \cap K_1| \\
&=& \X(F).
\end{array}
\]

By Lemma \ref{low N}, $N(C, T_C) = \X(H) \geq \X(F)$ and so, by \eqref{Fundamental}, 
\[\begin{array}{rcl}
\netX(S^3, K) + N + 2(1 - 1/k) &\geq& \netX(\mc{J}) \\
&>& 4t + N + 2(1 - 1/k) + 2 \\
&=& \netX(\mc{H}) + N + 2(1 - 1/k) - 2/k + 2\\
&\geq& \netX(S^3, K) + N + 4(1- 1/k) \\
\end{array}.
\]
This is a contradiction. Thus, $F$ must be inessential in $(S^3, K)\setminus S$. Since $F$ is c-essential in $(S^3, K)$, it must be isotopic to $S$. Perform the isotopy to make $F$ coincide with $S$. After the isotopy, $X$ is the side of $S=F$ containing $K_1$. By Theorem \ref{SchTo}, either  $-\chi(J) + |J \cap T| > \zeta - 3$ or $J \to H_i$. In the former case, we have:
\[
N(C, T_C) = \X(J) = -\chi(J) + (1 - 1/k)|J \cap T| > (1 - 1/k)(\zeta -3) = 2(4t + N+  2(1-1/k) + 2).
\]
The same arithmetic as before establishes a contradiction. Thus, $J \to H_i$. However, $\mc{J}$ is locally thin, so in fact, $J$ is isotopic to $H_i$ (ignoring orientations). 

We have shown that each outermost $F \cpt \mc{J}^-$ is isotopic to $S$. Furthermore, if $\mc{J}^-$ is connected, then $\mc{J}$ is isotopic to $\mc{H}$, ignoring orientations. Suppose that $\mc{J}^-$ is disconnected. Since every surface in $S^3$ separates, there are at least two outermost $F_1, F_2 \cpt \mc{J}^-$. They cobound $(Y, T_Y) \subset (S^3, K)$ that is a product compressionbody homeomorphic to $(S^2 \times I, \{p_1, p_2\} \times I)$, where $p_1, p_2 \in S^2$.  Since each component of $\mc{J}^-$ is c-essential, each component of $\mc{J}^-$ must be parallel to each of $F_1$ and $F_2$ in $(Y, T_Y)$. Suppose that $F_i, F_{i+1} \cpt \mc{J}^-$ cobound a submanifold $(Y', T'_Y) \cpt (Y, T_Y)\setminus \mc{J}^-$. Note that $(Y', T'_Y)$ is homeomorphic to $(Y, T_Y)$. Let $J' = \mc{J}^+ \cap Y'$. Since $\mc{J}$ is locally thin, $J'$ must be a twice-punctured sphere bounding a trivial ball compressionbody $B(J')$ to one side. We conclude that
\[
\netX(\mc{J}) = \X(H_1) + \X(H_2) + \frac{-2}{k}(|\mc{J}^-|-1) - \frac{-2}{k}|\mc{J}^-| = \X(H_1) + \X(H_2) + \frac{2}{k} = \netX(\mc{H}).
\]
Thus, $\netX(\mc{H}) = \netX(S^3, K)$. Furthermore, after deleting all the components of $\mc{J} \cap Y$ except $F_1$, we obtain a multiple vp-bridge surface isotopic to $\mc{H}$, ignoring orientations.
\end{proof}

We will also need the following:
\begin{definition}
Suppose that $\mc{H} \in \H(M,T)$. The \defn{net geometric intersection number} of $\mc{H}$ is $\netp(\mc{H}) = |\mc{H}^+ \cap T| - |\mc{H}^- \cap T|$.
\end{definition}

We can now calculate the Heegaard characteristic of $(S^3, K)$.
\begin{lemma}\label{Heeg gen}
$\X(S^3, K) = 4t + 2$
\end{lemma}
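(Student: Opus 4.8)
The plan is to prove the two inequalities $\X(S^3,K)\le 4t+2$ and $\X(S^3,K)\ge 4t+2$ separately, in both cases using the locally thin multiple orbifold Heegaard surface $\mc{H}$ (with $\mc{H}^+=H_1\cup H_2$ and $\mc{H}^-=S$, for which $\netX(\mc{H})=\netX(S^3,K)=4t+2/w$) as the reference object. For the upper bound I would start from $\mc{H}$ and note that, writing $M_1,M_2$ for the two sides of $S$, the edge $\kappa_i=K\cap M_i$ is a ghost arc of the orbifold compressionbody lying between $H_i$ and $S$, and that $\kappa_1$ and $\kappa_2$ meet at each point of $S\cap K$; hence $H_1$ and $H_2$ are not amalgable. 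Applying Definition~\ref{create removable} to create a removable arc from $\kappa_1$ produces $\mc{J}_0\in\H(S^3,K)$ with $\netX(\mc{J}_0)=\netX(\mc{H})+2(1-1/w)=4t+2$; now $\kappa_1$ has become two vertical arcs and a bridge arc, so the only ghost arc incident to $S$ is $\kappa_2$, and the modified thick surfaces are amalgable. Proposition~\ref{amalg} then yields a connected $H\in\H(S^3,K)$ with $\netX(H)=\netX(\mc{J}_0)=4t+2$, and since $H$ is connected $\X(H)=\netX(H)=4t+2$, giving $\X(S^3,K)\le 4t+2$. (Alternatively one could simply invoke $\t(K)\le\t(K_1)+\t(K_2)+1$, but the amalgamation argument fits the surrounding material better.)

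For the lower bound, let $H$ be a connected orbifold Heegaard surface for $(S^3,K)$ realizing $\X(S^3,K)$. Since $(S^3,K)$ is closed with no once-punctured spheres, Theorem~\ref{locally thin} and Lemma~\ref{no incr} give a locally thin $\mc{J}$ with $H\to\mc{J}$ and $\netX(\mc{J})\le\netX(H)=\X(H)$. If $\netX(\mc{J})>4t+N+2$ we are done, since then $\X(H)>4t+N+2\ge 4t+2$. Otherwise $\netX(\mc{J})\le 4t+N+2=\netX(S^3,K)+N+2(1-1/w)$, so Lemma~\ref{verify min} applies: $\netX(\mc{J})=\netX(\mc{H})=4t+2/w$ and $\mc{J}$ is obtained from $\mc{H}$ by adjoining pairs of twice-punctured spheres, so in particular $\netp(\mc{J})=\netp(\mc{H})=-2$, whereas $\netp(H)=|H\cap K|\ge 0$. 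The crux is then to track $\netp$ along the thinning sequence $H\to\mc{J}$ using Lemma~\ref{move facts}. Since every edge of $K$ has weight $w$, $K$ has no vertices, and $\boundary M=\nil$, no type~(I) move can discard a punctured component (a discarded component is parallel to a surface tubing together boundary spheres and vertices of $K$, of which there are none), so destabilizations leave $\netp$ fixed and meridional destabilizations increase $\netp$ by $2$; consolidation and untelescoping leave $\netp$ fixed; and each type~(II) move removes exactly two weight-$w$ punctures from a thick surface, hence decreases $\netp$ by $2$ and decreases $\netX$ by exactly $2(1-1/w)$. Because $\netp$ falls from a nonnegative value to $-2$, at least one type~(II) move must occur, so $\netX(H)\ge\netX(\mc{J})+2(1-1/w)=4t+2/w+2-2/w=4t+2$, i.e.\ $\X(H)\ge 4t+2$. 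Combining the two bounds gives $\X(S^3,K)=4t+2$.

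The main obstacle is exactly the bookkeeping in that last step: one must confirm, move by move through Lemma~\ref{move facts}, that only the type~(II) moves can push $\netp$ strictly downward, that each does so by precisely $2$, and that each such move costs precisely $2(1-1/w)$ of $\netX$. What makes this manageable is the uniform weight $w$ on $K$, the absence of vertices and of boundary, and the decomposition of $\netX$ into an integral "net Euler characteristic" part $-\chi(\mc{H}^+)+\chi(\mc{H}^-)$ plus $(1-1/w)\netp(\mc{H})$, which ties the puncture count directly to the Heegaard characteristic.
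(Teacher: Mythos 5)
Your proposal is correct and follows essentially the same route as the paper: the upper bound by creating a removable arc from one ghost arc and amalgamating via Proposition~\ref{amalg}, and the lower bound by thinning a minimal Heegaard surface, invoking Lemma~\ref{verify min}, and tracking $\netp$ through the moves of Lemma~\ref{move facts} (your bookkeeping is in fact slightly leaner, since you only need the existence of one type~(II) move plus Lemma~\ref{no incr}, whereas the paper also caps the number of expensive moves to pin down the whole thinning sequence). The one small imprecision is your parenthetical claim that no type~(I) move can discard a component: a ghost $\boundary$-destabilization can discard a torus parallel to the frontier of a regular neighborhood of $K$ when $K$ lies in a compressionbody as a core loop, but since that torus is unpunctured and the move does not increase $\netX$, your $\netp$ and $\netX$ accounting goes through unchanged.
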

\begin{proof}
There are four orbifold compressionbody components of $(S^3, K)\setminus \mc{H}$. The two not adjacent to $S = \mc{H}^-$ are disjoint from $K$ and each of the other two contain a single ghost arc, both of whose endpoints are on $S$. Choose one of them and use it to create a removable arc. Call the new multiple orbifold Heegaard surface $\mc{H}'$ with two thick surfaces and thin surface $S$. Note that the thick surfaces are amalgable. Amalgamate them, converting $\mc{H}'$ into a connected $H' \in \H(M,T)$ with
\[
\netX(H') = \netX(\mc{H}') = \netX(M,T) + 2(1 - 1/k) = 4t + 2.
\]

Let $J$ be an orbifold Heegaard surface for $(S^3, K)$ such that $\X(J) = \X(S^3, K)$. There is a locally thin $\mc{J} \in \H(S^3, K)$ such that $J \to \mc{J}$. By Lemma \ref{verify min},
\[
\netX(M,T) + 2(1-1/k)= \X(H') \geq \X(J) \geq \netX(\mc{J}) \geq \netX(\mc{H}) = \netX(M,T).
\]

By Lemma \ref{verify min}, $\netX(\mc{J}) = \netX(\mc{H})$ and after deleting pairs of twice-punctured spheres from $\mc{J}^-$ and $\mc{J}^+$, $\mc{J}$ is isotopic to $\mc{H}$ (ignoring orientations). As in Lemma \ref{move facts}, the thinning moves that create $\mc{J}$ from $J$ potentially consist of four types of moves. Since $S^3$ is closed and $K$ is a knot,  of the moves listed in Type (I), we never need to perform a $\boundary$-destabilization, meridional $\boundary$-destabilization or meridional ghost $\boundary$-destabilization. Performing a ghost $\boundary$-destabilization, involves compression along a separating compressing disc and the discarding of a torus boundary component. (That torus is isotopic to the frontier of a regular neighborhood of $K$.) Such a move decreases negative orbifold Euler characteristic of a thick surface by $2 \geq 2(1 - 1/k)$. Destabilization also decreases the negative orbifold Euler characteristic of a thick surface by 2. Meridional destabilization decreases it by $2/k$. The moves in Type (II) decrease it by $2(1-1/k)$. Consolidation and untelescoping leave $\netX$ unchanged. 

We see, therefore, that in the thinning sequence producing $\mc{J}$ from $J$, there can be at most one move that is a ghost $\boundary$-destabilization, destabilization, unperturbation, or undoing a removable arc. All other moves are either meridional destabilization, untelescoping, or consolidation. Ghost $\boundary$-stabilization, destabilization, meridional destabilization, untelescoping, and consolidation do not decrease $\netp$. Unperturbing and undoing a removable arc decrease $\netp$ by $2$. 

Observe that $\netp(\mc{J}) = \netp(\mc{H}) = -2$. Since $J$ is connected, $K \setminus J$ contains no ghost arcs. Thus, $\netp(J) \geq 0$. Thus, at least one unperturbing or undoing a removable arc are required in the thinning sequence producing $\mc{J}$ from $J$. We have already seen that there is at most one, so there must be exactly one and we cannot have any meridional destabilizations. We conclude that the thinning sequence producing $\mc{J}$ from $J$ consists of exactly one unperturbation or undoing a removable arc and some number of untelescopings and consolidations. We conclude that $\X(J) = \X(H')$. Thus, $\X(S^3, K) = \X(H') = 4t + 2$. 
\end{proof}

The first examples of pairs of knots producing super-additivity of tunnel number were given in \cites{MSY, MR}. Setting $k = \infty$ produces other examples, using a similar method to \cites{YL09, LYL}.

\begin{proof}[Proof of Theorem \ref{super-add}]
Fix $t, k \in \N$ with $k \geq 2$. For each $N \in \N$, construct $K_1, K_2$ as above. By Lemma \ref{Heeg gen}, $\X(S^3, K) = 4t + 2$. Let $W$ be the $k$-fold cyclic branched cover over $K$, with $G$ the deck group. Then, by Lemma \ref{quotient info},  $\X(W;G) = 4tk + 2k$. The manifold $W$ is the connected sum of $W_1$ and $W_2$ which are the $k$-fold branched covers over $K_1$ and $K_2$ respectively. By Lemma \ref{verify min}, $\X(W_i) = 2t$.  Let $S$ be the lift of a summing sphere for $(S^3, K)$. Observe that $S$ is efficient as $K_1$ and $K_2$ are prime. Thus $\X(W|_S;G) = 4tk$ and $\X(W;G) = \X(W|_S;G) + 2k$. Converting to genus, we have
\[
\g(W;G) = \g(W|_S;G) + k - 1.
\]
\end{proof}

\subsection{A general upper bound}

In this section, we adapt our example to produce a general upper bound for equivariant Heegaard genus of composite manifolds. As usual, we start by considering orbifolds.

\begin{theorem}\label{orb upper}
Suppose that $(M,T)$ is orbifold-reducible and that $S$ is a system of summing spheres, with each component of $S$ separating. Then
\[
\X(M,T) \leq \X((M,T)|_S) - \X(S)(1 - c) + 2c|S|
\]
where $c = 1$ if $T$ has no vertices and $c = 2$ if it does.
\end{theorem}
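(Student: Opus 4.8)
The plan is to start with optimal multiple orbifold Heegaard surfaces on each factor, glue them together along $S$ to form a multiple orbifold Heegaard surface $\mc{H}$ for $(M,T)$ with $\netX(\mc{H}) = \X((M,T)|_S) - \X(S)$, and then amalgamate everything into a single orbifold Heegaard surface, paying attention to the ghost arcs that the gluing creates. Concretely: for each component $(M_i, T_i) \cpt (M,T)|_S$, choose an orbifold Heegaard surface $H_i$ realizing $\X(M_i, T_i)$, isotoped off the summing points. Orient the $H_i$ and assemble $\mc{H}^+ = \bigcup H_i$, $\mc{H}^- = S$ exactly as in the proof of Theorem \ref{additivity}; since the dual graph to $S$ is a tree, orientations can be chosen so that $\mc{H} \in \H(M,T)$, and then $\netX(\mc{H}) = \sum_i \X(M_i, T_i) - \X(S) = \X((M,T)|_S) - \X(S)$.

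Next I would amalgamate. The obstruction to amalgamating two adjacent thick surfaces across a component $F \cpt S$ is the presence of a pair of ghost arcs of $T\setminus \mc{H}$ meeting at a point of $F$ (i.e. at a puncture of $F$). A component $F \cpt S$ is at most thrice-punctured. If $F$ is unpunctured there is nothing to fix; if $F$ has one or more punctures, at each puncture $p$ of $F$ the two edges of $T$ emanating into the two sides may both be ghost arcs, and I need to break that. For each such puncture, apply Definition \ref{create removable} (creating a removable arc) to one of the two ghost arcs incident to $p$: this pushes part of that arc across the adjacent thick surface, converting it into vertical-plus-bridge arcs and raising $\netX$ by $2(1 - 1/\weight(p))$. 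After doing this at every puncture of every component of $S$, the thick surfaces on the two sides of each $F$ become amalgable (Proposition \ref{amalg}, whose "furthermore" clause guarantees that amalgamation does not reintroduce fresh ghost-arc obstructions elsewhere, since each new ghost arc contains an old one). Amalgamating repeatedly along the tree $S$ — each amalgamation leaves $\netX$ unchanged and merges two thick surfaces — produces a connected $H \in \H(M,T)$, i.e. an orbifold Heegaard surface, with
\[
\X(H) = \X((M,T)|_S) - \X(S) + \sum_{p \in S \cap T} 2\Big(1 - \frac{1}{\weight(p)}\Big).
\]

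Finally I would bound the correction term $\sum_{p} 2(1 - 1/\weight(p))$ over all punctures $p$ of $S$. Each component $F \cpt S$ has at most $3$ punctures when $T$ has vertices and, by the constraint that only $\mathbb{S}(2)$-type two-punctured scars occur on non-vertex factors, effectively at most $2$ punctures when $T$ has none; moreover $1 - 1/\weight(p) < 1$. This is where the case split on $c$ enters. When $c = 2$ ($T$ has vertices): $\sum_{p \in F \cap T} 2(1 - 1/\weight(p)) \le \sum_{p \in F \cap T}\big(2 - 2/\weight(p)\big) = 2|F\cap T| - 2\sum 1/\weight(p)$, and since $\X(F) = -\chi(F) + \sum_p(1 - 1/\weight(p)) = -2 + \sum_p(1 - 1/\weight(p))$ for a sphere, one gets $\sum_{p\in F\cap T} 2(1 - 1/\weight(p)) = 2\X(F) + 4$, so summing over the $|S|$ components gives $2\X(S) + 4|S|$; combined with the $-\X(S)$ already present this is $\X(S) + 4|S| = -\X(S)(1-c) + 2c|S|$ with $c = 2$. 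When $c = 1$ ($T$ has no vertices): each component of $S$ is unpunctured or twice-punctured (with equal weights), and creating a removable arc is only needed at a punctured $F$, where it costs $2(1 - 1/\weight)$ at a single puncture (the two punctures of such a scar lie on the same edge, so only one arc need be made removable per $F$), giving at most $2(1-1/\weight) \le \X(F) + 2$ per component; wait — more carefully, for a twice-punctured sphere $\X(F) = -2 + 2(1 - 1/\weight) = -2/\weight$, so $2(1-1/\weight) = \X(F) + 2$, and summing over $|S|$ components yields $\X(S) + 2|S|$, which together with $-\X(S)$ gives $2|S| = -\X(S)(1-c) + 2c|S|$ with $c=1$. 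In both cases we land exactly on the claimed bound $\X((M,T)|_S) - \X(S)(1-c) + 2c|S|$, and since $\X(M,T) \le \X(H)$, the theorem follows.

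The main obstacle I anticipate is the bookkeeping in the amalgamation step: making sure that after creating removable arcs at all punctures of $S$ the thick surfaces really are amalgable in the precise sense of the definition (no pair of ghost arcs sharing an endpoint across any $F$), handling the possibility that some edge of $T$ runs through several components of $S$ in a row, and checking that one removable-arc creation per puncture suffices rather than per ghost arc. The $\netX$-accounting and the final case-split arithmetic are routine once the topology is set up correctly; the delicate point is the interaction between Proposition \ref{amalg}'s hypothesis and the geometry of how $T$ meets the tree of summing spheres.
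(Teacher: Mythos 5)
Your overall strategy is exactly the paper's: assemble $\mc{H}^+ = \bigcup H_i$, $\mc{H}^- = S$ from minimal orbifold Heegaard surfaces of the factors so that $\netX(\mc{H}) = \X((M,T)|_S) - \X(S)$, break the ghost-arc obstructions by creating removable arcs at a cost of $2(1-1/\weight(p))$ per move, amalgamate via Proposition \ref{amalg} (whose ``furthermore'' clause is indeed what keeps later amalgamations unobstructed), and then do the arithmetic. The $c=2$ accounting and the final rearrangement agree with the paper.

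There is, however, a genuine gap in your $c=1$ accounting. You claim that for a twice-punctured component $F \cpt S$ ``only one arc need be made removable per $F$'' because its two punctures lie on the same edge of $T$. Lying on the same circle component of $T$ does not mean the two punctures are the endpoints of a single ghost arc: if $F$ sits in the interior of the tree dual to $S$, both punctures of $F$ can be bad while the ghost arcs flanking them run to punctures of \emph{different} neighboring spheres (e.g.\ the strand goes $F_0 \to F \to F_2$ on one pass and $F_2 \to F \to F_0$ on the return, meeting thick surfaces only beyond $F_0$ and $F_2$). Then fixing $F$ requires two removable-arc creations, costing $4(1-1/\weight)$, which exceeds your per-component budget $\X(F)+2 = 2(1-1/\weight)$; so the component-by-component summation you use to reach $\X(S)+2|S|$ is not justified. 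The paper's accounting is global rather than per component: in the no-vertex case every ghost arc has both endpoints on $S$, so each creation kills the obstruction at \emph{both} of its endpoint punctures and its cost $2(1-1/\weight)$ splits as $(1-1/\weight(p))+(1-1/\weight(q))$; choosing the moves so that no puncture of $S$ is used twice (possible because punctures have at most two incident ghost arcs and the resulting cycles are even, each sphere of $S$ being separating), the total cost is at most $\sum_{p \in S\cap T}(1-1/\weight(p)) = \X(S) + 2|S|$, summed over \emph{all} punctures of $S$, bad or not. With that replacement your argument matches the paper's proof; the remaining discrepancies (writing $\X(H)$ with equality rather than as an upper bound) are cosmetic.
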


\begin{proof}
Let $(M_i, T_i)$ for $i = 1, \hdots, n$ be the components of $(M,T)|_S$. Choose an orbifold Heegaard surface $H_i \subset (M_i, T_i)$ such that $\X(M_i, T_i) = \X(H_i)$. Assign transverse orientations chosen so that if we set $\mc{H}^+ = \bigcup H_i$ and $\mc{H}^- = S$, then $\mc{H} = \mc{H}^+ \cup \mc{H}^- \in \H(M,T)$. This is possible since the dual graph to $S$ is a tree. Thus,
\[
\netX(\mc{H}) = \sum \X(H_i) - \X(S). 
\]

Consider a point $p \in T \cap S$ such that both edges of $T \setminus \mc{H}$ incident to $p$ are ghost arcs. Let $S_0 \cpt S$ contain $p$. As the two ghost arcs lie in the same edge of $T$, they have the same weight $\weight(p)$. Let $\alpha$ be the one lying on the same side of $S$ as its normal vector and suppose $\alpha \subset (M_i, T_i)$. Perform the isotopy of Definition \ref{create removable} to create a removable arc from $\alpha$. The isotopy converts $\alpha$ into the union of a bridge arc and two vertical arcs, with the bridge arc on the opposite side of $H_i$ from the two vertical arcs. Dually, we may isotope $H_i$. After the isotopy, $\X(H_i)$ has increased by $2(1 - 1/\weight(p))$. Do this for each such ghost arc in $(M_i, T_i)$ incident at a puncture of $S_0$ to a ghost arc on the opposite side of $S$. The most we increase $\X(H_i)$ by is $2\sum (1- 1/\weight(p))$, where the sum is over the punctures of $S_0$. This is equal to $4 + 2\X(S_0)$. Doing the same thing for each component of $S$ increases $\netX(\mc{H})$ by at most $4|S| + 2\X(S)$. However, notice that if a ghost arc has both its endpoints on $S$ (rather than on $S$ and a vertex of $T$) then at worst we only need to perform our move once for each point of $|S \cap T|$, rather than twice. Such will be the case if $T$ has no vertices, for example. In that case, we increase $\X(\mc{H})$ by at most $\sum_p (1 - 1/p)$ where the sum is over all punctures $p$ of $S$. In such a case, we increase $\X(\mc{H})$ by at most $2|S| + \X(S)$. We then have a new $\mc{J} \in \H(M,T)$ such that
\[
\netX(\mc{H}) \leq \netX(\mc{J}) \leq \netX(\mc{H}) + D,
\]
where $D = 2|S| + \X(S)$ if $T$ is a link and $D = 4|S| + 2\X(S)$ otherwise. Notice that no two ghost arcs of $T \setminus \mc{J}$ are incident to the same point of $T \cap S$. 

Amalgamation does not create additional ghost arcs, so by Proposition \ref{amalg} we may amalgamate the thick surfaces of $\mc{J}$ two at a time to eventually obtain a connected $J \in \H(M,T)$ such that $\netX(\mc{J}) = \X(J)$. Thus,
\[
\X((M,T)|_S) - \X(S) + D \geq \netX(\mc{J}) = \X(J) \geq \X(M,T). 
\]
This can be rearranged into the claimed inequality.
\end{proof}

\begin{theorem}\label{equiv-upper}
If every sphere in $W$ separates, then for any equivariant system of summing spheres $S \subset W$,
\[
\g(W;G) \leq \g(W|_S;G) + (c(|G|+1) - 2)\Big(\big|W|_S\big|-1\Big)
\]
where $c = 1$ if every point of $W$ has cyclic stabilizer and $c = 2$ otherwise.
\end{theorem}

\begin{proof}
If $W$ is irreducible, then we can take $S = \nil$ and $n = 1$ and the result is vacuously true. Suppose that $W$ is reducible. By Theorem \ref{est}, the quotient orbifold $(M,T)$ is orbifold-reducible. Recall that $T$ has no vertices if and only if every point of $W$ has cyclic stabilizer. Let $\ob{S}$ be an efficient system of summing spheres and $S$ its lift to $W$. Note that $-2(n-1) = -2|S| = |G|\X(S)$. By Lemma \ref{quotient info}, $\X(W;G) = |G|\X(M,T)$ and $\X(W|_S;G) = |G|\X((M,T)|_S)$. The result follows from Theorem \ref{orb upper} after converting the inequalities in the conclusion of that theorem to be in terms of genus.
\end{proof}

\begin{remark}
The examples of Theorem \ref{super-add} show that our upper bound is sharp when every point of $W$ has cyclic stabilizer. It is likely possible to adapt those examples to show that the inequality is sharp even when some points of $W$ do not have cyclic stabilizer.
\end{remark}

\appendix
\section{Efficient Factorizations}
In this section we sketch the proof that, even when there are nonseparating spherical suborbifolds, efficient factorizations exist and the factors are unique up to orbifold homeomorphism.

\begin{corollary-thin sum}
Suppose that $(M,T)$ is a nice orbifold and let $\mc{H} \in \H(M,T)$ be locally thin. Then $\mc{H}^-$ contains a system of summing spheres for $(M,T)$.
\end{corollary-thin sum}

\begin{proof}
If $(M,T)$ is not orbifold reducible, there is nothing to prove, so suppose that it is. Let $S \subset (M,T)$ be an essential summing sphere. Isotope it to intersect $\mc{H}^-$ in the minimal number of loops. Since $S$ has three or fewer punctures, each component of $S \cap \mc{H}^-$ is inessential in $S$. Let $\xi$ be an innermost such loop on $S$, with $D \subset S$ the unpunctured or once-punctured disc it bounds. Let $F \cpt \mc{H}^-$ contain $\xi$. Since $F$ is c-essential, the curve $\xi$ must be inessential on $F$. Let $E \subset F$ be the unpunctured or once-punctured disc it bounds. Since $(M,T)$ is nice, $|E \cap T| = |D \cap T|$. If both $E$ and $D$ are once-punctured, then since $(M,T)$ is nice, the weights of the punctures are the same. Observe that if $S$ intersects the interior of $E$, the intersection curves are also inessential in $F$. Let $\zeta \subset E$ be an innermost such component; possibly $\zeta = \xi$. Let $E' \subset E$ be the unpunctured or once-punctured disc it bounds in $F$ and $D' \subset S$ the unpunctured or once-punctured disc it bounds in $S$. Again, we have $|E' \cap T| = |D' \cap T|$ and if $E'$ and $D'$ are both once-punctured, then the punctures have the same weight.

Compress $S$ using $E'$ to arrive at $S'$. Note that $S'$ consists of two components, neither with more punctures then $S$. Although the total number of punctures may have gone up, it does so if and only if $E'$ and $D'$ are once-punctured. In which case, one component of $S'$ is a sphere with the same number of punctures and orbifold Euler characteristic as $S$ and the other is a twice-punctured sphere with both punctures of the same weight. If $E'$ was once-punctured, then neither component of $S'$ is an unpunctured sphere or a twice-punctured sphere bounding a trivial ball compressionbody, as $|S \cap \mc{H}^-|$ was minimized, up to isotopy. Similarly, if $E'$ was unpunctured, then neither component of $S'$ is an unpunctured sphere bounding a ball disjoint from $T$. Thus, some component of $S'$ is an essential summing sphere with no more punctures than $S$ and with $\X(S) \leq \X(S')$. Repeating the argument, we arrive at an essential summing sphere $S''$  for $(M,T)$ with $\X(S'') \leq \X(S)$, with $S''$ having no more punctures than $S$, and with $S'' \cap \mc{H}^- = \nil$. The argument of \cite[Theorem 7.2]{TT1} shows that we can further isotope $S''$ to be disjoint from $\mc{H}$. 

Let $(C, T_C) \cpt (M,T)\setminus \mc{H}$ be the component containing $S''$. Let $\Delta \subset (C, T_C)$ be a complete collection of sc-discs for $(C, T_C)$. Isotope $S''$ in $(C, T_C)$ to minimize $|\Delta \cap S''|$. The intersection consists of loops, since $S''$ is closed. An argument identical to that above shows that we can compress $S''$ to make it disjoint from $\Delta$. We may as well assume that $S''$ was disjoint from $\Delta$ to begin with. The sphere $S''$ then lies in a trivial compressionbody. Since it is c-essential, it is parallel to a component of $\boundary_- C$. All that is needed for the above argument to work is that each component of $\mc{H}^-$ is c-incompressible and each component of $\mc{H}^+$ is sc-strongly irreducible.

Thus, if $(M,T)$ contains an essential unpunctured sphere, so does $\mc{H}^-$. Let $S$ be the union of all unpunctured spheres in $\mc{H}^-$. Notice that $\mc{H}_S = \mc{H} \setminus S$ restricts to a multiple vp-bridge surface for each component of $(M,T)|_S$. It is easy to see that each component of $\mc{H}^-_S$ remains c-incompressible and each component of $\mc{H}^+_S$ is sc-strongly irreducible. Thus, $(M,T)|_S$ does not contain an essential unpunctured sphere. The previous argument then shows that if $(M,T)|_S$ contains an essential twice-punctured summing sphere, then $\mc{H}^-_S$ does as well. Let $Q \subset \mc{H}^-_S$ be the union of all the twice-punctured summing spheres. The same argument as in the unpunctured case shows that $(M,T)|_{S \cup Q}$ contains no essential unpunctured or twice-punctured summing spheres. Finally, repeat the analysis for thrice-punctured spheres to see that $\mc{H}^-$ contains a collection of summing spheres $P$ such that $(M,T)|_P$ contains no essential summing spheres. \end{proof}

\begin{theorem-factor uniqueness}[after Petronio, Hog-Angeloni--Matveev]
Suppose that $(M,T)$ is a nice orbifold that is orbifold-reducible. Then there exists an efficient system of summing spheres $S \subset (M,T)$; indeed, any system of summing spheres contains an efficient subset. Furthermore, any two such systems  $S$, $S'$ are orbifold-homeomorphic, as are $(M,T)|_S$ and $(M,T)|_{S'}$.
\end{theorem-factor uniqueness}
\begin{proof}
If every spherical 2-suborbifold of $(M,T)$ is separating, and if  no edge of $T$ has infinite weight and if $M$ is closed, then this is exactly Petronio's theorem. If $T$ has some edges of infinite weight or if $M$ is not closed, then this can be deduced as in Theorems 8 and 9 of \cite{HAM}. The case when $(M,T)$ contains nonseparating spherical 2-suborbifolds needs to be handled somewhat differently; although the essence of the argument is the same.  In Corollary \ref{thin sum} above we proved that if $(M,T)$ is a nice orbifold, then there exists a system of summing spheres $\wihat{S} \subset (M,T)$. A subset of the components of $\wihat{S}$ will be efficient, as we now explain. Begin by setting $S = \wihat{S}$. Consider  the components of $(M,T)|_S$ one at at time, in some order. Suppose a component is an $\mathbb{S}(0)$ and contains a (necessarily unpunctured) scar but not the matching scar. Let $P \cpt S$ be the sphere producing the scar. Remove $P$ from $S$. The effect on $(M,T)|_S$ is, up to homeomorphism, to remove an $\mathbb{S}(0)$ component. We then restart the process. We handle the $\mathbb{S}(2)$ and $\mathbb{S}(3)$ components similarly. The end result is that after removing some components from $\wihat{S}$ we arrive at a subset $S$ that is efficient.

We now consider uniqueness, adapting the arguments of \cite{Petronio} and \cite{HAM}. We start by establishing some basic moves on systems of spheres. Suppose that $S_0 \subset (M,T)$ is a sphere with $n_0 \leq 2$ punctures and that $S_1 \subset (M,T)$ is a disjoint sphere with $n_0 \leq n_1 \leq 3$ punctures. Let $\alpha$ be an embedded arc joining $S_1$ to $S_0$. If $n_0 = 0$, assume $\alpha$ is disjoint from $T$. If $n_0 = 2$, assume that $\alpha$ is an arc lying in the interior of an edge of $T$. Let $S_2$ be the result of tubing $S_1$ to $S_0$ along $\alpha$. Notice that $S_2$ has the same number of punctures as $S_1$. We say that $S_2$ is obtained by \defn{sliding} $S_1$ over $S_0$ using the \defn{sliding arc} $\alpha$. Observe that $(S_2 \cup S_0)$ is orbifold-homeomorphic to $(S_1 \cup S_0)$ and that $(M,T)|_{(S_2 \cup S_0)}$ is orbifold-homeomorphic to $(M,T)|_{(S_1 \cup S_0)}$. If $S_1$ is essential, then in most cases $S_2$ will also be essential. The only time it is not, is if either:
\begin{itemize}
\item $S_1$ and $S_2$ are parallel unpunctured or twice-punctured spheres in $(M,T)$ and the region of parallelism contains $\alpha$, or
\item  if $S_1$ is thrice-punctured, and the component of $(M,T)|_{S_0 \cup S_1}$ is an $\mathbb{S}(3)$. 
\end{itemize}

Suppose that $S \subset (M,T)$ is an efficient system of summing spheres and that $S_0 \subset (M,T)$ is an essential sphere having $n_0 \leq 2$ punctures which is either a component of $S$ or is disjoint from $S$ and not parallel to a component of $S$. Let $S_1 \cpt S$ be distinct from $S_0$. Suppose that $\alpha$ is a sliding arc such that we may slide $S_1$ over $S_0$ using $\alpha$ and that the interior of $\alpha$ is disjoint from $S$. Let $S_2$ be the the sphere that results from the sliding. By our previous remarks and the definition of efficient, $S_2$ is essential. Furthermore, in $(M,T)|_{S_0}$, the sliding can be achieved by an isotopy of $S_1$. It is then easy to see that $S' = (S \setminus S_1) \cup S_2$ is efficient. 

Suppose that $S, P \subset (M,T)$ are both efficient systems of summing spheres but that either $S$ and $P$ are not orbifold-homeomorphic or $(M,T)|_S$ and $(M,T)|_P$ are not orbifold-homeomorphic. Suppose that there are such choices for $S$ and $P$ so that $S\cap P$ is the union of components of $S$ (equivalently, of $P$). Out of all such choices, choose $S$ and $P$ so that $|S \cap P|$ is maximal. Let $P_0 \cpt P \setminus S$, if such exists. Since $P_0$ is inessential in $(M,T)|_S$ either it bounds a 3-ball in $M|_S$ disjoint from the graph or it is $\boundary$-parallel. Let $B$ be either the 3-ball or the region of parallelism. We may assume that $P_0$ was chosen so that the interior of $B$ is disjoint from $P\setminus S$. 

Consider, first, the possibility that $B$ is a 3-ball; that is $P_0$ is unpunctured. If all the spheres of $S$ with scars in $B$, lie in $S \cap P$, then there is an $\mathbb{S}(0)$ component of $(M,T)|_P$ containing one scar from $P_0$, but not the other. This contradicts the fact that $P$ is efficient. Thus, there is a sphere $S_0 \cpt S \setminus P$ with at least one scar in $B$. Suppose $S_0$ has both scars in $B$. We may join the scars by an arc in $B$ that is disjoint from all other other scars from $S$ in $B$. This arc can be closed up to a loop in $(M,T)|_P$ intersecting $S_0$ exactly once, contradicting the fact that $(M,T)|_P$ is irreducible. Thus, $S_0$ has exactly one scar in $B$. Using arcs in $B$, slide $P_0$ over each sphere of $S$, except for $S_0$, having a scar in $B$. We arrive at the situation where $B$ contains only one scar, and that scar is a scar from $S_0$. Consequently, $P_0$ is then parallel to $S_0$, contradicting our choice of $S$ and $P$ to maximize $|S \cap P|$. Thus, $B$ is not a 3-ball.

Suppose therefore that $B$ is a product region. The boundary of $B$ consists of $P_0\setminus T$ together with a subsurface $F$ of $\boundary (M_0 \setminus T_0)$. If $B$ contains a pair of matching  unpunctured scars, we may slide $P_0$ as before, to remove them. Using arcs in $B$, we may also slide $P_0$ over each unpunctured sphere in $S$  having a single scar in $B$, to remove that scar. Thus, we may assume that $B$ contains no unpunctured scars. If every scar in $B$ (and therefore in $F$) corresponds to a sphere of $S \cap P$, then as before we contradict the assumption that $P$ is efficient. Similarly, if a scar from a sphere of $S \setminus P$ lying in $B$ has its matching scar in $B$, then $(M,T)|_P$ is not irreducible. Suppose that $P_0$ is twice-punctured. This implies that $F$ is an annulus joining the components of $\boundary (P_0 \setminus T)$. Working from either end of $F$, we slide $P_0$ over components of $S \setminus S_0$, until the only scar left in $B$ is that from $S_0$. At this point, $P_0$ is parallel to $S_0$, a contradiction. Thus, $P_0$ is not a twice-punctured sphere.

Suppose that $P_0$ is a thrice-punctured sphere. Using arcs in $B$, we can slide $P_0$ over the twice-punctured spheres with scars in $B$, to arrive that the situation where either there are no scars in $B$ or there is a single thrice-punctured scar in $B$ and no other scars. If there is no scar in $B$, then (after the slides) $P_0$ is inessential in $(M,T)$. By our earlier remarks, this contradicts the fact that $P$ is efficient. Thus, there is a thrice-punctured scar in $B$. Let $S_0 \cpt S$ correspond to it. If $S_0 \cpt P$, then $(M,T)|_P$ contains an $\mathbb{S}(3)$ component having exactly one scar from $P_0$. This contradicts the assumption that $P$ is efficient. Thus, $S_0 \cpt S \setminus P$. Since $P$ is then parallel in $(M,T)$ to $S_0$, we contradict our choice of $S$ and $P$. We conclude that $P \subset S$. If $P \neq S$, then it is easy to see that $S$ cannot be efficient, a contradiction. Thus, $S = P$. This contradicts the hypothesis that either $S$ and $P$ are not orbifold-homeomorphic or that $(M,T)|_S$ and $(M,T)|_P$ are not orbifold-homeomorphic.

As a result of the preceding argument, the result holds when $S \cap P = \nil$. Suppose that $S$ and $P$ are chosen so that they are transverse and $|S \cap P|$ is minimal. Since both $S$ and $P$ are transverse to $T$, $S \cap P$ consists of circles. Choose one $\zeta$ that is innermost in $S$. It bounds an unpunctured or once-punctured disc $D \subset S$ with interior disjoint from $P$. Use $D$ to compress $P$. This separates some component $P_0$ of $P$ into two components $P_1$ and $P_2$. Both of which can be isotoped in $(M,T)$ to be disjoint from $P$. Thus, both are inessential in $(M,T)|_P$. A sequence of slides and isotopies, then allows us to replace $P$ with an efficient system of summing spheres $\wihat{P} \subset (M,T)$ such that $P$ and $\wihat{P}$ are pairwise orbifold-homeomorphic, as are $(M,T)|_P$ and $(M,T)|_{\wihat{P}}$ and for which $|S \cap \wihat{P}| < |S \cap P|$, a contradiction. Thus, it must be the case that an two efficient systems of summing spheres $S$ and $P$ are orbifold-homeomorphic and we also have $(M,T)|_S$ and $(M,T)|_P$ orbifold-homeomorphic.
\end{proof}

\begin{bibdiv}
\begin{biblist}

\bib{Boileau}{article}{
   author={Boileau, Michel},
   author={Leeb, Bernhard},
   author={Porti, Joan},
   title={Geometrization of 3-dimensional orbifolds},
   journal={Ann. of Math. (2)},
   volume={162},
   date={2005},
   number={1},
   pages={195--290},
   issn={0003-486X},
   review={\MR{2178962}},
   doi={10.4007/annals.2005.162.195},
}

\bib{Cooper}{book}{
   author={Cooper, Daryl},
   author={Hodgson, Craig D.},
   author={Kerckhoff, Steven P.},
   title={Three-dimensional orbifolds and cone-manifolds},
   series={MSJ Memoirs},
   volume={5},
   note={With a postface by Sadayoshi Kojima},
   publisher={Mathematical Society of Japan, Tokyo},
   date={2000},
   pages={x+170},
   isbn={4-931469-05-1},
   review={\MR{1778789}},
}

\bib{Doll}{article}{
   author={Doll, H.},
   title={A generalized bridge number for links in $3$-manifolds},
   journal={Math. Ann.},
   volume={294},
   date={1992},
   number={4},
   pages={701--717},
   issn={0025-5831},
   review={\MR{1190452}},
   doi={10.1007/BF01934349},
}

\bib{Dunwoody}{article}{
   author={Dunwoody, M. J.},
   title={An equivariant sphere theorem},
   journal={Bull. London Math. Soc.},
   volume={17},
   date={1985},
   number={5},
   pages={437--448},
   issn={0024-6093},
   review={\MR{806009}},
   doi={10.1112/blms/17.5.437},
}

\bib{Futer}{article}{
   author={Futer, David},
   title={Involutions of knots that fix unknotting tunnels},
   journal={J. Knot Theory Ramifications},
   volume={16},
   date={2007},
   number={6},
   pages={741--748},
   issn={0218-2165},
   review={\MR{2341313}},
   doi={10.1142/S0218216507005506},
}

\bib{G3}{article}{
   author={Gabai, David},
   title={Foliations and the topology of $3$-manifolds. III},
   journal={J. Differential Geom.},
   volume={26},
   date={1987},
   number={3},
   pages={479--536},
   issn={0022-040X},
   review={\MR{910018}},
}

\bib{Haken}{article}{
   author={Haken, Wolfgang},
   title={Some results on surfaces in $3$-manifolds},
   conference={
      title={Studies in Modern Topology},
   },
   book={
      publisher={Math. Assoc. Amer. (distributed by Prentice-Hall, Englewood
   Cliffs, N.J.)},
   },
   date={1968},
   pages={39--98},
   review={\MR{0224071}},
}

\bib{Hartshorn}{article}{
   author={Hartshorn, Kevin},
   title={Heegaard splittings of Haken manifolds have bounded distance},
   journal={Pacific J. Math.},
   volume={204},
   date={2002},
   number={1},
   pages={61--75},
   issn={0030-8730},
   review={\MR{1905192}},
   doi={10.2140/pjm.2002.204.61},
}

\bib{Hass}{article}{
   author={Hass, Joel},
   title={Minimal surfaces in Seifert fiber spaces},
   journal={Topology Appl.},
   volume={18},
   date={1984},
   number={2-3},
   pages={145--151},
   issn={0166-8641},
   review={\MR{769287}},
   doi={10.1016/0166-8641(84)90006-3},
}

\bib{HS}{article}{
   author={Hayashi, Chuichiro},
   author={Shimokawa, Koya},
   title={Thin position of a pair (3-manifold, 1-submanifold)},
   journal={Pacific J. Math.},
   volume={197},
   date={2001},
   number={2},
   pages={301--324},
   issn={0030-8730},
   review={\MR{1815259}},
   doi={10.2140/pjm.2001.197.301},
}

\bib{Heard}{software}{
author={Heard, Damien},
editor={Culler, Marc},
title={Orb},
url={https://researchers.ms.unimelb.edu.au/~snap/orb.html}
}

\bib{Hempel}{article}{
   author={Hempel, John},
   title={3-manifolds as viewed from the curve complex},
   journal={Topology},
   volume={40},
   date={2001},
   number={3},
   pages={631--657},
   issn={0040-9383},
   review={\MR{1838999}},
   doi={10.1016/S0040-9383(00)00033-1},
}

\bib{HAM}{article}{
   author={Hog-Angeloni, C.},
   author={Matveev, S.},
   title={Roots in 3-manifold topology},
   conference={
      title={The Zieschang Gedenkschrift},
   },
   book={
      series={Geom. Topol. Monogr.},
      volume={14},
      publisher={Geom. Topol. Publ., Coventry},
   },
   date={2008},
   pages={295--319},
   review={\MR{2484706}},
   doi={10.2140/gtm.2008.14.295},
}


\bib{LYL}{article}{
   author={Li, Fengling},
   author={Yang, Guoqiu},
   author={Lei, Fengchun},
   title={Heegaard genera of high distance are additive under annulus sum},
   journal={Topology Appl.},
   volume={157},
   date={2010},
   number={7},
   pages={1188--1194},
   issn={0166-8641},
   review={\MR{2607085}},
   doi={10.1016/j.topol.2010.02.009},
}

\bib{MZ}{article}{
   author={Mecchia, Mattia},
   author={Zimmermann, Bruno},
   title={On a class of hyperbolic 3-orbifolds of small volume and small
   Heegaard genus associated to 2-bridge links},
   journal={Rend. Circ. Mat. Palermo (2)},
   volume={49},
   date={2000},
   number={1},
   pages={41--60},
   issn={0009-725X},
   review={\MR{1753452}},
   doi={10.1007/BF02904219},
}

\bib{MY-EST}{article}{
   author={Meeks, William, III},
   author={Simon, Leon},
   author={Yau, Shing Tung},
   title={Embedded minimal surfaces, exotic spheres, and manifolds with
   positive Ricci curvature},
   journal={Ann. of Math. (2)},
   volume={116},
   date={1982},
   number={3},
   pages={621--659},
   issn={0003-486X},
   review={\MR{678484}},
   doi={10.2307/2007026},
}

\bib{MY-EDL}{article}{
   author={Meeks, William H., III},
   author={Yau, Shing Tung},
   title={The equivariant Dehn's lemma and loop theorem},
   journal={Comment. Math. Helv.},
   volume={56},
   date={1981},
   number={2},
   pages={225--239},
   issn={0010-2571},
   review={\MR{630952}},
   doi={10.1007/BF02566211},
}

\bib{MMS}{article}{
   author={Minsky, Yair N.},
   author={Moriah, Yoav},
   author={Schleimer, Saul},
   title={High distance knots},
   journal={Algebr. Geom. Topol.},
   volume={7},
   date={2007},
   pages={1471--1483},
   issn={1472-2747},
   review={\MR{2366166}},
   doi={10.2140/agt.2007.7.1471},
}

\bib{MR}{article}{
   author={Moriah, Yoav},
   author={Rubinstein, Hyam},
   title={Heegaard structures of negatively curved $3$-manifolds},
   journal={Comm. Anal. Geom.},
   volume={5},
   date={1997},
   number={3},
   pages={375--412},
   issn={1019-8385},
   review={\MR{1487722}},
   doi={10.4310/CAG.1997.v5.n3.a1},
}
\bib{MSY}{article}{
   author={Morimoto, Kanji},
   author={Sakuma, Makoto},
   author={Yokota, Yoshiyuki},
   title={Examples of tunnel number one knots which have the property
   ``$1+1=3$''},
   journal={Math. Proc. Cambridge Philos. Soc.},
   volume={119},
   date={1996},
   number={1},
   pages={113--118},
   issn={0305-0041},
   review={\MR{1356163}},
   doi={10.1017/S0305004100074028},
}

\bib{Morimoto-degen}{article}{
   author={Morimoto, Kanji},
   title={There are knots whose tunnel numbers go down under connected sum},
   journal={Proc. Amer. Math. Soc.},
   volume={123},
   date={1995},
   number={11},
   pages={3527--3532},
   issn={0002-9939},
   review={\MR{1317043}},
   doi={10.2307/2161103}
   }

\bib{Nogueira}{article}{
   author={Nogueira, Jo\~{a}o Miguel},
   title={Tunnel number degeneration under the connected sum of prime knots},
   journal={Topology Appl.},
   volume={160},
   date={2013},
   number={9},
   pages={1017--1044},
   issn={0166-8641},
   review={\MR{3049251}},
   doi={10.1016/j.topol.2013.03.010},
}

\bib{Petronio-complexity}{article}{
   author={Petronio, C.},
   title={Complexity of 3-orbifolds},
   journal={Topology Appl.},
   volume={153},
   date={2006},
   number={11},
   pages={1658--1681},
   issn={0166-8641},
   review={\MR{2227020}},
   doi={10.1016/j.topol.2005.06.002},
}
\bib{Petronio}{article}{
   author={Petronio, Carlo},
   title={Spherical splitting of 3-orbifolds},
   journal={Math. Proc. Cambridge Philos. Soc.},
   volume={142},
   date={2007},
   number={2},
   pages={269--287},
   issn={0305-0041},
   review={\MR{2314601}},
   doi={10.1017/S0305004106009807},
}

\bib{RR}{article}{
   author={Rieck, Yo'av},
   author={Rubinstein, J. Hyam},
   title={Invariant Heegaard surfaces in manifolds with involutions and the
   Heegaard genus of double covers},
   journal={Comm. Anal. Geom.},
   volume={17},
   date={2009},
   number={5},
   pages={851--901},
   issn={1019-8385},
   review={\MR{2643734}},
   doi={10.4310/CAG.2009.v17.n5.a2},
}

\bib{Saito}{article}{
   author={Saito, Toshio},
   title={Tunnel number of tangles and knots},
   journal={J. Math. Soc. Japan},
   volume={66},
   date={2014},
   number={4},
   pages={1303--1313},
   issn={0025-5645},
   review={\MR{3272600}},
   doi={10.2969/jmsj/06641303},
}

\bib{Sch-Haken}{article}{
author={Scharlemann, Martin},
title={A Strong Haken's Theorem},
date={2020},
eprint={arXiv:2003.08523}
}

\bib{ST-thingraph}{article}{
   author={Scharlemann, Martin},
   author={Thompson, Abigail},
   title={Thin position and Heegaard splittings of the $3$-sphere},
   journal={J. Differential Geom.},
   volume={39},
   date={1994},
   number={2},
   pages={343--357},
   issn={0022-040X},
   review={\MR{1267894}},
}
\bib{ST-3mfld}{article}{
   author={Scharlemann, Martin},
   author={Thompson, Abigail},
   title={Thin position for $3$-manifolds},
   conference={
      title={Geometric topology},
      address={Haifa},
      date={1992},
   },
   book={
      series={Contemp. Math.},
      volume={164},
      publisher={Amer. Math. Soc., Providence, RI},
   },
   date={1994},
   pages={231--238},
   review={\MR{1282766}},
   doi={10.1090/conm/164/01596},
}

\bib{Sch-Proximity}{article}{
   author={Scharlemann, Martin},
   title={Proximity in the curve complex: boundary reduction and
   bicompressible surfaces},
   journal={Pacific J. Math.},
   volume={228},
   date={2006},
   number={2},
   pages={325--348},
   issn={0030-8730},
   review={\MR{2274524}},
   doi={10.2140/pjm.2006.228.325},
}

\bib{SSS}{book}{
   author={Scharlemann, Martin},
   author={Schultens, Jennifer},
   author={Saito, Toshio},
   title={Lecture notes on generalized Heegaard splittings},
   note={Three lectures on low-dimensional topology in Kyoto},
   publisher={World Scientific Publishing Co. Pte. Ltd., Hackensack, NJ},
   date={2016},
   pages={viii+130},
   isbn={978-981-3109-11-7},
   review={\MR{3585907}},
   doi={10.1142/10019},
}

\bib{SchTo}{article}{
   author={Scharlemann, Martin},
   author={Tomova, Maggy},
   title={Alternate Heegaard genus bounds distance},
   journal={Geom. Topol.},
   volume={10},
   date={2006},
   pages={593--617},
   issn={1465-3060},
   review={\MR{2224466}},
   doi={10.2140/gt.2006.10.593},
}

\bib{Schirmer}{article}{
   author={Schirmer, Trent},
   title={New examples of tunnel number subadditivity},
   journal={Topology Appl.},
   volume={160},
   date={2013},
   number={3},
   pages={482--487},
   issn={0166-8641},
   review={\MR{3010354}},
   doi={10.1016/j.topol.2012.12.004},
}

\bib{Schubert}{article}{
   author={Schubert, Horst},
   title={\"{U}ber eine numerische Knoteninvariante},
   language={German},
   journal={Math. Z.},
   volume={61},
   date={1954},
   pages={245--288},
   issn={0025-5874},
   review={\MR{72483}},
   doi={10.1007/BF01181346},
}

\bib{Schultens}{article}{
   author={Schultens, Jennifer},
   title={The classification of Heegaard splittings for (compact orientable
   surface)$\,\times\, S^1$},
   journal={Proc. London Math. Soc. (3)},
   volume={67},
   date={1993},
   number={2},
   pages={425--448},
   issn={0024-6115},
   review={\MR{1226608}},
   doi={10.1112/plms/s3-67.2.425},
}
\bib{TT1}{article}{
   author={Taylor, Scott A.},
   author={Tomova, Maggy},
   title={Thin position for knots, links, and graphs in 3-manifolds},
   journal={Algebr. Geom. Topol.},
   volume={18},
   date={2018},
   number={3},
   pages={1361--1409},
   issn={1472-2747},
   review={\MR{3784008}},
   doi={10.2140/agt.2018.18.1361},
}

\bib{TT2}{article}{
   author={Taylor, Scott},
   author={Tomova, Maggy},
   title={Additive invariants for knots, links and graphs in 3-manifolds},
   journal={Geom. Topol.},
   volume={22},
   date={2018},
   number={6},
   pages={3235--3286},
   issn={1465-3060},
   review={\MR{3858764}},
   doi={10.2140/gt.2018.22.3235},
}

\bib{TT3}{article}{
   author={Taylor, Scott A.},
   author={Tomova, Maggy},
   title={Tunnel number and bridge number of composite genus 2 spatial
   graphs},
   journal={Pacific J. Math.},
   volume={314},
   date={2021},
   number={2},
   pages={451--494},
   issn={0030-8730},
   review={\MR{4337471}},
   doi={10.2140/pjm.2021.314.451},
}

\bib{Thurston}{article}{
title={Geometry and topology of three-manifolds},
author={Thurston, William P.},
publisher={MSRI},
eprint={http://library.msri.org/books/gt3m/}
}

\bib{Tomova}{article}{
   author={Tomova, Maggy},
   title={Thin position for knots in a 3-manifold},
   journal={J. Lond. Math. Soc. (2)},
   volume={80},
   date={2009},
   number={1},
   pages={85--98},
   issn={0024-6107},
   review={\MR{2520379}},
   doi={10.1112/jlms/jdp009},
}

\bib{To-PJM}{article}{
   author={Tomova, Maggy},
   title={Distance of Heegaard splittings of knot complements},
   journal={Pacific J. Math.},
   volume={236},
   date={2008},
   number={1},
   pages={119--138},
   issn={0030-8730},
   review={\MR{2398991}},
   doi={10.2140/pjm.2008.236.119},
}


\bib{Wolcott}{article}{
   author={Wolcott, Keith},
   title={The knotting of theta curves and other graphs in $S^3$},
   conference={
      title={Geometry and topology},
      address={Athens, Ga.},
      date={1985},
   },
   book={
      series={Lecture Notes in Pure and Appl. Math.},
      volume={105},
      publisher={Dekker, New York},
   },
   date={1987},
   pages={325--346},
   review={\MR{873302}},
}

\bib{YL09}{article}{
   author={Yang, Guoqiu},
   author={Lei, Fengchun},
   title={On amalgamations of Heegaard splittings with high distance},
   journal={Proc. Amer. Math. Soc.},
   volume={137},
   date={2009},
   number={2},
   pages={723--731},
   issn={0002-9939},
   review={\MR{2448595}},
   doi={10.1090/S0002-9939-08-09642-1},
}

\bib{Zimmermann92}{article}{
   author={Zimmermann, Bruno},
   title={Finite group actions on handlebodies and equivariant Heegaard
   genus for $3$-manifolds},
   journal={Topology Appl.},
   volume={43},
   date={1992},
   number={3},
   pages={263--274},
   issn={0166-8641},
   review={\MR{1158872}},
   doi={10.1016/0166-8641(92)90161-R},
}
\bib{Zimmermann96}{article}{
   author={Zimmermann, Bruno},
   title={Genus actions of finite groups on $3$-manifolds},
   journal={Michigan Math. J.},
   volume={43},
   date={1996},
   number={3},
   pages={593--610},
   issn={0026-2285},
   review={\MR{1420594}},
   doi={10.1307/mmj/1029005545},
}
\bib{Zimmermann-survey}{article}{
   author={Zimmermann, Bruno},
   title={A survey on large finite group actions on graphs, surfaces and
   3-manifolds},
   journal={Rend. Circ. Mat. Palermo (2)},
   volume={52},
   date={2003},
   number={1},
   pages={47--56},
   issn={0009-725X},
   review={\MR{1970456}},
   doi={10.1007/BF02871923},
}

\end{biblist}
\end{bibdiv}
\end{document}